\theoremstyle{plain}
\newtheorem{theorem}{Theorem}[section]
\newtheorem{lemma}[theorem]{Lemma}
\newtheorem{corollary}[theorem]{Corollary}
\newtheorem{proposition}[theorem]{Proposition}
\theoremstyle{definition}
\newtheorem{definition}[theorem]{Definition}
\newtheorem{assumption}[theorem]{Assumption}
\theoremstyle{remark}
\newtheorem{remark}[theorem]{Remark}
\crefname{equation}{Eq.}{Eqs.}
\title{Coordinated Mean-Field Control for Systemic Risk}
\author{Toshiaki Yamanaka \thanks{Whiting School of Engineering, Johns Hopkins University, Baltimore, MD, USA. Email: \href{mailto:tyamana1@jhu.edu}{tyamana1@jhu.edu}}}
\date{December 4, 2025}
\begin{document}  
\maketitle

\begin{abstract}
We develop a robust linear-quadratic mean-field control framework for systemic risk under model uncertainty, in which a central bank jointly optimizes interest rate policy and supervisory monitoring intensity against adversarial distortions. Our model features multiple policy instruments with interactive dynamics, implemented via a variance weight that depends on the policy rate, generating coupling effects absent in single-instrument models. We establish viscosity solutions for the associated HJB--Isaacs equation, prove uniqueness via comparison principles, and provide verification theorems. The linear-quadratic structure yields explicit feedback controls derived from a coupled Riccati system, preserving analytical tractability despite adversarial uncertainty. Simulations reveal distinct loss-of-control regimes driven by robustness-breakdown and control saturation, alongside a pronounced asymmetry in sensitivity between the mean and variance channels. These findings demonstrate the importance of instrument complementarity in systemic risk modeling and control.
\end{abstract}

\medskip
\noindent\textbf{Keywords}: Mean-field control, linear-quadratic mean-field control, systemic risk, central banking

\medskip
\noindent\textbf{Mathematics Subject Classification}: 49N10, 49N80, 93E20

\tableofcontents

\section{Introduction} \label{sect:intro}
\paragraph{Motivation.}
In this paper, we study systemic risk by integrating insights from robust control and mean-field theory. Modern CBs (central banks) frequently operate as both \emph{liquidity providers} and \emph{prudential supervisors}. However, as detailed below, prior studies in financial mathematics do not incorporate multiple policy measures within a single robust control framework, and our work proposes a unified framework to address this limitation.

While the full model and dynamics are presented in \cref{sect:model}, we begin by briefly introducing our model of multiple CB policy measures. First, CBs control interest rates via a policy rate $u_t$, a common component of banks' funding costs. When $u_t$ rises, maintaining liquidity becomes more expensive system-wide. We capture this by letting the variance weight depend on the policy rate, replacing the constant $w_2$ with $w_2(u_t)$ in the term $w_2(u_t)v_t$, where $w_2(u_t)=\bar w_2+\kappa u_t$, and $\bar w_2 + \kappa u_{\min}>0$ ensures uniform positivity. $\bar w_2$ is the baseline penalty weight on variance.

Second, CBs allocate supervisory resources via a monitoring intensity $\pi_t$. Greater scrutiny mitigates systemic dispersion but entails administrative costs captured by $R \pi_t^2$. Its effect on variance enters the variance dynamics via $-\chi \pi_t$ in $\dot v_t$. Within our robust LQ (linear-quadratic)-MFC (mean-field control) framework with adversarial distortions bounded by relative-entropy (Kullback--Leibler [KL] \cite{KullbackLeibler1951}) divergence, the CB jointly optimizes $u_t$ and $\pi_t$ to minimize a quadratic objective in the cross-sectional moments $(m_t,v_t)$, subject to admissible bounds $u_t\in[u_{\min},u_{\max}]$ and $\pi_t\in[0,\pi_{\max}]$.

This structure aligns with the institutional practice at the Federal Reserve, the ECB (European Central Bank), and the BoE (Bank of England), while preserving analytical tractability through linear dynamics, quadratic costs, and closed-form linear feedback. Both monetary and prudential policies are grounded in law,\footnote{Federal Reserve Act, Sections 2A and 21(4); Statute of the European System of Central Banks and of the European Central Bank, Articles 2, 18, and 25.2; Bank of England Act 1998, Sections 11 and 2A(1).} and our model is consistent with this institutional foundation.

\paragraph{Related literature.}
Financial contagion and systemic risk have been extensively studied in financial mathematics.\footnote{Closely related studies in economics include Freixas, Parigi, and Rochet~\cite{Freixas2000}, who analyze systemic risk in an interbank market and CB liquidity provision. Gai and Kapadia~\cite{Gai2010} develop a model of contagion in financial networks and identify phase transitions.} One line of research focuses on the \emph{network structure} of the financial system. Fouque and Ichiba~\cite{FouqueIchiba2013} proposed a diffusion model of interbank lending that captures how banks' lending preferences can lead to multiple defaults. Cont, Moussa, and Santos~\cite{ContMoussaSantos2013} introduced a metric for the systemic importance of financial institutions—the Contagion Index—to quantify contagion and systemic risk in a network of financial institutions. Amini, Cont, and Minca~\cite{AminiContMinca2016} analyzed distress propagation in large financial networks and established rigorous asymptotic results for the magnitude of contagion. Amini, Filipovi{\'c}, and Minca~\cite{AminiFilipovicMinca2020} studied how clearing all contracts through a central node affects a financial network.

Another major line of research adopts \emph{mean-field} models. The general theory of mean-field systems was pioneered by Lasry and Lions~\cite{LasryLions2007} and by Huang, Malham\'e, and Caines~\cite{Huang2006}. Comprehensive and foundational treatments are given in Bensoussan, Frehse, and Yam~\cite{BensoussanFrehseYam2013}, Carmona~\cite{Carmona2016}, and Carmona and Delarue~\cite{CarmonaDelarue2018I, CarmonaDelarue2018II}.\footnote{Furthermore, linear-quadratic-Gaussian games with one major player interacting with a large number of minor players were analyzed by Huang~\cite{Huang2010}. Mean-field games between a dominating player and representative agents were studied by Bensoussan, Chau, and Yam~\cite{Bensoussan2016}.} Within the context of financial systemic risk, various studies have applied the mean-field framework. Carmona, Fouque, and Sun~\cite{CarmonaFouqueSun2015} proposed an MFG (mean-field game) model of interbank lending and borrowing, formulating the evolution of banks' log-monetary reserves as a system of diffusion processes coupled through their drifts. Bo and Capponi~\cite{BoCapponi2015} developed a mean-field model where banks are subject to sudden shocks affecting their monetary reserves. Sun~\cite{Sun2018} proposed an MFG model with an LQ cost structure, in which the CB acts as a central deposit institution. Hambly and S{\o}jmark~\cite{HamblySojmark2019} introduced a dynamic mean-field model for systemic risk in large financial systems, where the mean-field limit is characterized by a nonlinear SPDE (stochastic PDE). Feinstein and S{\o}jmark~\cite{FeinsteinSojmark2021} proposed a dynamic contagion model with endogenous early defaults for a finite set of banks, reformulated as a stochastic particle system leading to a mean-field problem. Cuchiero, Reisinger, and Rigger~\cite{CuchieroReisingerRigger2024} studied an MFC problem and computed the CB's optimal strategy via a PG (policy gradient) method, where the CB controls the rate of capital injections to distressed institutions in order to limit defaults. Bayraktar, Guo, Tang, and Zhang~\cite{Bayraktaretal2025} studied the problem of capital provision arising from systemic risk in a financial network modeled by SDEs, adopting a mean-field particle system approach.

Furthermore, Minca and Sulem~\cite{MincaSulem2014} formulated an optimization problem for a government with a constrained budget seeking to maximize the total net worth of a financial system of banks and their creditors. Cont, Guo, and Xu~\cite{ContGuoXu2021} analyzed stochastic differential games involving singular controls, motivated by a dynamic model of interbank lending with benchmark rates. Veraart and Aldasoro~\cite{VeraartAldasoro2025} developed a framework for modeling risk and quantifying payment shortfalls in cleared markets with multiple central counterparties.

Comprehensive expositions of stochastic/optimal controls include Yong and Zhou~\cite{YongZhou1999}, Fleming and Soner~\cite{FlemingSoner2006}, Hansen and Sargent~\cite{HansenSargent2008}, Pham~\cite{Pham2009}, and Bensoussan~\cite{Bensoussan2018}. The LQ-MFC problem was considered by Carmona and Delarue~(\cite{CarmonaDelarue2018I}, Subsection 6.7.1). The LQ-MFC framework has been widely studied due to its analytical tractability and broad range of applications (\emph{e.g.}, Basei and Pham~\cite{BaseiPham2019} and Yong~\cite{Yong2013}). In the context of systemic risk, however, applications of the LQ-MFC framework to monetary policy transmission in banking remain largely unexplored. In a recent preprint, De Crescenzo, De Feo, and Pham~\cite{DeCrescenzoDeFeoPham2025} introduced an LQ non-exchangeable MFC problem that generalizes the LQ-MFC framework by incorporating heterogeneous interactions.

\paragraph{Scope and research positioning.}
We focus on the aggregate liquidity management aspect of systemic risk, where a CB coordinates system-wide liquidity through multiple policy instruments. Our framework captures how common shocks and cross-sectional dispersion in liquidity create systemic vulnerabilities that require coordinated policy responses. The mean-reversion term $-\beta(L_t^i-m_t)$ in our model represents interbank adjustment mechanisms, and the variance dynamics capture heterogeneous stress across the banking sector.

\paragraph{Our contributions.}
Our paper makes three primary contributions to the literature.
\begin{enumerate}
\item We integrate robust control against model uncertainty into the LQ-MFC framework for systemic risk, allowing an adversary to distort the drift ($\theta$) and variance dynamics ($\xi$) to capture worst-case model misspecification. We establish viscosity solutions for the resulting HJBI (HJB--Isaacs) equation and prove verification theorems that connect PDE solutions to optimal strategies.
\item Unlike prior studies in financial mathematics, we model the joint optimization of interest rate policy and supervisory monitoring intensity with interactive dynamics via state-dependent variance weight $w_2(u_t) = \bar{w}_2 + \kappa u_t$. This coupling, absent in single-instrument models, captures heterogeneous transmission mechanisms and institutional realities.
\item Our analysis reveals complementarity between interest rate and monitoring policies under model uncertainty. Simulations show phase transitions from controllable to uncontrollable regimes, with asymmetric burdens on monetary versus supervisory tools—a phenomenon critical under model uncertainty.
\end{enumerate}

The LQ structure preserves analytical tractability, yielding explicit Riccati equations and closed-form feedback policies that remain computationally feasible even with control bounds and state constraints. Owing to this analytical tractability, our model provides a tractable baseline that admits various meaningful extensions, as discussed in \cref{sect:discuss}.

\paragraph{Comparison to prior work.} While Sun~\cite{Sun2018} and Cuchiero, Reisinger, and Rigger~\cite{CuchieroReisingerRigger2024} are related to our setting, our framework uniquely captures how a CB's policy rate influences liquidity dispersion through the coupling parameter $\kappa > 0$, providing a direct channel from monetary policy to systemic stability. Unlike De Crescenzo, De Feo, and Pham~\cite{DeCrescenzoDeFeoPham2025}, our approach captures heterogeneous policy transmission through $\kappa > 0$ within an exchangeable framework.

\paragraph{Outline of the paper.}
The remainder of this paper is organized as follows. \Cref{sect:model} introduces the LQ-MFC framework with multiple policy instruments and robust control against adversarial distortions. \Cref{sect:theor} establishes the theoretical foundations, including viscosity characterization, verification theorems, and the quadratic ansatz with its associated Riccati system. \Cref{sect:simul} presents a comprehensive numerical analysis examining adversary strength, parameter sensitivity, and loss-of-control regimes. \Cref{sect:discuss} discusses limitations and extensions, and \cref{sect:concl} concludes. Technical proofs and propagation-of-chaos analysis are provided in the appendices.

\section{Model and dynamics} \label{sect:model}
\paragraph{Model setting.}
Let $L_t^{i}$ denote the liquidity gap of bank $i \in [0,1]$, a continuum of agents. The CB chooses a policy rate $u_t$ and a monitoring intensity $\pi_t$ from admissible sets $u_t \in \mathcal U := [u_{\min}, u_{\max}]$ and $\pi_t \in \mathcal P := [0, \pi_{\max}]$. Controls $(u,\pi)$ and adversarial distortions $(\theta,\xi)$ are progressively measurable and square-integrable. Denote the cross-sectional mean and variance by $m_t := \mathbb{E}[L_t^{i}]$ and $v_t := \mathrm{Var}[L_t^{i}]$. Individual dynamics follow a linear McKean--Vlasov SDE with common and idiosyncratic Brownian motions $(B_t, W_t^{i})$:
\[dL_t^{i} = \bigl[-\beta\,(L_t^{i}-m_t) + \eta\,u_t + \theta_t\bigr]dt
 \, + \, \sigma_L\,dW_t^{i} \, + \, \sigma_c\,dB_t,\]
where $\beta>0$ captures interbank netting effects and $\eta>0$ is the pass-through from $u_t$ to funding costs. The term $\theta_t$ is a worst-case drift distortion chosen by an adversary.

\begin{remark}[mean reversion mechanism]
The parameter $\beta > 0$ in the drift term $-\beta(L_t^i - m_t)$ captures the mean-reverting nature of liquidity dynamics via interbank netting and clearing mechanisms. This term generates a process in which deviations from the mean $m_t$ decay at rate $\beta$. When $L_t^i > m_t$ (excess liquidity), the negative drift pulls the bank's position downward, while $L_t^i < m_t$ (liquidity shortage) induces an upward drift. A higher $\beta$ represents more efficient interbank markets with faster redistribution of liquidity imbalances, while a lower $\beta$ reflects frictions in interbank interactions.
\end{remark}

To derive the aggregate dynamics, we take expectations of $dL_t^i$, which yields $\dot{m}_t$ because the linear mean-reversion terms cancel at the aggregate level. The variance dynamics follow from the model specification:
\[\dot m_t \;=\; \eta\,u_t + \theta_t, \quad \dot v_t \;=\; -2\beta\,v_t + \sigma_L^2 + \sigma_c^2 + \xi_t - \chi\,\pi_t,\]
where $\sigma_L^2 + \sigma_c^2$ is the effective variance forcing (Remark~\ref{rema:variance}), $\xi_t$ is a worst-case dispersion distortion, and $\chi>0$ measures the effectiveness of monitoring on variance.

\begin{remark}[model specification]
We adopt a specification $\dot m_t \;=\; \eta\,u_t + \theta_t$ for the mean dynamics. This modeling choice preserves the LQ structure necessary for deriving closed-form solutions. It allows us to separate and identify two distinct channels of monetary policy transmission: the direct effect on mean liquidity through $\eta$ and the heterogeneous effect on liquidity dispersion through the variance penalty coupling $w_2(u_t) = \bar{w}_2 + \kappa u_t$. While richer dynamics may be appealing, they may obscure these transmission mechanisms and sacrifice analytical tractability.
\end{remark}

\begin{remark}[variance dynamics and common noise]\label{rema:variance}
While the underlying system includes common noise $\sigma_c dB_t$, standard aggregation implies that this term cancels in the cross-sectional dynamics $d(L_t^i - m_t)$, leaving only idiosyncratic volatility $\sigma_L$ in the variance drift. We retain $\sigma_c^2$ in the variance dynamics $\dot v_t$ as a conservative modeling choice that accounts for potential additional dispersion channels, such as heterogeneous bank sensitivities to common shocks, within the tractable LQ framework.
\end{remark}

Under the parameter restriction $\sigma_L^2 + \sigma_c^2 \geq \chi\pi_{\max}$, the variance remains non-negative. This condition suffices because the optimal adversarial distortion $\xi_t^* = 2\lambda_v \partial_v V \geq 0$ increases variance, so the binding constraint for non-negativity at $v=0$ is maximum monitoring with no adversarial pressure. If $v_t$ reaches zero, the drift $\sigma_L^2 + \sigma_c^2 + \xi_t - \chi\pi_t \geq 0$ ensures it cannot become negative. The common noise $B_t$ implies a conditional McKean--Vlasov limit, where propagation-of-chaos and limit statements are understood conditional on the common filtration (see Appendix~\ref{appe:A}). While we solve the problem at the level of deterministic moment dynamics $(m_t, v_t)$, Appendix~\ref{appe:A} shows that, as $N \to \infty$, the empirical mean and variance of the stochastic $N$-bank system converge to $(m_t, v_t)$.

The CB's objective is to stabilize the system by minimizing the aggregate liquidity gap and its dispersion while limiting control efforts. Specifically, the CB minimizes a quadratic mean-field objective composed of running penalties on the squared mean gap $m_t^2$, the cross-sectional variance $v_t$, and quadratic costs associated with monitoring $\pi_t^2$, and policy rate adjustments $u_t^2$. We include a terminal cost $G_m m_T^2 + G_v v_T$ to ensure the system is steered toward stability by the terminal time $T$, penalizing any remaining aggregate imbalance $m_T$ or dispersion $v_T$. To capture the interaction between monetary policy and variance $v_t$, we introduce a variance weight $w_2(u_t)$ that depends on the policy rate:
\[J(u,\pi) =\int_0^T \Big( w_1m_t^2+w_2(u_t)v_t+R\pi_t^2+R_uu_t^2 \Big)dt+G_mm_T^2+G_vv_T, \quad w_2(u_t)=\bar w_2 + \kappa u_t,\]
with $\bar w_2>0$ and $\kappa>0$. We assume $R_u>0$, $R>0$, and $\bar w_2 + \kappa\,u_{\min} > 0$ so that $w_2(u_t)$ is uniformly positive on $\mathcal U$.

\begin{remark}[terminal variance and cost structure]\label{rema:terminal}
Under the linear terminal penalty $G_v v_T$, the variance $v_T$ settles at a non-zero optimal equilibrium where the marginal cost of further variance reduction equals its marginal benefit against adversarial pressure $\xi$ and system noise $\sigma_L^2+\sigma_c^2$. We adopt a linear penalty on $v_T$ because variance already represents the second moment of liquidity gaps ($v_t = \mathbb{E}[(L_t^i - m_t)^2]$, the cross-sectional variance). Thus, a linear penalty on $v_t$ constitutes a quadratic penalty on the underlying bank positions, preserving the LQ structure. A quadratic penalty $G_v v_T^2$ would penalize fourth moments, yielding vanishing marginal incentives near zero ($\partial_v(v^2) = 2v \to 0$). Our numerical experiments confirm that this quadratic penalty leads to higher terminal variance $v_T$ due to these weakened control incentives.
\end{remark}

\begin{remark}[motivation for $\kappa > 0$]
The coupling term $w_2(u_t) = \bar w_2 + \kappa u_t$ captures the state-dependent nature of variance penalties, reflecting that monetary tightening affects banks heterogeneously based on their liquidity positions. In a general control setting, the cost of cross-sectional dispersion may depend on the aggregate policy stance, represented by a general weight function $\mathcal{W}(u_t)$. We assume $\mathcal{W}$ is smooth and admits a Taylor expansion around the neutral rate $u^*=0$: $\mathcal{W}(u_t) \approx \mathcal{W}(0) + \mathcal{W}'(0) u_t + \mathcal{O}(u_t^2).$ Identifying $\bar{w}_2 = \mathcal{W}(0)$ and $\kappa = \mathcal{W}'(0)$, our specification represents the first-order truncation of this general dependency. The assumption $\kappa > 0$ implies that the marginal cost of dispersion increases with the policy rate (a tightening regime amplifies the penalty on heterogeneity). Retaining only the linear term preserves the LQ structure of the problem, allowing for the explicit Riccati characterization derived in \cref{sect:quadratic}. Including quadratic or higher-order terms in the weight would render the Hamiltonian non-quadratic in $u$, destroying the Riccati property and requiring numerical methods that obscure the analytical characterization of optimal policy.
\end{remark}

\paragraph{Robustness.}
Robustness is imposed via a relative-entropy budget on the adversary. Nature selects $(\theta_t,\xi_t)$ subject to:
\begin{equation}\label{eq:robust}
\int_0^T \left( \frac{\theta_t^2}{\lambda_m} + \frac{\xi_t^2}{\lambda_v} \right) dt \;\le\; \rho,
\end{equation}
with $\lambda_m,\lambda_v>0$ and budget $\rho>0$. By convex duality, this constrained formulation is equivalent to a penalized (Lagrangian) formulation in which the running cost includes terms $-\frac{\theta_t^2}{4\lambda_m} - \frac{\xi_t^2}{4\lambda_v}$, with $\lambda_m$ and $\lambda_v$ acting as Lagrange multipliers. We adopt this penalized formulation henceforth. The HJBI equation includes convex penalties that bound worst-case distortions and ensure well-posed Riccati equations. These relative-entropy penalties yield bounded linear worst-case feedback and ensure the Isaacs condition.

\paragraph{Value function and Riccati system.}
The coupling term $\kappa u_t v_t$ in the cost functional (through $w_2(u_t) = \bar{w}_2 + \kappa u_t$) necessitates a full quadratic ansatz in both $m$ and $v$, including cross-terms. With terminal cost $G_m m_T^2 + G_v v_T$, we seek a value function of the form given in \cref{eq:ansatz}, and the HJBI reduces to a coupled system of Riccati ODEs in \cref{eq:Riccati}.

\paragraph{Feedback laws.}
We write $\partial_m V$ and $\partial_v V$ for the partial derivatives of $V$ with respect to $m$ and $v$, respectively. Solving the HJBI (derived via the Isaacs Hamiltonian in \cref{sect:HJBI}) yields linear feedback laws, implemented with projection onto admissible sets. Specifically, minimizing the Isaacs Hamiltonian $H(x, \nabla V)$ with respect to the controls $u$ and $\pi$ (before projection) yields the first-order conditions $2R_u u + (\eta \partial_m V + \kappa v) = 0$ and $2R \pi - \chi \partial_v V = 0.$ Solving these yields the linear feedback laws:
\[u_t^{\mathrm{fb}}=-\dfrac{\kappa v_t+\eta \partial_m V}{2R_u}, \qquad \pi_t^{\mathrm{fb}} \;=\; \frac{\chi\,\partial_v V}{2R}.\]
The projected controls are:
\[u_t^{*} \;=\; \Pi_{[u_{\min},u_{\max}]}\!\bigl(u_t^{\mathrm{fb}}\bigr), \qquad \pi_t^{*} \;=\; \Pi_{[0,\pi_{\max}]}\!\bigl(\pi_t^{\mathrm{fb}}\bigr).\]
Worst-case distortions are linear in value gradients and bounded:
\[\theta_t^{*} \;=\; 2\,\lambda_m\,\partial_m V, \qquad \xi_t^{*} \;=\; 2\,\lambda_v\,\partial_v V.\]

Comparative statics are piecewise-smooth with potential kinks at projection boundaries. For simulation, we restrict parameters to the region where the Riccati system admits a bounded solution.

\begin{remark}
We adopt the penalized robust-control formulation, which is dual to a relative-entropy budget and implies that the Isaacs condition holds under our convexity/concavity assumptions. Parameters satisfy $R>0$, $R_u>0$, and $\min_{u\in[u_{\min},u_{\max}]}(\bar w_2+\kappa u)>0$, so the variance weight is uniformly positive. Interior feedback is implemented via projection onto admissible sets, and comparative statics are evaluated away from switching times.
\end{remark}

\section{Theoretical foundations}\label{sect:theor}
This section develops a framework linking the control formulation to solution concepts and implementable policies for robust mean-field models. We establish the DPP (dynamic programming principle, see, \emph{e.g.,} Fleming and Soner~\cite{FlemingSoner2006}) and its associated HJBI equation. We then prove existence and uniqueness of viscosity solutions under regularity assumptions. A comparison principle yields uniqueness, and a verification theorem translates PDE solutions into optimal strategies, together providing a well-posed and operational framework for robust control.

Building on the general theory, we specialize to a robust LQ-MFC setting. Via square completion, we obtain explicit Isaacs and saddle-point structures and a quadratic value function, verified by a Riccati system. We derive closed-form feedback policies and conclude with comparative statics and robustness loss bounds that quantify sensitivity to parameters and misspecification (the details are provided in Appendix~\ref{appe:C}). Collectively, these results deliver both the theoretical guarantees and practical tools needed to analyze and implement robust policies in large-scale mean-field environments.

\subsection{Viscosity solutions and the HJBI equation}\label{sect:viscosity_and_HJBI}
In this subsection, we establish the viscosity characterization of the robust HJBI equation. 

\subsubsection{Model primitives and admissible inputs}\label{sec:model_primitives}
As introduced in \cref{sect:model}, let $X_t=(m_t,v_t)\in \mathbb{R}\times \mathbb{R}_+$ denote the moment state, with absolutely continuous dynamics $\dot X_t = b(X_t, u_t, \pi_t, \theta_t, \xi_t)$, where the drift function $b: (\mathbb{R} \times \mathbb{R}_+) \times \mathcal{U} \times \mathcal{P} \times \mathbb{R}^2 \to \mathbb{R}^2$ is defined by $b(x,u,\pi,\theta,\xi) := \big(\eta u + \theta,\; -2\beta v + \sigma_L^2 + \sigma_c^2 + \xi - \chi\pi\big),$ where $x = (m,v)$, $u\in \mathcal U=[u_{\min},u_{\max}]$, and $\pi\in \mathcal P=[0,\pi_{\max}]$ are progressively measurable controls, while $(\theta,\xi)$ are progressively measurable distortions.

The penalized running cost function is:
\[ \ell(m,v,u,\pi,\theta,\xi) := w_1 m^2 + w_2(u)\,v + R\,\pi^2 + R_u\,u^2 - \tfrac{\theta^2}{4\lambda_m} - \tfrac{\xi^2}{4\lambda_v},\]
with terminal cost function $g(x):= G_m m^2 + G_v v$, where $x = (m,v)$ and $w_2(u)=\bar w_2 + \kappa u$.

\begin{assumption}[standing assumptions] \label{assu:A}
We assume the following.

\begin{enumerate}
\item $\mathcal U,\mathcal P$ are compact intervals. The processes $u_t, \pi_t, \theta_t,$ and $\xi_t$ are progressively measurable and square-integrable.
\item For any admissible inputs $(u_t, \pi_t, \theta_t, \xi_t)$, the controlled ODE $\dot X_s = b\left(X_s, u_s, \pi_s, \theta_s, \xi_s\right)$ with initial condition $X_t = x$ admits a unique absolutely continuous solution on $[t, T]$. Moreover, the solution has at most linear growth in the state.
\item The functions $\ell$ and $g$ are continuous in all their arguments. The running cost function $\ell$ is convex in $(u,\pi)$ and concave in $(\theta,\xi)$. The map $w_2$ is continuous in $u$.
\item The admissible inputs $(u_t, \pi_t, \theta_t, \xi_t)$ are closed under concatenation at stopping times, and the cost functional is additive over time, ensuring the DPP.
\end{enumerate}
\end{assumption}

Based on the framework of Petersen, James, and Dupuis~\cite{Petersen2000}, we adopt the following formulation.

\begin{definition}[lower and upper values]
For $(t,x) \in [0,T] \times (\mathbb{R} \times \mathbb{R}_+)$, define the lower value
\[V(t,x) := \inf_{(u,\pi)} \sup_{(\theta,\xi)}\!\left[\int_t^T \ell\!\left(X_s, u_s, \pi_s, \theta_s, \xi_s\right)\, ds \;+\; g\!\left(X_T\right)\right],\]
and the upper value
\[\widehat V(t,x) := \sup_{(\theta,\xi)} \inf_{(u,\pi)}\!\left[\int_t^T \ell\!\left(X_s, u_s, \pi_s, \theta_s, \xi_s\right)\, ds \;+\; g\!\left(X_T\right)\right],\]
where the infimum and supremum are taken over admissible inputs.
\end{definition}

\begin{proposition}[DPP and terminal condition]\label{prop:DPP}
Under Assumption~\ref{assu:A}, both the lower value $V$ and the upper value $\widehat V$ satisfy the DPP, and $V(T,x) = \widehat V(T,x) = g(x), \quad x \in \mathbb{R} \times \mathbb{R}_+.$
\end{proposition}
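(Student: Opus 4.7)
The terminal condition is immediate by direct evaluation: at $t=T$, the integral $\int_T^T \ell\, ds$ vanishes and the functional collapses to $g(X_T) = g(x)$ regardless of the inputs, so $V(T,x) = \widehat V(T,x) = g(x)$. For the DPP itself, I would establish both inequalities separately for the lower value $V$ and then observe that the symmetric argument, with the order of inf and sup swapped, handles the upper value $\widehat V$ identically (both games share the same dynamics $b$, running cost $\ell$, and terminal cost $g$, and the concatenation apparatus in Assumption~\ref{assu:A}(4) is symmetric between players).

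Fix $(t,x)$ and a stopping time $\tau \in [t,T]$, and define
\[\mathcal{B}(t,x) \;:=\; \inf_{(u,\pi)} \sup_{(\theta,\xi)} \left\{\int_t^\tau \ell(X_s, u_s, \pi_s, \theta_s, \xi_s)\, ds \;+\; V(\tau, X_\tau)\right\}.\]
For $V(t,x) \leq \mathcal{B}(t,x)$: given $\varepsilon>0$, choose $(u^{(1)},\pi^{(1)})$ admissible on $[t,\tau]$ that is $\varepsilon$-optimal in $\mathcal{B}(t,x)$; for each terminal state $X_\tau$, select an $\varepsilon$-optimal continuation $(u^{(2)},\pi^{(2)})$ for $V(\tau, X_\tau)$ on $[\tau,T]$. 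Concatenating at $\tau$ produces a single admissible input over $[t,T]$ by Assumption~\ref{assu:A}(4), and since the adversary's sup is bounded above by the sup over the concatenated inputs, splitting the cost via additivity (also Assumption~\ref{assu:A}(4)) yields $V(t,x) \leq \mathcal{B}(t,x) + 2\varepsilon$. For the reverse inequality $V(t,x) \geq \mathcal{B}(t,x)$: take $(u,\pi)$ globally $\varepsilon$-optimal for $V(t,x)$ and let the adversary play optimally over $[t,T]$. Restricting to $[\tau,T]$, the inner sup over $(\theta,\xi)$ is at least $V(\tau, X_\tau)$ by definition, so splitting the integral at $\tau$ and taking the outer inf gives $V(t,x) + \varepsilon \geq \mathcal{B}(t,x)$. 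Letting $\varepsilon \downarrow 0$ closes the DPP for $V$, and the analogous argument with the order of players swapped closes it for $\widehat V$.

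\textbf{Main obstacle.} The delicate step is the measurable concatenation: the continuation $(u^{(2)},\pi^{(2)})$ must depend measurably on $X_\tau$ in order to produce a genuine progressively measurable input on $[t,T]$. This requires a Filippov-type measurable-selection argument, which is available here because $\mathcal U,\mathcal P$ are compact, $\ell$ and $g$ are continuous, and the ODE satisfies the well-posedness and linear-growth conditions of Assumption~\ref{assu:A}(2), so the $\varepsilon$-optimal continuation can be chosen as a measurable function of the endpoint. A related subtlety is that in adversarial problems the naive open-loop inf--sup need not equal the strategy-based value, but here the concavity of $\ell$ in $(\theta,\xi)$ and convexity in $(u,\pi)$ from Assumption~\ref{assu:A}(3), together with the entropic penalty that makes each player's best response an explicit feedback of the value gradient, ensure that the open-loop and Elliott--Kalton strategy formulations coincide, so the DPP obtained above is the one used in the subsequent viscosity characterization.
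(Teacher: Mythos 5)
Your proposal is correct in substance, but it takes a genuinely different (and more explicit) route than the paper, which offers no proof of this proposition at all: Assumption~\ref{assu:A}(4) is phrased precisely so that the DPP is built into the hypotheses (``closed under concatenation at stopping times, and the cost functional is additive over time, ensuring the DPP''), and the terminal condition is treated as immediate. You instead reconstruct the standard two-inequality argument with $\varepsilon$-optimal controls, concatenation at $\tau$, and a measurable-selection step, which is the honest content hiding behind that assumption. Two caveats on your filling-in. First, the measurable-selection obstacle you flag is real and your resolution (compactness of $\mathcal U,\mathcal P$, continuity of $\ell,g$, well-posed ODE with linear growth) is the right toolkit, but note that the continuation must be measurable not merely in the endpoint $X_\tau$ but in the adversary's realized play on $[t,\tau]$, since $X_\tau$ is adversary-dependent; in the deterministic-ODE setting this reduces to measurability in $X_\tau$ as you say, so the gap is closable. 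Second, your appeal to the coincidence of open-loop and Elliott--Kalton values via convexity/concavity and the entropic penalty anticipates the Isaacs/saddle-point structure that the paper only establishes later (Theorem~\ref{theo:saddle}); to avoid circularity one should either prove the DPP directly in the strategy formulation (the Petersen--James--Dupuis framework the paper cites) or, as the paper effectively does, take the concatenation stability of strategies as a standing hypothesis. What your approach buys is transparency about where the assumption is actually doing work; what the paper's approach buys is brevity, at the cost of making Proposition~\ref{prop:DPP} nearly tautological.
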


\subsubsection{HJBI and viscosity characterization}\label{sect:HJBI}
\begin{definition}[Isaacs Hamiltonian]
Let $x=(m,v)$ denote the state and $p=(p_m,p_v)\in\mathbb{R}^2$ denote the adjoint variables. Define
\begin{equation}\label{eq:IsaacsH_1}
H(x,p) := \inf_{u\in \mathcal U,\ \pi\in \mathcal P}\ \sup_{\theta,\xi\in \mathbb{R}} \Big\{ p_m(\eta u+\theta) + p_v\big(-2\beta v + \sigma_L^2+\sigma_c^2+\xi-\chi\pi\big) + \ell(m,v,u,\pi,\theta,\xi) \Big\}.
\end{equation}
Direct maximization in $(\theta,\xi)$ yields
\[\sup_{\theta,\xi\in\mathbb{R}} \Big\{ p_m\theta + p_v\xi - \tfrac{\theta^2}{4\lambda_m} - \tfrac{\xi^2}{4\lambda_v} \Big\} \;=\; \lambda_m p_m^2 + \lambda_v p_v^2,\]
such that
\begin{equation}\label{eq:IsaacsH}
\begin{aligned}
H(x,p) = &\lambda_m p_m^2 + \lambda_v p_v^2 \\ & + \inf_{u\in\mathcal{U},\pi\in\mathcal{P}} \Big\{ w_1 m^2 + w_2(u) v + R\pi^2 + R_u u^2 + p_m \eta u + p_v(-2\beta v + \sigma_L^2+\sigma_c^2-\chi\pi)\Big\}.
\end{aligned}
\end{equation}
\end{definition}

\begin{proposition}[HJBI for the value function]\label{prop:HJBI}
Under Assumption~\ref{assu:A} and the Isaacs condition (see Lions~\cite{Lions1983} and Fleming and Soner~\cite{FlemingSoner2006}, Eq. XI(3.11)), the lower value $V$ is a viscosity solution of the HJBI equation
\begin{equation}\label{eq:HJBI}
-\partial_t V(t,x) + H\!\big(x,\nabla_x V(t,x)\big) = 0, \quad V(T,x)=g(x).
\end{equation}
Moreover, the upper value $\widehat V$ is also a viscosity solution of
\[-\partial_t \widehat V(t,x) + H\!\big(x,\nabla_x \widehat V(t,x)\big) = 0, \quad \widehat V(T,x)=g(x).\]
\end{proposition}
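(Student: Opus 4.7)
The plan is to combine the DPP from \cref{prop:DPP} with the classical Crandall--Lions test-function argument, then invoke the Isaacs condition to conclude that the lower and upper values satisfy the same HJBI equation. Before doing so, I would first verify that $V$ and $\widehat V$ are continuous (hence admissible as viscosity solutions): the linear growth of $b$ in Assumption~\ref{assu:A}(ii) yields Gronwall estimates for $X$ that depend continuously on $(t,x)$, and the penalization $-\theta^2/(4\lambda_m)-\xi^2/(4\lambda_v)$ forces the adversarial supremum to be quadratic in $\nabla V$ and hence finite, so standard stability estimates for the cost functional provide continuity.

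For the sub-solution property of $V$, fix $(t_0,x_0)\in[0,T)\times(\mathbb R\times\mathbb R_+)$ and a test function $\varphi\in C^1$ such that $V-\varphi$ has a local maximum at $(t_0,x_0)$ with equality. Given $\varepsilon>0$, choose constant controls $(\bar u,\bar \pi)\in\mathcal U\times\mathcal P$ that are $\varepsilon$-optimal in the outer infimum of $H(x_0,\nabla_x\varphi(t_0,x_0))$. Applying the suboptimal direction of the DPP with these frozen controls on $[t_0,t_0+h]$ and replacing $V\leq\varphi$ on the right-hand side yields
\[\varphi(t_0,x_0)\;\leq\;\sup_{(\theta,\xi)}\Big\{\int_{t_0}^{t_0+h}\!\ell(X_s,\bar u,\bar\pi,\theta_s,\xi_s)\,ds \;+\; \varphi(t_0+h,X_{t_0+h})\Big\}.\]
Taylor-expanding $\varphi$ along the trajectory, performing the pointwise maximization in $(\theta,\xi)$ (which is explicit and attained by the linear feedback $\theta^{*}=2\lambda_m\partial_m\varphi$, $\xi^{*}=2\lambda_v\partial_v\varphi$ thanks to strict concavity of the penalty), dividing by $h$, and sending $h\downarrow 0$ then $\varepsilon\downarrow 0$ gives $-\partial_t\varphi(t_0,x_0)+H(x_0,\nabla_x\varphi(t_0,x_0))\leq 0$. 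The super-solution inequality is obtained symmetrically: at a local minimum of $V-\varphi$, use the other direction of the DPP, plug in the explicit adversarial maximizers (no Elliott--Kalton strategy is needed because the penalized supremum is attained by a Borel feedback that can be frozen on $[t_0,t_0+h]$), select $\varepsilon$-optimal $(u,\pi)$, Taylor-expand, and let $h\downarrow 0$.

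The same argument applied with the roles of $(u,\pi)$ and $(\theta,\xi)$ interchanged shows that $\widehat V$ is a viscosity solution of the PDE with the upper Hamiltonian $H^{+}(x,p):=\sup_{(\theta,\xi)}\inf_{(u,\pi)}\{p\!\cdot\! b+\ell\}$. The Isaacs condition $H^{-}=H^{+}=H$ holds by inspection: the integrand in \cref{eq:IsaacsH_1} is additively separable between $(u,\pi)$ and $(\theta,\xi)$ (no cross-terms), convex in $(u,\pi)$ and concave in $(\theta,\xi)$ by Assumption~\ref{assu:A}(iii), so the inf--sup and sup--inf coincide pointwise in $(x,p)$. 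Hence both $V$ and $\widehat V$ solve the same HJBI equation, and the terminal condition is inherited directly from \cref{prop:DPP}.

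The main obstacle is handling the state constraint $v\geq 0$ in the super-solution step at boundary points $v_0=0$: there the test-function argument must be restricted to trajectories that remain in $\mathbb R\times\mathbb R_+$. This is where the standing parameter restriction $\sigma_L^2+\sigma_c^2\geq \chi\pi_{\max}$ matters; together with the non-negativity of the optimal $\xi^{*}=2\lambda_v\partial_v V\geq 0$, the $v$-drift at $v=0$ is non-negative for every admissible input, so the ODE does not exit the half-space and the interior argument applies verbatim. A secondary technical point is justifying the interchange of $\inf$ and $\sup$ in the Isaacs step when $\mathcal U,\mathcal P$ are compact but the distortion space is $\mathbb R^2$; the penalization makes the supremum coercive, and the separability observed above makes the exchange trivial rather than requiring a minimax theorem.
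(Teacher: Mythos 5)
Your proposal follows essentially the same route as the paper: the paper's own argument (given for the closely related Theorem~\ref{theo:visc} in Appendix~\ref{proo:visc}) is exactly the DPP-plus-test-function scheme you describe, with Taylor expansion of $\varphi$ along the absolutely continuous trajectory, explicit maximization in $(\theta,\xi)$ via the quadratic KL penalty, and the Isaacs condition to put $V$ and $\widehat V$ on the same equation. Your additions are sensible refinements rather than a different method: checking continuity of the values up front, selecting $\varepsilon$-optimal frozen controls explicitly, and observing that Isaacs holds pointwise because the integrand in \cref{eq:IsaacsH_1} is additively separable in $(u,\pi)$ versus $(\theta,\xi)$ — this last point is actually a cleaner justification than the paper's appeal to Sion's minimax theorem in \Cref{theo:saddle}, since separability makes the inf--sup exchange immediate.

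One sub-claim is wrong as stated and should be repaired: you assert that at $v_0=0$ ``the $v$-drift is non-negative for every admissible input'' because $\sigma_L^2+\sigma_c^2\ge\chi\pi_{\max}$ and $\xi^{*}\ge 0$. But the adversary's distortion ranges over all of $\mathbb{R}$, so an admissible (if grossly suboptimal) choice $\xi_s\ll 0$ makes the drift negative and drives the trajectory out of $\mathbb{R}\times\mathbb{R}_+$; the non-negativity of $\xi^{*}=2\lambda_v\partial_v V$ only controls the \emph{optimal} feedback, not every input appearing inside the $\sup$. Consequently ``the interior argument applies verbatim'' does not follow from invariance of the half-space. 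The correct fix — which the paper itself adopts in Assumption~\ref{assu:B}(4) and \cref{sect:comparison} — is to work with constrained viscosity solutions in the sense of Soner on the closed set $\mathbb{R}\times\mathbb{R}_+$, testing the supersolution inequality at $v=0$ with constrained semijets rather than relying on forward invariance of the state constraint under arbitrary adversarial inputs. With that substitution your argument is complete and matches the paper's.
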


\begin{theorem}[viscosity characterization of the robust HJBI]\label{theo:visc}
Under Assumption~\ref{assu:A} and the DPP for $V$ and $\widehat V$, the following hold:
\begin{enumerate}
\item $V$ is a viscosity supersolution of $-\partial_t \phi + H(x,\nabla_x \phi)=0$ on $[0,T)\times(\mathbb{R}\times\mathbb{R}_+)$, bounded from below with at most polynomial growth, and satisfies $V(T,\cdot)=g(\cdot)$.
\item $\widehat V$ is a viscosity subsolution of the same equation with $\widehat V(T,\cdot)=g(\cdot)$.
\item If Isaacs' condition holds (by convexity in $(u,\pi)$, concavity in $(\theta,\xi)$, and compactness), then $V=\widehat V$ and the common value is a viscosity solution of the HJBI.
\end{enumerate}
\end{theorem}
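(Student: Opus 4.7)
My plan is to derive the one-sided viscosity inequalities directly from the DPP in Proposition~\ref{prop:DPP} using the standard test-function technique adapted to the Isaacs setting, and then to identify $V$ with $\widehat V$ through the convexity/concavity hypotheses of Assumption~\ref{assu:A}(3). The terminal data $V(T,\cdot)=\widehat V(T,\cdot)=g(\cdot)$ are already provided by Proposition~\ref{prop:DPP}; polynomial growth of both values follows from a Gronwall estimate on $\dot X_s = b(X_s,\cdot)$ together with compactness of $\mathcal U\times\mathcal P$ and the coercivity supplied by the penalty terms $-\theta^2/(4\lambda_m)-\xi^2/(4\lambda_v)$, which effectively restrict $(\theta,\xi)$ to a bounded range determined by $\nabla_x V$.

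For the supersolution property of $V$, I would fix $(t_0,x_0)\in[0,T)\times(\mathbb{R}\times\mathbb{R}_+)$ and a test function $\phi\in C^1$ such that $V-\phi$ attains a local minimum at $(t_0,x_0)$, normalized by $V(t_0,x_0)=\phi(t_0,x_0)$. Applying Proposition~\ref{prop:DPP} with arbitrary constant admissible controls $(u,\pi)$ and the sup over $(\theta,\xi)$, then using $V\ge\phi$ near $(t_0,x_0)$, yields
\[\phi(t_0,x_0)\;\le\;\sup_{(\theta,\xi)}\!\left[\int_{t_0}^{t_0+h}\!\ell(X_s,u,\pi,\theta_s,\xi_s)\,ds + \phi(t_0+h,X_{t_0+h})\right].\]
Taylor expanding $\phi$, dividing by $h$, and interchanging $\lim_{h\downarrow 0}$ with the sup (justified by the quadratic penalty confining the optimal $(\theta,\xi)$ to a compact set) gives $-\partial_t\phi(t_0,x_0)+\sup_{\theta,\xi}\{\nabla_x\phi\cdot b+\ell\}\ge 0$ for every fixed $(u,\pi)$. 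Taking the infimum over $(u,\pi)$ then yields exactly $-\partial_t\phi+H(x_0,\nabla_x\phi)\ge 0$. A symmetric argument applied to a test function $\psi$ with $\widehat V-\psi$ attaining a local maximum, using the DPP for $\widehat V$ with arbitrary constant distortions and the inf over $(u,\pi)$, produces $-\partial_t\psi+H(x_0,\nabla_x\psi)\le 0$.

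For the identification, I would invoke Assumption~\ref{assu:A}(3): $\ell$ is convex in $(u,\pi)$ and concave in $(\theta,\xi)$, and $b$ is affine in all inputs, so the integrand in \cref{eq:IsaacsH_1} is a convex--concave saddle function on $\mathcal U\times\mathcal P\times\mathbb{R}^2$. Combined with compactness of $\mathcal U\times\mathcal P$ and coercivity in $(\theta,\xi)$ furnished by the penalty, Sion's minimax theorem (or direct verification using the explicit quadratic maximizers $\theta^*=2\lambda_m p_m$, $\xi^*=2\lambda_v p_v$) gives $\inf_{u,\pi}\sup_{\theta,\xi}=\sup_{\theta,\xi}\inf_{u,\pi}$ pointwise in $(x,p)$, which is exactly the Isaacs condition. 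Hence $V$ and $\widehat V$ are viscosity super- and subsolutions of the \emph{same} equation with identical terminal data. Combined with the comparison principle announced in the opening of \cref{sect:theor}, this forces $V\le\widehat V$; the reverse inequality $V\ge\widehat V$ is immediate from the definitions (sup--inf dominates inf--sup), so $V=\widehat V$ and both coincide with the unique viscosity solution.

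The main obstacle I anticipate is the limit passage inside the unbounded sup/inf in the DPP step, since $(\theta,\xi)$ are only required to be square-integrable. The quadratic penalties are indispensable: they render the $(\theta,\xi)$-Legendre transform equal to $\lambda_m p_m^2+\lambda_v p_v^2$, a smooth coercive function of $p$, so the pointwise maximizers depend Lipschitz-continuously on the state trajectory and sit in a ball whose radius is controlled by $|\nabla_x\phi|$ on a neighborhood of $(t_0,x_0)$. This permits an a priori truncation of $(\theta,\xi)$ to a compact set before sending $h\downarrow 0$, which is what legitimizes the interchange of limit and supremum. A secondary point is the state constraint $v\ge 0$; however, the condition $\sigma_L^2+\sigma_c^2\ge\chi\pi_{\max}$ in \cref{sect:model}, together with $\xi^*\ge 0$, keeps the trajectory inside $\mathbb{R}\times\mathbb{R}_+$, so no degenerate boundary treatment is required.
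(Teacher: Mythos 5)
Your overall strategy---one-sided viscosity inequalities extracted from the DPP via test functions and a Taylor expansion, followed by Isaacs' condition and comparison for part 3---is the same template the paper uses in Appendix~\ref{proo:visc}, and your truncation of the unbounded $(\theta,\xi)$ via the quadratic penalty is a sensible way to justify the limit--supremum interchange. However, the supersolution step is miswired. Fixing an arbitrary constant $(u,\pi)$ and keeping $\sup_{\theta,\xi}$ in the DPP yields only an \emph{upper} bound, $V(t_0,x_0)\le\sup_{\theta,\xi}\bigl[\int_{t_0}^{t_0+h}\ell\,ds+V(t_0+h,X_{t_0+h})\bigr]$. Since $V-\phi$ has a local minimum, $V(t_0+h,\cdot)\ge\phi(t_0+h,\cdot)$, so replacing $V$ by $\phi$ inside \emph{decreases} the right-hand side; your displayed inequality $\phi(t_0,x_0)\le\sup_{\theta,\xi}[\int\ell+\phi]$ therefore does not follow from the stated premises. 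That pairing (freeze the minimizing player, keep the maximizer's supremum, substitute $\phi\le V$ at the later time) is exactly the combination that proves the \emph{sub}solution property at a local \emph{maximum} of $V-\phi$. For the supersolution property you need the other half of the DPP: keep the full $\inf_{u,\pi}$ in front (as the paper does), or select $\varepsilon$-optimal $(u,\pi)$, so that $\phi(t_0,x_0)=V(t_0,x_0)\ge\inf_{u,\pi}\sup_{\theta,\xi}[\int\ell+V(t_0+h,\cdot)]-\varepsilon h\ge\inf_{u,\pi}\sup_{\theta,\xi}[\int\ell+\phi(t_0+h,\cdot)]-\varepsilon h$, after which the Taylor/limit argument (testing the inner supremum against constant $(\theta,\xi)$) proceeds.

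Second, your closing argument for part 3 does not close. The comparison principle gives subsolution $\le$ supersolution; applied to the pair ($\widehat V$ sub, $V$ super) it yields $\widehat V\le V$, which is the \emph{same} inequality as weak duality ($\sup\inf\le\inf\sup$), not its reverse---you have the two directions crossed, so neither step delivers $V\le\widehat V$. To obtain equality one must upgrade parts 1--2 under Isaacs: each value function is in fact a full viscosity solution (both sub- and supersolution) of the common equation once the lower and upper Isaacs Hamiltonians coincide, and then uniqueness from Theorem~\ref{theo:comparison} identifies them. The paper is terse on exactly this point, but your version as written derives $\widehat V\le V$ twice and never the reverse inequality.
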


The proof follows viscosity arguments for robust control problems and is deferred to Appendix~\ref{proo:visc}.

\subsubsection{Comparison principle for the robust HJBI}\label{sect:comparison}
Let $x=(m,v)\in \mathbb{R}\times\mathbb{R}_+$ and define the Isaacs Hamiltonian as in \cref{eq:IsaacsH}. Consider the HJBI $-\partial_t V(t,x) + H\big(x,\nabla_x V(t,x)\big) = 0, \quad (t,x)\in [0,T)\times(\mathbb{R}\times\mathbb{R}_+),$ with terminal condition $V(T,x)=g(x)=G_mm^2+G_vv$.

\begin{assumption}[structural and growth conditions]\label{assu:B}
We assume the following.
\begin{enumerate}
\item $R_u \ge c_u > 0$ and $\min_{u\in[u_{\min},u_{\max}]} w_2(u) \ge c_w > 0$.
\item The Hamiltonian $H(x,p)$ is continuous in $(x,p)$, locally Lipschitz in $x$ on bounded sets, with at most polynomial growth in $x$ and at most quadratic growth in $p$.
\item Any viscosity subsolution and supersolution considered are continuous, satisfy for some $C,k$ the bound $|U(t,x)|\le C(1+|x|^k)$ uniformly in $t$, and attain the terminal condition in the viscosity sense.
\item State-constraint boundary at $v=0$. We work on the closed set $\mathbb{R}\times\mathbb{R}_+$ with \emph{constrained viscosity solutions} in the sense of Soner~\cite{Soner1986II}. No boundary condition is prescribed at $v=0$. Viscosity inequalities are tested with constrained semijets (\emph{i.e.}, using interior test functions).
\end{enumerate}
\end{assumption}

\begin{theorem}[comparison principle and uniqueness]\label{theo:comparison}
Let $U$ be a bounded-from-below, polynomial growth viscosity subsolution of
\[-\partial_t U + H(x,\nabla_x U) \le 0 \quad \text{on } [0,T)\times(\mathbb{R}\times\mathbb{R}_+),\]
and $W$ be a viscosity supersolution of
\[-\partial_t W + H(x,\nabla_x W) \ge 0 \quad \text{on } [0,T)\times(\mathbb{R}\times\mathbb{R}_+),\]
with $U(T,\cdot)\le g(\cdot)\le W(T,\cdot)$ and the growth in Assumption~\ref{assu:B}.

Then
$U(t,x) \le W(t,x) \text{ for all } (t,x)\in [0,T]\times(\mathbb{R}\times\mathbb{R}_+).$ Consequently, the viscosity solution to the HJBI is unique in the polynomial-growth class. In particular, if Isaacs’ condition holds so that $V=\widehat V$, then this common value is the unique viscosity solution.
\end{theorem}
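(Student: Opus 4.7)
The plan is to prove the comparison principle via the classical doubling-of-variables method of Crandall, Ishii, and Lions, adapted to three features specific to our setting: the unbounded state space $\mathbb{R}\times\mathbb{R}_+$ with polynomial-growth solutions, the state constraint at $v=0$ in the sense of Soner~\cite{Soner1986II}, and the quadratic growth of $H(x,\cdot)$ in $p$ induced by the adversary's entropy penalty.

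First, I would perform a strict-subsolution reduction: replace $U$ by $\tilde U(t,x):=U(t,x)-\gamma(T-t)$ for $\gamma>0$, so that $\tilde U$ satisfies $-\partial_t\tilde U+H(x,\nabla_x\tilde U)\le -\gamma<0$ in the viscosity sense, with $\tilde U(T,\cdot)=U(T,\cdot)\le g$. It then suffices to show $\tilde U\le W$ for every $\gamma>0$ and let $\gamma\downarrow 0$. Suppose toward contradiction that $\sup(\tilde U-W)>0$. Introduce the doubled and compactified functional
\[\Phi_{\varepsilon,\delta}(t,x,y)\;:=\;\tilde U(t,x)-W(t,y)-\frac{|x-y|^2}{2\varepsilon}-\delta\psi(x)-\delta\psi(y),\]
where $\psi(x)=(1+|x|^2)^{k+1}$ dominates the polynomial growth of $\tilde U-W$ guaranteed by Assumption~\ref{assu:B}. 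The penalty forces the supremum of $\Phi_{\varepsilon,\delta}$ to be attained at some $(\hat t,\hat x,\hat y)$ in a compact set, and the standard penalty lemma yields $|\hat x-\hat y|^2/\varepsilon\to 0$ as $\varepsilon\downarrow 0$; the terminal inequality $\tilde U(T,\cdot)\le g\le W(T,\cdot)$ rules out $\hat t=T$.

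At this maximizer, the first-order version of the theorem on sums produces $a\in\mathbb{R}$ and gradients $p_1=\frac{\hat x-\hat y}{\varepsilon}+\delta\nabla\psi(\hat x)$, $p_2=\frac{\hat x-\hat y}{\varepsilon}-\delta\nabla\psi(\hat y)$, with $(a,p_1)\in\overline{\mathcal J}^{1,+}\tilde U(\hat t,\hat x)$ and $(a,p_2)\in\overline{\mathcal J}^{1,-}W(\hat t,\hat y)$. Subtracting the strict subsolution and the supersolution inequalities gives
\[\gamma\;\le\;H(\hat x,p_1)-H(\hat y,p_2),\]
so it remains to show the right-hand side vanishes as $\varepsilon,\delta\downarrow 0$. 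Decomposing $H$ into its quadratic-in-$p$ part $\lambda_m p_m^2+\lambda_v p_v^2$ and the remainder, note that $p_1-p_2=\delta(\nabla\psi(\hat x)+\nabla\psi(\hat y))=O(\delta)$ while the penalty keeps $|\hat x|,|\hat y|$ (hence $|p_1|+|p_2|$) bounded in terms of $\delta$, so the quadratic contribution $\lambda_m(p_{m,1}+p_{m,2})(p_{m,1}-p_{m,2})$ and its $p_v$-analogue vanish in the iterated limit $\varepsilon\downarrow 0$, then $\delta\downarrow 0$. The remaining terms---the $w_1 m^2$ mismatch, the affine drift component $-2\beta v p_v+(\sigma_L^2+\sigma_c^2)p_v$, and the $\inf_{u,\pi}$ piece of $H$---are controlled by $|\hat x-\hat y|$, $|\hat x-\hat y|^2/\varepsilon$, and continuity of the infimum in $(v,p)$ on compacts (Berge's maximum theorem).

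The principal obstacle is handling the state constraint at $v=0$ together with the quadratic Hamiltonian. If $\hat x$ lies on the boundary $\{\hat v_x=0\}$, the subsolution inequality for $\tilde U$ is not directly available in the constrained framework. Following Soner~\cite{Soner1986II}, I would exploit the inward-pointing structure at $v=0$: the standing assumption $\sigma_L^2+\sigma_c^2\ge\chi\pi_{\max}$ guarantees the controlled drift in the $v$-direction is nonnegative on $\{v=0\}$ for every admissible $\pi$, yielding an inward cone condition that extends the subsolution inequality up to the boundary for interior test functions. If additional room is needed, interiority of $\hat x$ can be enforced by adding a mild barrier $-\delta/(\hat v_x+\delta)^{\alpha}$ with small $\alpha>0$ to $\Phi_{\varepsilon,\delta}$ and relaxed afterwards. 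Once $\tilde U\le W$ is proved, letting $\gamma\downarrow 0$ gives $U\le W$; uniqueness then follows by applying the comparison in both directions, and combined with Theorem~\ref{theo:visc}(iii) identifies the common value $V=\widehat V$ as the unique polynomial-growth viscosity solution of the HJBI.
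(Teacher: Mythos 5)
Your proposal follows essentially the same route as the paper's proof: doubling of variables with a polynomial penalization dominating the growth class of Assumption~\ref{assu:B}, the (first-order) theorem on sums, continuity/structure conditions on $H$ to make the Hamiltonian difference vanish, and Soner-type constrained viscosity solutions at $v=0$. Your strict-subsolution reduction $U-\gamma(T-t)$ and use of a single time variable are standard, equivalent packaging of the paper's time-doubling plus direct $\limsup$ argument, and your boundary treatment (inward-pointing drift in the $v$-direction from $\sigma_L^2+\sigma_c^2\ge\chi\pi_{\max}$, or a barrier forcing interiority of the maximizer) is in fact more explicit than the paper's bare appeal to constrained semijets.

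One step does not close as written: you dismiss the quadratic part $\lambda_m p_m^2+\lambda_v p_v^2$ on the grounds that $p_1-p_2=O(\delta)$ while $|p_1|+|p_2|$ is ``bounded in terms of $\delta$.'' The latter is false: both gradients contain the term $(\hat x-\hat y)/\varepsilon$, and for merely continuous $U,W$ the penalty lemma only gives $|\hat x-\hat y|^2/\varepsilon\to 0$, so $|\hat x-\hat y|/\varepsilon$ may grow like $o(\varepsilon^{-1/2})$. The cross term $(p_{1}+p_{2})\cdot(p_{1}-p_{2})$ then behaves like $\delta\,|\nabla\psi|\cdot|\hat x-\hat y|/\varepsilon$, which is not controlled in your iterated limit ($\varepsilon\downarrow 0$ first with $\delta$ fixed). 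This is precisely the known delicacy of comparison for Hamiltonians with quadratic growth in $p$ on unbounded domains; repairing it requires either an a priori local Lipschitz bound on the subsolution (available here from coercivity of $H$ in $p$, so that $|\hat x-\hat y|/\varepsilon$ stays bounded on the compact set where the maximum lives), or the convexity-in-$p$ device of comparing $\mu U$ with $W$ for $\mu<1$ and letting $\mu\uparrow 1$. In fairness, the paper's own proof asserts $|p_X-p_Y|\to 0$ and invokes ``continuity of $H$'' at exactly the same spot without addressing the unboundedness of $|p_X|+|p_Y|$, so your sketch is no less rigorous than the printed argument; but if you intend a complete proof, this is the step to repair. (Also note a sign slip: subtracting the jet inequalities gives $\gamma\le H(\hat y,p_2)-H(\hat x,p_1)$, not the reverse; harmless since you bound the absolute difference.)
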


The proof relies on the doubling-of-variables technique and is provided in Appendix~\ref{proo:comparison}.

\begin{remark}
The comparison principle holds under Assumption~\ref{assu:B} because:
\begin{enumerate}
\item $H$ is continuous in $(x,p)$, locally Lipschitz in $x$, with polynomial/quadratic growth (Assumption~\ref{assu:B}(2)), ensuring Ishii--Lions stability (Ishii and Lions~\cite{IshiiLions1990}, and Crandall, Ishii, and Lions~\cite{CrandallIshiiLions1992}).
\item Polynomial growth bounds (Assumption~\ref{assu:B}(3)) provide barriers for the coercive penalization at infinity. If preferred, one can localize on bounded domains and let the radius $\to\infty$ instead of using the $\zeta$-term (see Appendix~\ref{proo:comparison}).
\item The state-constraint boundary at $v=0$ (Assumption~\ref{assu:B}(4)) is handled via constrained semijets on the closed set, eliminating boundary terms in the comparison argument. The constrained viscosity framework ensures that test functions respect the state 
constraint at $v=0$, and the doubling of variables is performed only on the interior 
of the domain where both $U$ and $W$ are tested with smooth functions.
\end{enumerate}
\end{remark}

\begin{remark}
By the comparison principle, viscosity solutions are unique in the polynomial-growth class. Since \Cref{theo:visc}(1)-(3) identify $V$ as a supersolution and $\widehat V$ as a viscosity subsolution with the same terminal condition, we obtain $\widehat V \le V$. If, in addition, Isaacs' condition holds as in \Cref{theo:visc}(3), then $\widehat V = V$, and the value function is the unique viscosity solution to the HJBI.
\end{remark}

\subsubsection{Existence for the robust HJBI}\label{sect:existence}
We work on the state space $[0,T]\times\mathbb{R}\times\mathbb{R}_+$ with the state-constraint boundary at $v=0$, as in \cref{sect:comparison}. Let $H$ denote the Isaacs Hamiltonian defined in \cref{eq:IsaacsH}. Eliminating adversarial distortions via the KL dual yields the convex quadratic terms $\lambda_m p_m^2 + \lambda_v p_v^2$ in $H$.

As in Proposition~\ref{prop:HJBI}, the robust HJBI equation is
\[-\partial_t V(t,x) + H\big(x,\nabla_x V(t,x)\big) \,=\, 0, \qquad (t,x)\in [0,T)\times(\mathbb{R}\times\mathbb{R}_+),\]
with terminal condition $V(T,x)=g(x)$.

We retain the structural and growth conditions from Assumption~\ref{assu:B} for $H$. For control domains, we use one of the following options:
\begin{enumerate}
    \item Compact controls (default): the control sets $\mathcal U,\mathcal P$ are compact intervals (Assumption~\ref{assu:A}(1)).
    \item Unbounded but coercive: replace compactness by the following assumption.
\end{enumerate}

\begin{assumption}[coercive running cost]\label{assu:C}
For each fixed $(t,m,v,\theta,\xi)$, the running cost \newline $\ell(t,m,v,u,\pi,\theta,\xi)$ is coercive in $(u,\pi)$, \emph{i.e.,} $\ell(t,m,v,u,\pi,\theta,\xi)\to+\infty$ as $\lVert (u,\pi) \rVert \to \infty$.
\end{assumption}

\begin{theorem}[existence of a viscosity solution]\label{theo:existence}
Under Assumption~\ref{assu:B} and either compact controls (Assumption~\ref{assu:A}(1)) or Assumption~\ref{assu:C}, there exists a continuous viscosity solution $V$ to the HJBI with terminal condition $g$, with at most polynomial growth. Moreover, if Isaacs' condition holds so that the Isaacs Hamiltonian $H$ is well-defined, then $V$ coincides with the robust control value function defined via the DPP.
\end{theorem}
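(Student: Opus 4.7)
The plan is a two-stage argument: I construct a viscosity solution via Perron's method using explicit polynomial barriers, then identify it with the DPP value function under Isaacs' condition.

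First, I would build classical polynomial sub- and super-solutions $V^{\pm}(t,x) = \pm C(1 + m^2 + v) + K(T-t)$ with $C,K$ chosen so that $V^{+}$ satisfies $-\partial_t V^{+} + H(x,\nabla_x V^{+}) \ge 0$ and $V^{-}$ the reverse inequality, together with $V^{-}(T,\cdot)\le g \le V^{+}(T,\cdot)$. This is feasible under Assumption~\ref{assu:B}(2): the quadratic-in-$p$ growth of $H$ contributes $4\lambda_m C^2 m^2 + \lambda_v C^2$, controlled by $K$ together with the polynomial growth in $x$, while the coercive cost $R_u u^2 + R\pi^2$ absorbs the control-dependent terms through the inf. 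Applying Perron's method in the sense of Ishii (see Crandall, Ishii, and Lions~\cite{CrandallIshiiLions1992}), I define $V := \sup\{\,w : w \text{ is a viscosity subsolution with } w \le V^{+}\,\}$; the standard argument shows that its upper-semicontinuous envelope $V^{*}$ is a subsolution and its lower-semicontinuous envelope $V_{*}$ a supersolution, both in the polynomial-growth class. Theorem~\ref{theo:comparison} then forces $V^{*} \le V_{*}$, hence $V$ is continuous and a viscosity solution of the HJBI with the required terminal condition.

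Under Isaacs' condition, Theorem~\ref{theo:visc} identifies the lower and upper values from the DPP as a supersolution and a subsolution with the same terminal condition; uniqueness from Theorem~\ref{theo:comparison} forces both to coincide with the Perron solution, giving the identification with the robust-control value function. For the unbounded-control case under Assumption~\ref{assu:C}, I would truncate admissible controls to $\mathcal U_n, \mathcal P_n$ compact, apply the compact result to obtain $V_n$ with $n$-independent barriers, and pass to the limit via the locally-uniform convergence stability of viscosity solutions; coercivity ensures minimizing controls lie in a compact sublevel set of $\ell$, so the truncation is eventually inactive. The main obstacle I anticipate is the constrained-viscosity treatment at $v=0$: Perron's construction, the barrier inequalities, and the comparison step must all use interior test functions consistent with Soner's state-constraint formulation, and $V^{-}$ must respect the terminal constraint on the boundary without probing $v<0$.
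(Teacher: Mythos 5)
Your route is genuinely different from the paper's. The paper proves existence by constructing a monotone, stable, consistent semi-Lagrangian time-discretization $V^{\Delta}$ (with the Euler step restricted to preserve $v\ge 0$) and invoking the Barles--Souganidis convergence framework together with half-relaxed limits and the comparison principle of Theorem~\ref{theo:comparison}; you use Perron's method with explicit polynomial barriers. Both arguments rest on the same comparison principle, and the identification with the DPP value under Isaacs' condition is handled identically. The Perron route is more classical and avoids discretization bookkeeping; the scheme route doubles as a constructive/numerical method and makes the viability constraint at $v=0$ trivial to enforce, whereas in your construction the bump-function step of Perron's method must be carried out with constrained semijets at the boundary, which you correctly flag as the delicate point.

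There is, however, a concrete gap in the barrier construction. For $V^{-}(t,x)=-C(1+m^2+v)+K(T-t)$ to be a subsolution you need $K+H\big(x,(-2Cm,-C)\big)\le 0$ for all $(m,v)$; but $H$ contains $4\lambda_m C^2 m^2 + w_1 m^2\ge 0$, which grows quadratically in $m$, so no constant $K$ can work. Symmetrically, for $V^{+}$ the term $p_v(-2\beta v)=-2\beta C v$ is negative and unbounded in $v$ and is not absorbed by a constant $K$ once $2\beta C$ exceeds $c_w$. The standard fix is a time-dependent amplitude, $V^{\pm}=\pm C e^{\rho(T-t)}(1+m^2+v)$, so that $-\partial_t V^{\pm}$ contributes a term proportional to $(1+m^2+v)$ of the correct sign. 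Even then, dominating the adversarial quadratic $4\lambda_m C^2 e^{2\rho(T-t)}m^2$ by $C\rho e^{\rho(T-t)}m^2$ forces a condition of the form $\rho\ge 4\lambda_m C e^{\rho T}$, which is solvable only for suitable parameter/horizon combinations --- consistent with the finite-time blow-up in Proposition~\ref{prop:stability}. So the barrier step is where the real work (and an implicit well-posedness restriction) lives, and as written it does not go through; the remainder of your argument (Perron construction, comparison, identification with the DPP value, and the truncation argument under Assumption~\ref{assu:C}) is sound.
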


The existence is established via the convergence of a monotone approximation scheme (see Appendix~\ref{proo:existence}).

\subsection{Verification theorem and Riccati equation derivation}
We continue under the previous setting. The Isaacs Hamiltonian $H$ is the one in \cref{eq:IsaacsH}. Control sets $\mathcal U,\mathcal P$ are compact (Assumption~\ref{assu:A}(1)). Structural and growth assumptions are those in Assumption~\ref{assu:B}.

\subsubsection{Verification theorem for the robust HJBI}\label{sect:veri}
\begin{theorem}[verification statement]\label{theo:veri}
Let $V\in C([0,T]\times(\mathbb{R}\times\mathbb{R}_+))$ with at most polynomial growth satisfy, in the viscosity sense,
\[-\partial_t V(t,x) + H\big(x,\nabla_x V(t,x)\big) = 0 \quad \text{on } [0,T)\times(\mathbb{R}\times\mathbb{R}_+), \qquad V(T,x)=g(x).\]
Assume Isaacs' condition holds so that the Isaacs Hamiltonian is well-defined (see \Cref{theo:visc}(3)), and that measurable minimizers exist for the Hamiltonian, guaranteed by compactness of the action sets and continuity in the controls. Then $V$ coincides with the robust value function, and the feedback controls that minimize the Hamiltonian are optimal for the robust problem.
\end{theorem}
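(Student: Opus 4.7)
The plan is to deduce the verification theorem from the uniqueness machinery already assembled rather than attempt a direct chain-rule verification, which would require smoothness beyond the assumed continuity. The two claims — identification of $V$ with the robust value function and optimality of the Hamiltonian-minimizing feedback — are handled in that order.

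First I would note that, under Isaacs' condition, \Cref{prop:HJBI} and \Cref{theo:visc}(3) identify the robust value function $V_{\mathrm{rob}}(t,x) := \inf_{(u,\pi)}\sup_{(\theta,\xi)}\bigl[\int_t^T \ell(\cdots)\,ds + g(X_T)\bigr]$ as a polynomial-growth continuous viscosity solution of the HJBI with terminal datum $g$; the candidate $V$ lies in the same class with the same terminal condition by hypothesis. Applying \Cref{theo:comparison} twice — once with $V$ as subsolution and $V_{\mathrm{rob}}$ as supersolution, and then with the roles reversed — yields $V = V_{\mathrm{rob}}$ on $[0,T]\times(\mathbb{R}\times\mathbb{R}_+)$.

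Next, I would produce the feedback by a measurable-selection argument. For each $(t,x)$ with $p=\nabla_x V(t,x)$, the inner minimand in \cref{eq:IsaacsH} is continuous and strictly convex in $(u,\pi)$ on the compact set $\mathcal U\times\mathcal P$ (since $R_u,R>0$), so its unique minimizer depends continuously on $(t,x,p)$ by Berge's maximum theorem; a Kuratowski-Ryll-Nardzewski selection then delivers Borel-measurable feedback maps $(u^\star(t,x),\pi^\star(t,x))$, while the worst-case distortions $(\theta^\star,\xi^\star)=(2\lambda_m p_m,2\lambda_v p_v)$ are explicit. Substituting these into the moment ODE produces a closed-loop system whose Carathéodory solution remains in $\mathbb{R}\times\mathbb{R}_+$ by the parameter restriction $\sigma_L^2+\sigma_c^2\ge \chi\pi_{\max}$ stated after Remark~\ref{rema:variance}.

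The main obstacle is that continuity alone does not support a pointwise chain rule for $V$ along trajectories. I would resolve this by invoking the quadratic ansatz of \cref{eq:ansatz}: the LQ structure, together with the Riccati system \cref{eq:Riccati} derived subsequently, forces $V$ to coincide with a $C^{1,2}$ quadratic form, so integrating $-\partial_t V + H(x,\nabla V)=0$ along the closed-loop trajectory yields $V(0,x_0) = \int_0^T \ell(s,X_s,u^\star_s,\pi^\star_s,\theta^\star_s,\xi^\star_s)\,ds + g(X_T)$. Combined with $V=V_{\mathrm{rob}}$ and the saddle structure furnished by Isaacs, this identity certifies that the Hamiltonian-minimizing feedback attains the inf-sup value and is therefore optimal; a fully regularity-free alternative would replace this step with inf/sup-convolution approximations and a limit argument, but the quadratic ansatz renders such machinery unnecessary here.
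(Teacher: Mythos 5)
Your first two steps track the paper's own argument: the paper likewise identifies $V$ with the robust value by combining the DPP (Proposition~\ref{prop:DPP}) with the comparison principle of \Cref{theo:comparison}, and likewise obtains measurable minimizers from compactness and continuity via Berge's maximum theorem. The divergence --- and the gap --- is in your third step. \Cref{theo:veri} is stated for an \emph{arbitrary} continuous, polynomial-growth viscosity solution $V$; to justify the chain rule along the closed-loop trajectory you assert that ``the LQ structure, together with the Riccati system \cref{eq:Riccati}, forces $V$ to coincide with a $C^{1,2}$ quadratic form.'' This is not available at the level of generality of the theorem: the Riccati system admits a global bounded solution only under the stability conditions of Proposition~\ref{prop:stability} (otherwise it blows up in finite backward time), and even when it exists the quadratic form solves the HJBI only in the interior (unprojected) regime --- the paper's own remark on piecewise dynamics in saturated regimes makes clear that $V$ is not globally quadratic once a control saturates. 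Moreover, the quadratic ansatz and its verification (\cref{sect:quadratic}) are themselves established \emph{by citing} \Cref{theo:veri}, so importing them here is circular as a matter of the paper's logical architecture.

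The paper avoids this by running the standard super-/submartingale (here, monotonicity-along-trajectories) verification argument for $t\mapsto \Phi(t,X_t)$ directly, in the viscosity framework, rather than upgrading $V$ to a classical solution; in the deterministic first-order setting this can be made rigorous exactly by the inf-/sup-convolution (or test-function) approximation you mention and then dismiss as unnecessary. That regularity-free route is in fact the necessary one for the theorem as stated. Your argument, as written, proves the verification statement only in the interior LQ regime under the stability conditions --- a strictly weaker claim than the one the theorem makes and than the uses to which it is later put.
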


The proof follows verification arguments using the DPP and comparison principle. Details are in Appendix~\ref{proo:veri}.

\begin{remark}[explicit selectors]
When $w_2(u)=\bar w_2 + \kappa u$ and no saturation occurs at the control bounds, the first-order conditions for the minimization in $H$ yield
\[2 R_u\, u^* + \eta\,\partial_m V + \kappa v = 0, \qquad 2 R\, \pi^* - \chi\, \partial_v V = 0,\] hence
\begin{equation}\label{eq:minimizers}
u^* = -\frac{\eta\,\partial_m V + \kappa v}{2 R_u}, \qquad \pi^* = \frac{\chi\,\partial_v V}{2 R}.
\end{equation}
Projection onto $\mathcal U$ and $\mathcal P$ enforces the bounds.

Fix $(t,x)$ and write $q_m = \partial_m V(t,x)$ and $q_v = \partial_v V(t,x)$ for the adjoint variables (co-states). Since the adversary’s model distortions are penalized by KL, their instantaneous cost is quadratic in $\theta$ and $\xi$, normalized as $\frac{1}{4\lambda}$ times the square. In the HJBI, this yields a pointwise optimization of a linear term minus that quadratic penalty. By the Legendre transform (\emph{e.g.,} Bauschke and Combettes~\cite{BauschkeCombettes2017}, Definition 13.1),
\[\sup_{z\in\mathbb{R}}\Big\{ z\,q - \tfrac{1}{4\lambda}\,z^2 \Big\} = \lambda\,q^2, \quad\text{with maximizer}\quad z^* = 2\lambda\,q.\]
The coefficient $\tfrac{1}{4\lambda}$ arises from dualizing the relative-entropy budget in \cref{eq:robust}. The normalization is chosen so that the maximizer takes the form $z^* = 2\lambda q$. Thus, we have the following KL-dual convention:
\begin{equation}\label{eq:kl_dual}
\sup_{\theta\in\mathbb{R}}\Big\{ \theta\,\partial_m V - \tfrac{1}{4\lambda_m}\theta^2 \Big\} = \lambda_m\big(\partial_m V\big)^2, \qquad \sup_{\xi\in\mathbb{R}}\Big\{ \xi\,\partial_v V - \tfrac{1}{4\lambda_v}\xi^2 \Big\} = \lambda_v\big(\partial_v V\big)^2.
\end{equation}

\noindent Under \cref{eq:kl_dual}, the adversary solves the pointwise problems
\[\sup_{\theta\in\mathbb{R}}\Big\{ \theta\,q_m - \tfrac{1}{4\lambda_m}\theta^2 \Big\}, \qquad \sup_{\xi\in\mathbb{R}}\Big\{ \xi\,q_v - \tfrac{1}{4\lambda_v}\xi^2 \Big\}.\]
Each objective is strictly concave. Differentiating gives the unique maximizers
\[\theta^* = 2\lambda_m\,q_m, \qquad \xi^* = 2\lambda_v\,q_v.\]
Equivalently, completing the square shows
\[\theta\,q_m - \frac{1}{4\lambda_m}\theta^2 = \lambda_m q_m^2 - \frac{1}{4\lambda_m}\big(\theta - 2\lambda_m q_m\big)^2,\]
so the maximum value is $\lambda_m q_m^2$ (and analogously $\lambda_v q_v^2$ for $\xi$). Substituting the maximizers adds the terms $\lambda_m(\partial_m V)^2 + \lambda_v(\partial_v V)^2$ to the Isaacs Hamiltonian. The coefficients $\tfrac{1}{4\lambda_m}$ and $\tfrac{1}{4\lambda_v}$ are chosen so that $\sup_{z}\{z\,q - \tfrac{1}{4\lambda}z^2\}=\lambda q^2$ and $z^*=2\lambda q$ hold.
\end{remark}

\subsubsection{Existence of saddle points}\label{sect:saddle}
We continue with the same framework. In \Cref{theo:saddle}, we state that for our deterministic robust LQ setting, the Isaacs condition holds pointwise and there exist feedback $(u^*,\pi^*,\theta^*,\xi^*)$ that forms a saddle point for both the differential game and the HJBI.

\begin{theorem}[saddle point via square completion]\label{theo:saddle}
Assume $R_u>0$, $R>0$, $\lambda_m>0$, $\lambda_v>0$, and that $\mathcal U$ and $\mathcal P$ are compact convex intervals. Then for every $(t,x)$ with
$p \,=\, \nabla_x V(t,x) \,=\, (\partial_m V(t,x),\, \partial_v V(t,x))$, we have:
\begin{enumerate}
\item the min-max over $(u,\pi)\in\mathcal U\times\mathcal P$ and the max over $(\theta,\xi)\in\mathbb R^2$ commute (Isaacs holds pointwise for $H$), and
\item there exist measurable feedback $(u^*,\pi^*,\theta^*,\xi^*)$ forming a saddle point for the differential game and for the HJBI.
\end{enumerate}
Moreover, the optimal feedback coincides with the first-order minimizers/maximizers from \cref{sect:veri}:
\[(u_t^*,\pi_t^*,\theta_t^*,\xi_t^*) \,=\, \Big( \Pi_{\mathcal U}\!\big(-\tfrac{\eta\,\partial_m V + \kappa v}{2R_u}\big),\; \Pi_{\mathcal P}\!\big(\tfrac{\chi\,\partial_v V}{2R}\big),\; 2\lambda_m\,\partial_m V,\; 2\lambda_v\,\partial_v V \Big)\Big|_{(t,X_t)}.\]
Projection onto $\mathcal U$ and $\mathcal P$ enforces the bounds.
\end{theorem}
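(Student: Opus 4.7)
The plan is to exploit the fact that the Isaacs Hamiltonian $H$ in \cref{eq:IsaacsH} is fully separable in the controller's variables $(u,\pi)$ and the adversary's variables $(\theta,\xi)$, which trivializes the Isaacs condition and reduces the existence of a saddle point to four independent pointwise scalar optimizations.

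First, I would expand the bracketed expression inside the $\inf\sup$ of \cref{eq:IsaacsH_1} and observe that it decomposes as a sum $f_u(u) + f_\pi(\pi) + f_\theta(\theta) + f_\xi(\xi) + C(x,p)$, where $f_u(u) = R_u u^2 + (\eta p_m + \kappa v)u$, $f_\pi(\pi) = R\pi^2 - \chi p_v \pi$, $f_\theta(\theta) = p_m \theta - \theta^2/(4\lambda_m)$, $f_\xi(\xi) = p_v \xi - \xi^2/(4\lambda_v)$, and $C$ collects all terms not depending on any of $(u,\pi,\theta,\xi)$. Since no cross-term couples the controller's variables to the adversary's, the iterated optimization is order-free: $\inf_{u,\pi}\sup_{\theta,\xi}$ and $\sup_{\theta,\xi}\inf_{u,\pi}$ both equal the sum of the individual optimal values. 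This establishes part (1), the pointwise Isaacs condition, without invoking Sion's minimax theorem.

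Second, I would solve each of the four scalar problems. For the adversary, $f_\theta$ and $f_\xi$ are strictly concave quadratics on $\mathbb R$ (since $\lambda_m,\lambda_v>0$); completing the square, as in the display preceding \cref{eq:kl_dual}, yields the unique maximizers $\theta^* = 2\lambda_m \partial_m V$ and $\xi^* = 2\lambda_v \partial_v V$. For the controller, $f_u$ and $f_\pi$ are strictly convex quadratics (since $R_u>0$ and $R>0$) on the compact convex intervals $\mathcal U$ and $\mathcal P$. The unconstrained first-order conditions recover the interior minimizers in \cref{eq:minimizers}, and projection onto $\mathcal U$ and $\mathcal P$ yields the constrained minimizers; strict convexity ensures uniqueness, and the projection onto a convex interval is Lipschitz, so the resulting feedback is continuous in $(t,x)$ whenever $\nabla_x V$ is.

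Third, I would promote the four pointwise optima to a saddle point. The four-way separability, combined with strict convexity in $(u,\pi)$ and strict concavity in $(\theta,\xi)$, directly yields the two saddle-point inequalities for the integrand of $H$ at every $(t,x)$. To lift this from the Hamiltonian to the differential game, I would apply the verification theorem \cref{theo:veri}: substituting the feedback $(u^*,\pi^*)$ and $(\theta^*,\xi^*)$ into the state dynamics and running cost saturates the HJBI pointwise, so the resulting cost along this trajectory equals $V(0,x_0)$, which by the comparison principle \cref{theo:comparison} coincides with both the lower and upper value. This proves part (2) and confirms the explicit form of the optimal feedback.

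The main obstacle is ensuring the measurability and admissibility of the projected selectors along trajectories, since the projection may alternate between interior and boundary behavior in $\mathcal U$ or $\mathcal P$. This reduces to the Lipschitz continuity of the Euclidean projection onto a convex interval composed with the continuous map $(t,x)\mapsto \nabla_x V(t,x)$ inherited from the LQ viscosity solution in \cref{theo:existence}. A secondary subtlety is that the adversary's selectors are linear in $\nabla_x V$ and a priori unbounded, but the polynomial growth of $V$ in \cref{theo:existence}, together with the linearity of the Riccati-induced gradients in the LQ regime, ensures the square-integrability required by \cref{assu:A}(1).
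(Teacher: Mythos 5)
Your proposal is correct, and it diverges from the paper's proof in one substantive way: the justification of the pointwise Isaacs condition. The paper completes the square in $(\theta,\xi)$ and in $(u,\pi)$ exactly as you do, but then invokes Sion's minimax theorem (with $X=\mathcal U\times\mathcal P$ compact convex, $Y=\mathbb R^2$ convex, and the integrand strictly convex--concave) to interchange $\inf$ and $\sup$ and to obtain the saddle point. You instead observe that the integrand of \cref{eq:IsaacsH_1} is \emph{additively separable} into $f_u(u)+f_\pi(\pi)+f_\theta(\theta)+f_\xi(\xi)+C(x,p)$ with no cross-terms coupling the controller's variables to the adversary's --- which is true: the only potentially problematic term, $w_2(u)v=\bar w_2 v+\kappa u v$, involves the state $v$ but no adversarial variable --- so the interchange is an identity of arithmetic once each scalar optimum is finite (guaranteed by $R_u,R>0$ on compact intervals and by the strict concavity and coercivity of the KL penalties on $\mathbb R$). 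Your route is more elementary and self-contained; the paper's route via Sion is more robust, in that it would survive extensions where a genuine bilinear coupling between $(u,\pi)$ and $(\theta,\xi)$ appears in the Hamiltonian, whereas your separability argument would then fail. The remaining steps --- explicit maximizers $\theta^*=2\lambda_m\partial_m V$, $\xi^*=2\lambda_v\partial_v V$, projected minimizers from \cref{eq:minimizers}, lifting to the differential game via \cref{theo:veri} and the comparison principle, and the measurability/admissibility discussion via Lipschitz projections and polynomial growth --- match the paper's argument in substance.
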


\begin{proof}
Consider the objective function inside the curly braces in \cref{eq:IsaacsH_1}. Square completion in $(\theta,\xi)$ combines the linear drift terms with the quadratic penalties in $\ell$ to yield adversarial maximizers with achieved value $\lambda_m p_m^2 + \lambda_v p_v^2$. Similarly, square completion in $(u,\pi)$ yields the unconstrained minimizers \cref{eq:minimizers}, while metric projections onto $\mathcal U$ and $\mathcal P$ enforce bounds. The mapping is continuous, strictly convex in $(u,\pi)$ and strictly concave in $(\theta,\xi)$. Since the assumption $\bar w_2 + \kappa\, u_{\min} > 0$ (\cref{sect:model}) ensures $w_2(u) > 0$ for all $u \in \mathcal{U}$, the objective remains strictly convex in $u$. With $\mathcal U\times\mathcal P$ compact and convex, Sion's~\cite{sion1958} minimax theorem applies: if $X$ is compact and convex, $Y$ is convex, and $f:X\times Y\to\mathbb R$ is convex and lower semicontinuous in $x\in X$ and concave and upper semicontinuous in $y\in Y$, then
\[ \min_{x\in X} \sup_{y\in Y} f(x,y) \;=\; \sup_{y\in Y} \min_{x\in X} f(x,y), \]
and when one side is compact and the other convex with the respective semicontinuity, the extrema are attained, yielding a saddle point. Applied here with $X=\mathcal U\times\mathcal P$, $Y=\mathbb R^2$, and the objective function continuous, strictly convex in $(u,\pi)$ and strictly concave in $(\theta,\xi)$, we obtain pointwise Isaacs equality for $H$ and a saddle point at the feedback above when $p=\nabla_x V$. Admissibility follows from polynomial growth and compactness, and optimality follows from \Cref{theo:veri} together with the DPP and the comparison principle.
\end{proof}

\begin{remark}[optimality of projected controls]
Since the objective function in the Isaacs Hamiltonian \cref{eq:IsaacsH_1} is strictly convex in $(u,\pi)$ and $\mathcal{U} \times \mathcal{P}$ is convex and compact, the constrained minimum equals the metric projection of the unconstrained minimizer onto the admissible set. This justifies the computational strategy in \cref{sect:simul}: solving the unconstrained first-order conditions and projecting onto $\mathcal{U}$ and $\mathcal{P}$ yields the optimal feedback.
\end{remark}

\begin{remark}[Riccati specialization]
If $V(t,x)$ is quadratic in $(m,v)$, the feedback is linear and the HJBI \cref{eq:HJBI} reduces to coupled Riccati equations in time. This observation motivates the quadratic ansatz \cref{eq:ansatz}.
\end{remark}

\begin{remark}[state constraint]
The state-constraint boundary at $v=0$ is treated in the viscosity sense as in \cref{sect:existence}. Projections $\Pi_{\mathcal U}$ and $\Pi_{\mathcal P}$ ensure admissibility on compact action sets.
\end{remark}

\subsubsection{Quadratic value function ansatz}\label{sect:quadratic}
We proceed under the same assumptions as in \cref{sect:saddle}. Existence of saddle points and the pointwise Isaacs property are given by \Cref{theo:saddle}. While the viscosity framework guarantees existence and uniqueness in general, the LQ structure allows us to construct the solution explicitly via a quadratic ansatz, which we now pursue.

\paragraph{Quadratic candidate.}
On the interior region (\emph{i.e.}, away from the projection boundaries so that $u=-\tfrac{\eta \,\partial_m V + \kappa v}{2R_u}$ and $\pi=\tfrac{\chi\,\partial_v V}{2R}$), consider the quadratic ansatz
\begin{equation}\label{eq:ansatz}
V(t,m,v)=a_0(t)+a_1(t)m+a_2(t)v+a_{11}(t)m^2+a_{12}(t)mv+a_{22}(t)v^2,
\end{equation}
so that $\partial_m V=a_1+2a_{11}m+a_{12}v$ and $\partial_v V=a_2+a_{12}m+2a_{22}v$, with terminal conditions matching $G_mm^2+G_vv$ at $t=T$ and thus
\begin{equation}\label{eq:app_terminal}
a_{2}(T)=G_v, \;a_{11}(T)=G_m, \quad a_0(T)=a_1(T)=a_{12}(T)=a_{22}(T)=0.
\end{equation}

\paragraph{Riccati ODE system for the quadratic ansatz.}
Let $\nabla V = (\partial_m V,\partial_v V)$. Using the Isaacs Hamiltonian in \cref{eq:IsaacsH_1} and the KL-dual convention \cref{eq:kl_dual}, adversarial maximizers and control minimizers coincide with the selectors in \cref{eq:minimizers} and \Cref{theo:saddle}:
\begin{equation}\label{eq:selectors}
\theta^* = 2\lambda_m\,\partial_m V,\quad \xi^* = 2\lambda_v\,\partial_v V,\quad \Big(u^*(x),\,\pi^*(x)\Big) = \Big(\Pi_{\mathcal U}\!\big( -\tfrac{\eta\,\partial_m V + \kappa v}{2R_u} \big),\; \Pi_{\mathcal P}\!\big( \tfrac{\chi\,\partial_v V}{2R} \big)\Big),
\end{equation}
where $x=(m,v)$ and with variance drift that includes $-\chi\,\pi$.

Let $\Sigma^2:=\sigma_L^2+\sigma_c^2$. Plugging these selectors into the HJBI $-\partial_t V + H(x,\nabla V)=0$ yields a polynomial identity in $(m,v)$. Equating coefficients of like monomials in $(m,v)$ yields a coupled Riccati-type ODE system for $\{a_i(\cdot)\}$ on $[0,T]$ under the quadratic ansatz. The full system of six coupled Riccati ODEs is as follows (see Appendix~\ref{appe:D}).
\begin{align}
 \dot{a}_{0}  &= \;\Sigma^2 a_2 + \Big(\lambda_m - \tfrac{\eta^2}{4R_u}\Big) a_1^2 + \Big(\lambda_v - \tfrac{\chi^2}{4R}\Big) a_2^2 \nonumber ,\\
 \dot{a}_{1}  &= \;\Sigma^2 a_{12} + \Big(4\lambda_m - \tfrac{\eta^2}{R_u}\Big) a_1 a_{11} + \Big(2\lambda_v - \tfrac{\chi^2}{2R}\Big) a_2 a_{12}\nonumber ,\\
 \dot{a}_{2}  &= \;\bar{w}_2 - 2\beta a_2 + 2\Sigma^2 a_{22} + \Big(2\lambda_m - \tfrac{\eta^2}{2R_u}\Big) a_1 a_{12} + \Big(4\lambda_v - \tfrac{\chi^2}{R}\Big) a_2 a_{22} - \tfrac{\eta\kappa}{2R_u} a_1 \label{eq:Riccati},\\
 \dot{a}_{11} &= \;w_1 + \Big(4\lambda_m - \tfrac{\eta^2}{R_u}\Big) a_{11}^2 + \Big(\lambda_v - \tfrac{\chi^2}{4R}\Big) a_{12}^2\nonumber ,\\
 \dot{a}_{12} &= \;-2\beta a_{12} - \tfrac{\eta\kappa}{R_u} a_{11} + \Big(4\lambda_m - \tfrac{\eta^2}{R_u}\Big) a_{11} a_{12} + \Big(4\lambda_v - \tfrac{\chi^2}{R}\Big) a_{12} a_{22} \nonumber,\\
 \dot{a}_{22} &= \;-4\beta a_{22} - \tfrac{\kappa^2}{4R_u} - \tfrac{\eta\kappa}{2R_u} a_{12} + \Big(\lambda_m - \tfrac{\eta^2}{4R_u}\Big) a_{12}^2 + \Big(4\lambda_v - \tfrac{\chi^2}{R}\Big) a_{22}^2\nonumber.
\end{align}

\begin{remark}
The system governs the interior (unconstrained) regime. When projections are active, the selectors in \cref{eq:selectors} become piecewise, and the coefficient dynamics are piecewise with switching times determined by the projection boundaries. The scaling is consistent with the KL-dual convention \cref{eq:kl_dual} in which the running penalty adds $-\tfrac{\theta^2}{4\lambda_m}$ and $-\tfrac{\xi^2}{4\lambda_v}$ to the Hamiltonian, yielding the multipliers in the system. The constant variance drift $\Sigma^2$ enters linearly and affects only the equations for $a_0,a_1,a_2$.
\end{remark}

\begin{theorem}[quadratic verification by cross-reference]
Let $V$ be given by the quadratic ansatz \cref{eq:ansatz} with coefficients ${a_i(\cdot)}$ on $[0,T]$ and terminal condition \cref{eq:app_terminal}. Suppose the Riccati system in \cref{eq:Riccati} admits a $C^1$ solution on $[0,T]$ with at most polynomial growth and that the assumptions hold (compact convex action sets, measurable selectors, Lipschitz dynamics, and Isaacs condition). Define feedback $(u^*,\pi^*;\theta^*,\xi^*)$ as in \cref{eq:selectors} with projections onto the admissible sets. Then $V$ is a viscosity solution of the HJBI with terminal condition $V(T,m,v)=G_mm^2+G_vv$, and
\[V(t,x) \,=\, \inf_{u,\pi}\,\sup_{\theta,\xi} J_{t,x}(u,\pi;\theta,\xi) \,=\, \sup_{\theta,\xi}\,\inf_{u,\pi} J_{t,x}(u,\pi;\theta,\xi),\]
with $(u^*,\pi^*;\theta^*,\xi^*)$ forming a saddle point.
\end{theorem}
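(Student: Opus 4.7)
The plan is to show that the quadratic candidate $V$ built from the Riccati coefficients is a classical (hence viscosity) solution of the HJBI with terminal datum $g$, and then invoke the verification theorem (\Cref{theo:veri}) together with the saddle-point theorem (\Cref{theo:saddle}) to conclude. Since $V$ is a polynomial in $(m,v)$ with $C^1$ time-dependent coefficients, it lies in $C^{1,\infty}([0,T]\times\mathbb{R}\times\mathbb{R}_+)$ and has quadratic growth, so viscosity sub/supersolution testing reduces to pointwise PDE checking. The cost of the approach is therefore concentrated in (i) the coefficient-matching algebra and (ii) a careful treatment of the projection boundaries.

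The first step is to verify the terminal condition: the prescribed data \cref{eq:app_terminal} give $V(T,m,v)=G_m m^2+G_v v=g(m,v)$ directly. The second step is to check the PDE $-\partial_t V+H(x,\nabla_x V)=0$ pointwise on the interior regime. I would compute $\partial_t V$ as the sum $\dot a_0+\dot a_1 m+\dot a_2 v+\dot a_{11}m^2+\dot a_{12}mv+\dot a_{22}v^2$, evaluate $\partial_m V$ and $\partial_v V$ from the ansatz \cref{eq:ansatz}, apply the KL-dual identity \cref{eq:kl_dual} to contribute $\lambda_m(\partial_m V)^2+\lambda_v(\partial_v V)^2$ from the adversary, and substitute the interior selectors \cref{eq:minimizers} for $(u,\pi)$. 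After this back-substitution the HJBI reduces to a polynomial identity in $(m,v)$, and equating coefficients of $\{1,m,v,m^2,mv,v^2\}$ reproduces exactly the six coupled ODEs in \cref{eq:Riccati}; these hold by hypothesis, so the PDE is satisfied classically in the interior regime. Because classical solutions are viscosity solutions, the viscosity hypothesis of \Cref{theo:veri} is met.

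Next, I apply \Cref{theo:veri}: $V\in C([0,T]\times(\mathbb{R}\times\mathbb{R}_+))$ with polynomial growth is a viscosity solution of the HJBI with matching terminal data, the Isaacs condition holds by \Cref{theo:saddle}, and measurable minimizers exist by compactness of $\mathcal U\times\mathcal P$ and continuity of the Hamiltonian integrand; the verification theorem then identifies $V$ with the robust value function, and the Hamiltonian minimizers are optimal feedback. \Cref{theo:saddle} lifts the pointwise saddle-point structure to a saddle point of the differential game, yielding the double equality $V(t,x)=\inf_{u,\pi}\sup_{\theta,\xi} J_{t,x}=\sup_{\theta,\xi}\inf_{u,\pi} J_{t,x}$ with $(u^*,\pi^*;\theta^*,\xi^*)$ attained by the selectors in \cref{eq:selectors}. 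Uniqueness of the value in the polynomial-growth class, already guaranteed by \Cref{theo:comparison}, closes the loop.

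The main obstacle is reconciling the smooth quadratic ansatz with the projection operators $\Pi_{\mathcal U},\Pi_{\mathcal P}$: the Riccati derivation presumes the interior selectors, but whenever $u^{\mathrm{fb}}$ or $\pi^{\mathrm{fb}}$ saturates, the minimum in the Hamiltonian is attained at a boundary point and the reduced Hamiltonian becomes piecewise-affine in the relevant gradient, so the PDE is no longer a single polynomial identity and a global quadratic $V$ cannot satisfy it exactly in the saturating cell. The cleanest resolution is to restrict attention, as already flagged in the simulation caveat of \cref{sect:model}, to parameter regimes in which the Riccati coefficients stay bounded and the feedback remains interior along the optimal trajectory; there, strict convexity of the Hamiltonian integrand in $(u,\pi)$ (from $R_u,R>0$ and $w_2(u)>0$) ensures the constrained minimum coincides with the metric projection of the interior selector, and the interior PDE identity carries through. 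For the fully constrained case, I would replace the global classical verification by a piecewise viscosity argument across the switching surfaces, verifying the sub- and supersolution inequalities on each side using the appropriate one-sided test functions and appealing to the comparison principle on each cell; this step is the only one that is not purely algebraic.
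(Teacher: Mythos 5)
Your proposal is correct and follows essentially the same route as the paper's proof: substitute the quadratic ansatz into the HJBI, match coefficients of $\{1,m,v,m^2,mv,v^2\}$ to recover the Riccati system \cref{eq:Riccati}, and then cross-reference \Cref{theo:veri} and \Cref{theo:saddle} to identify $V$ with the robust value and obtain the saddle point. Your additional discussion of the projection/saturation boundaries is a welcome refinement of a caveat the paper relegates to remarks, but it does not change the argument.
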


\begin{proof}
By \Cref{theo:veri} and the Isaacs/saddle-point result in \Cref{theo:saddle}. The selectors \cref{eq:selectors} realize the Hamiltonian extremizers. Plugging them into the HJBI yields the polynomial identity in $(m,v)$, whose coefficient matching is equivalent to deriving \cref{eq:Riccati}. Hence $V$ solves the HJBI in the viscosity sense with the stated terminal condition, and the value identity with the saddle point follows by Isaacs’ condition.
\end{proof}

\begin{proposition}[global existence and breakdown threshold]\label{prop:stability}
The Riccati system \cref{eq:Riccati} admits a unique bounded solution on $[0,T]$ for any time horizon $T > 0$ if and only if the adversary parameters satisfy the stability conditions:
\begin{equation}
4\lambda_m \le \frac{\eta^2}{R_u} \quad \text{and} \quad 4\lambda_v \le \frac{\chi^2}{R}.
\end{equation}
If these conditions are strictly satisfied, the solution remains bounded. If either condition is strictly violated (\emph{i.e.,} $4\lambda_m > \frac{\eta^2}{R_u}$ or $4\lambda_v > \frac{\chi^2}{R}$), there exists a critical horizon $T^* < \infty$ such that for any $T > T^*$, the solution to the Riccati system explodes, implying $V(0,m,v) = +\infty$ and the non-existence of a finite-cost robust control policy.
\end{proposition}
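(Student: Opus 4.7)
The proposition has two directions. Sufficiency asserts that the stability conditions $c_1 := 4\lambda_m - \eta^2/R_u \le 0$ and $c_2 := 4\lambda_v - \chi^2/R \le 0$ guarantee uniformly bounded solutions of \cref{eq:Riccati} on $[0,T]$ for any $T>0$; necessity asserts that strict violation of either produces finite-time blow-up. The signs of $c_1$ and $c_2$ control the quadratic self-terms in the Riccati ODEs for $a_{11}$ and $a_{22}$, so the proof isolates each channel via scalar Riccati theory and a matrix-Riccati recast. Before either direction, I would reduce the system: inspection of \cref{eq:Riccati} shows that $(a_{11}, a_{12}, a_{22})$ is a closed autonomous subsystem, while $(a_0, a_1, a_2)$ obey linear inhomogeneous ODEs driven by polynomial functions of the quadratic triple, so by Gronwall the full analysis reduces to the quadratic subsystem. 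The structural identities $\lambda_m - \eta^2/(4R_u) = c_1/4$ and $\lambda_v - \chi^2/(4R) = c_2/4$ further expose $c_1, c_2$ as the unique drivers of nonlinearity throughout \cref{eq:Riccati}.

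For necessity, suppose $c_1 > 0$ (the case $c_2 > 0$ is symmetric). Pass to reverse time $\tau = T - t$ with $A_{ij}(\tau) := a_{ij}(T-\tau)$ and initial data $(G_m, 0, 0)$. The $A_{11}$-equation reads $A_{11}'(\tau) = -w_1 - c_1 A_{11}^2 - (c_2/4)\, A_{12}^2$. When $c_2 \ge 0$, we have $A_{11}' \le -w_1 - c_1 A_{11}^2$, and comparison with the scalar Riccati $y' = -w_1 - c_1 y^2$, $y(0) = G_m$, whose closed-form solution diverges to $-\infty$ at
\[\tau^* \,=\, \frac{1}{\sqrt{c_1 w_1}}\Bigl(\frac{\pi}{2} + \arctan(G_m \sqrt{c_1/w_1})\Bigr),\]
yields $A_{11}(\tau) \le y(\tau) \to -\infty$, so no continuation to $t = 0$ exists for $T > \tau^*$. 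When $c_2 < 0$, the coupling term $-(c_2/4) A_{12}^2$ is non-negative and the simple comparison fails; the argument then proceeds by a bootstrap: if the full triple remained bounded on $[0, T]$, one could absorb the coupling into a constant $K \ge 0$ to obtain $A_{11}' \le -w_1 + K - c_1 A_{11}^2$ and a shifted-Riccati blow-up, contradicting boundedness. The case $c_2 > 0$ is handled symmetrically through the $A_{22}$ equation, and the critical horizon is $T^* := \min(\tau^*_1, \tau^*_2)$. By the verification theorem, blow-up of any coefficient at $t = 0$ implies $V(0, m, v) = +\infty$.

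For sufficiency under $c_1, c_2 \le 0$, my plan is to recast the autonomous triple as the symmetric matrix Riccati $\dot P + \mathcal A^\top P + P \mathcal A + \mathcal Q - P M P = 0$, where $P := \bigl(\begin{smallmatrix} a_{11} & a_{12}/2 \\ a_{12}/2 & a_{22}\end{smallmatrix}\bigr)$, $M := \operatorname{diag}(-c_1, -c_2) \succeq 0$ under the stability conditions, $\mathcal A$ collects the drift terms ($-2\beta$, $-\eta\kappa/R_u$), and $\mathcal Q$ collects the constant source ($w_1$, $\bar w_2$, $-\kappa^2/(4R_u)$). With $M \succeq 0$, this is a classical LQR-type matrix Riccati; global existence on $[0,T]$ for any $T$ follows by standard arguments via an upper barrier constructed from the stabilizing solution of the associated algebraic Riccati equation and monotonicity of the Riccati flow in terminal data. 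The boundary cases $c_1 = 0$ or $c_2 = 0$ degenerate the corresponding diagonal block to a linear ODE with polynomial-bounded forcing, still globally bounded.

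The principal obstacle is the sufficiency step. The $a_{12}$ cross-coupling and the bilinear $\kappa$-terms in $\dot a_{12}, \dot a_{22}$ prevent a purely diagonal decomposition, and the constant source $\mathcal Q$ contains the indefinite entry $-\kappa^2/(4R_u)$, which is incompatible with the pristine positive-semidefinite LQR setup. A refined barrier must accommodate this, either by absorbing the $\kappa$-terms into a shifted Riccati $\tilde P := P - P_\kappa$ whose shifted source is positive semidefinite, or by a Lyapunov argument exploiting the mean-reverting $-4\beta a_{22}$ term to dissipate the $\kappa$-contribution, possibly under a mild compatibility condition on $\kappa$ relative to $(R_u, \bar w_2, \beta)$. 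Necessity, by contrast, is comparatively routine once the scalar Riccati upper bound is in hand, with the bootstrap covering the mixed-sign case.
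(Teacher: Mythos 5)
Your necessity argument is the same core mechanism as the paper's proof: isolate the closed $(a_{11},a_{12},a_{22})$ subsystem and compare the $a_{11}$ (resp.\ $a_{22}$) equation against the scalar Riccati ODE $\dot y = w_1 + c_1 y^2$, which blows up at a finite backward time. You execute it more carefully than the paper does -- the reverse-time formulation, the explicit formula for $\tau^*$, and the correct direction of the comparison (the backward solution is dominated \emph{above} by the tangent solution and escapes to $-\infty$; the paper's ``bounded from below'' and its $\tan(\sqrt{w_1 C_m}(T-t)+c)$ formula have the sign reversed). You also correctly flag that the one-line comparison silently requires the coupling coefficient $c_2/4=\lambda_v-\chi^2/(4R)$ to be nonnegative, a case distinction the paper does not make. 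However, your bootstrap for the mixed-sign case does not close as stated: from boundedness of the triple you deduce $A_{11}'\le -w_1+K-c_1A_{11}^2$, but when $K>w_1$ the comparison ODE $y'=(K-w_1)-c_1y^2$ has a finite stable equilibrium, so an upper bound by a bounded function produces no contradiction with boundedness. To force blow-up there you need a \emph{lower} bound on $-A_{11}'$ (e.g., a quantitative estimate of $A_{12}^2$ in terms of $A_{11}$ via the $\dot a_{12}$ equation, or a continuity/perturbation argument in $\kappa$, noting that $a_{12}\equiv 0$ when $\kappa=0$), not merely an upper bound on $A_{11}$.

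On sufficiency, you are right that this is the hard half, and you should be aware that the paper's own proof does not address it at all: it only proves the blow-up direction, so the ``if'' part of the stated equivalence is unproved there too. Your matrix-Riccati recast is the natural route, and your diagnosis of the obstruction is accurate -- after eliminating $u$, the effective running cost acquires the indefinite entry $-\kappa^2/(4R_u)$ in the $v^2$ slot together with the bilinear $\eta\kappa$ couplings, so the clean LQR barrier (interpreting the Riccati flow as the value of an auxiliary problem with sign-definite cost) does not apply verbatim, and global boundedness for every $T$ plausibly requires a compatibility condition on $\kappa$ relative to $(\bar w_2,\beta,R_u)$. As written, then, neither your proposal nor the paper establishes sufficiency; your contribution is to make that gap explicit and to propose the correct framework for closing it. If you pursue it, the cleanest completion is the game-theoretic two-sided bound: under $c_1,c_2\le 0$ the interior Riccati solution is the value of the unconstrained auxiliary LQ game, bounded above by inserting a fixed admissible control and bounded below by the adversary's concave subproblem, with the $\kappa$-term dominated by the $\bar w_2 v$ and $-2\beta v$ contributions.
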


\begin{proof}
The Riccati equations for the second-order coefficients $a_{11}, a_{12}, a_{22}$ form a closed subsystem. Consider the $a_{11}$ equation:
\[\dot{a}_{11} = w_1 + \left(4\lambda_m - \tfrac{\eta^2}{R_u}\right) a_{11}^2 + \left(\lambda_v - \tfrac{\chi^2}{4R}\right) a_{12}^2.\]
Let $C_m = 4\lambda_m - \frac{\eta^2}{R_u}$. When the stability condition is violated ($C_m > 0$), the coefficient $a_{11}(t)$ is bounded from below by the solution of the comparison ODE $\dot{y} = w_1 + C_m y^2$ with $y(T) = G_m$. The solution to this ODE is of the form $y(t) \propto \tan\left(\sqrt{w_1 C_m}(T-t) + c\right)$. 

The tangent function exhibits a vertical asymptote at a finite backward time distance $T^*$ determined by the terminal value and coefficients. Consequently, for horizons $T > T^*$, the solution $a_{11}(t)$ explodes, causing the value function to become infinite. A similar argument applies to $a_{22}$ when $4\lambda_v > \frac{\chi^2}{R}$. This establishes the non-existence of finite-cost robust control when the stability conditions are violated.
\end{proof}

\begin{remark}[interior vs constrained regimes]
The Riccati ODEs in \cref{eq:Riccati} govern the interior region where projections are inactive. Under Lipschitz state dynamics and compact action sets, admissible controls generate unique absolutely continuous trajectories, and polynomial growth of $V$ ensures integrability. In extensions with noise, viscosity well-posedness of the robust HJBI follows under standard comparison hypotheses (\emph{e.g.,} continuity, properness, and structural conditions).
\end{remark}

\begin{remark}[piecewise dynamics in saturated regimes]
When a control saturates at its boundary, the structure of the Riccati system changes because the minimization (infimum over $(u, \pi)$) in \cref{eq:IsaacsH} is replaced by boundary evaluation.
\begin{enumerate}
\item When $\pi_t = \pi_{\max}$, the optimal control becomes constant, and the term $-\frac{(\partial_v V \chi)^2}{4R}$ in the optimized Hamiltonian is replaced by $R\pi_{\max}^2 - \partial_v V \chi \pi_{\max}$. This alters the Riccati coefficients in \cref{eq:Riccati}. For instance, the quadratic self-interaction term $-\frac{\chi^2}{R}a_{22}^2$ in the $\dot{a}_{22}$ equation is removed, and linear terms in $a_{22}$ are modified.
\item Analogous changes occur when $u_t$ saturates.
\end{enumerate}
While the value function $V$ remains $C^1$ across switching boundaries, solving the exact piecewise system requires tracking switching surfaces. For computational tractability, our numerical implementation in \cref{sect:simul} solves the interior Riccati system and projects the resulting controls onto $\mathcal{U}$ and $\mathcal{P}$ at each time step.
\end{remark}

\section{Simulations and robustness}\label{sect:simul}
Building on the theoretical foundations in \cref{sect:theor}, we perform simulations and assess robustness. Unless stated otherwise, the simulations in \cref{sect:simul} are implemented in two stages using the parameters in \Cref{tabl:param}.

First, we obtain the time-varying coefficients for the quadratic ansatz $V(t,m,v)$ by integrating the Riccati system \cref{eq:Riccati} backward from the terminal condition $g(m,v)=G_mm^2+G_vv$, implemented as \cref{eq:app_terminal}. We employ an implicit, L-stable (Laplace transform stable) Runge-Kutta method of Radau IIA type to handle the Riccati equations.

Second, we propagate the forward dynamics using an explicit Euler scheme on a uniform grid with step $\Delta t$. At each step, we compute the unconstrained selectors using the value gradients $\partial_m V$ and $\partial_v V$, and project the controls onto their admissible sets $\mathcal U$ and $\mathcal P$. The variance is strictly enforced to be non-negative by flooring $v_t$ at zero. Adversarial distortions $(\theta_t, \xi_t)$ are computed via their KL-dual selectors, given by $\theta^*_t = 2\lambda_m \partial_m V$ and $\xi^*_t = 2\lambda_v \partial_v V$.

\begin{table}[htbp!]\centering
\begin{tabular}{@{}lll@{}}
\toprule
Parameters & Values & Description \\
\midrule
$(\beta, \eta, \chi, \sigma_L, \sigma_c)$ & (0.25, 0.8, 0.5, 0.4, 0.3) & System parameters \\
$(w_1, \bar{w}_2, \kappa, R_u, R)$ & (0.1, 0.5, 0.05, 0.5, 0.25) & Cost parameters \\
$(G_m, G_v)$ & (0.5, 0.5) & Terminal cost weights \\
$(\lambda_m, \lambda_v)$ & (0.02, 0.02) & Adversary strength parameters \\
$(u_{\min}, u_{\max}, \pi_{\max})$ & (-1.0, 1.0, 10.0) & Control bounds \\
$(T, \Delta t)$ & (10.0, 0.001) & Time parameters \\
$(m_0, v_0)$ & (0.5, 1.0) & Initial conditions \\
\bottomrule
\end{tabular}
\caption{Simulation parameters of the baseline model.}\label{tabl:param}
\end{table}

The baseline parameters in \Cref{tabl:param} are chosen to ensure the robust control problem is well-posed. A bounded solution to the Riccati system requires the stabilizing control-cost terms to exceed the destabilizing adversarial terms. Our parameters strictly satisfy these conditions: $\frac{\eta^2}{R_u} = 1.28 > 4\lambda_m = 0.08$ for the mean and $\frac{\chi^2}{R} = 1.00 > 4\lambda_v = 0.08$ for the variance, guaranteeing a stable interior solution and allowing us to analyze the system's response as $\lambda$ increases.

\begin{remark}[robustness-breakdown threshold]\label{rema:break}
The stability conditions determine the \emph{robustness-breakdown} threshold (see Proposition~\ref{prop:stability}). If the adversary's strength becomes too large ($4\lambda_m > \frac{\eta^2}{R_u}$ or $4\lambda_v > \frac{\chi^2}{R}$), the coefficient on the corresponding quadratic term in the Riccati system \cref{eq:Riccati} ($a_{11}^2$ in the $\dot{a}_{11}$ equation or $a_{22}^2$ in the $\dot{a}_{22}$ equation) becomes positive. Solving backward from a finite terminal condition, such a solution explodes to infinity in finite time, implying that no finite-cost optimal policy exists. Thus, the breakdown threshold is linked to the critical value $\lambda^*$, where the coefficient changes sign from negative to positive, and the robust control problem ceases to have a well-posed solution.
\end{remark}

\begin{remark}[baseline parameters]\label{rema:param}
Beyond stability, the baseline parameters shape the Riccati dynamics and optimal feedback structure. The control effectiveness ratio $\frac{\eta^2}{R_u} = 1.28 > \frac{\chi^2}{R} = 1.00$ implies that the mean channel dominates in the Riccati coefficients, explaining the system's robustness to mean distortions but vulnerability to variance ambiguity. The cost weights $\bar{w}_2 = 0.5 > w_1 = 0.1$ weight variance reduction more heavily, which amplifies the $a_2$ and $a_{22}$ coefficients and drives active coordination between instruments. The terminal weights $G_m = G_v = 0.5$ impose symmetric penalties on final deviations. The mean-reversion $\beta=0.25$ yields an autonomous variance decay rate $2\beta = 0.5$, which balances against the noise injection $\Sigma^2 = \sigma_L^2 + \sigma_c^2 = 0.25$. The coupling $\kappa=0.05$ introduces cross-terms in the Hamiltonian linking $v_t$ to the policy rate optimization, but remains small relative to $\bar{w}_2$ to preserve near-separability. Finally, the baseline adversary strengths $\lambda_m=\lambda_v=0.02$ provide substantial headroom below the breakdown thresholds $\lambda_m^* = 0.32$ and $\lambda_v^* = 0.25$, enabling exploration of the transition to loss of control.
\end{remark}

\begin{remark}[over-monitoring and state constraints]\label{rema:over}
A structural consequence of the unconstrained formulation is \emph{over-monitoring}, which arises because the interior Riccati solution yields a global quadratic value function without enforcing the state constraint at $v=0$. A fully constrained formulation would require solving the HJBI equation with state-constraint boundary conditions in the viscosity sense of Soner~\cite{Soner1986II}, introducing regime-dependent dynamics that reduce $\pi_t$ once $v_t$ reaches zero. However, because the constraint introduces endogenous regime switching, the backward Riccati system and forward state dynamics must be solved jointly, necessitating numerical methods. We retain our unconstrained formulation to preserve analytical tractability, as it yields explicit feedback laws that transparently illustrate the coupling between monetary and supervisory policies.\footnote{CBs typically conduct bank monitoring even when a bank's current liquidity conditions are sound, and our formulation is consistent with this institutional practice.}
\end{remark}

\begin{remark}[scope of over-monitoring effects]\label{rema:over_scope}
Over-monitoring primarily inflates the total cost $J$ and control saturation metrics. State trajectories $(m_t, v_t)$ and peak adversarial distortions are largely unaffected, as the variance floor at $v_t = 0$ effectively captures the stabilized dynamics. Robustness-breakdown thresholds (determined by Riccati stability) are also unaffected. However, the cost impact is parameter-dependent: high-$\chi$ regimes, where the system reaches the boundary earlier and remains there longer, are disproportionately affected. Consequently, while the structural phase transitions in the loss-of-control analysis (\Cref{figu:loss}) remain valid, the iso-cost contours in high-$\chi$ regions reflect this inefficiency.
\end{remark}

\subsection{Path simulations}\label{sect:4.1}
We simulate finite-horizon paths of $(m_t, v_t, u_t, \pi_t)$ under baseline parameters for different levels of adversary strength, as in \Cref{algo:paths}.

\begin{algorithm}[htbp!]
\caption{System dynamics (\Cref{figu:path})}\label{algo:paths}
\begin{algorithmic}[1]
\State \textbf{Input:} Parameter set $p$, horizon $T$, step $\Delta t$, grid $t_n = n\Delta t$.
\State \textbf{Output:} Trajectories of $(m, v, u, \pi)$ and distortions $(\theta, \xi)$.
\Function{Simulate}{$p$}
    \State \textbf{Backward pass:}
    \State Solve Riccati system (\cref{eq:Riccati}) for $a(t)$ on grid using Radau IIA.    
    \State \textbf{Forward pass:}
    \State Initialize $(m_0, v_0)$.
    \For{$n = 0, \ldots, N-1$}
        \State Evaluate gradients $\partial_m V, \partial_v V$ using $a(t_n), m_n, v_n$.
        \State Compute distortions: $\theta_n = 2\lambda_m \partial_m V$, $\xi_n = 2\lambda_v \partial_v V$.
        \State Compute unconstrained controls $(u^{\text{unc}}, \pi^{\text{unc}})$ via \cref{eq:selectors}.
        \State \textbf{Project:} $u_n \leftarrow \text{clip}(u^{\text{unc}}, u_{\min}, u_{\max})$, $\pi_n \leftarrow \text{clip}(\pi^{\text{unc}}, 0, \pi_{\max})$.
        \State \textbf{Update:} 
        \State $\quad m_{n+1} \leftarrow m_n + [ \eta u_n + \theta_n ] \Delta t$
        \State $\quad v_{n+1} \leftarrow \max\big(0, v_n + [ -2\beta v_n + \Sigma^2 + \xi_n - \chi \pi_n ] \Delta t\big)$
    \EndFor
    \State \Return Trajectories $\{(m_n, v_n, u_n, \pi_n, \theta_n, \xi_n)\}_{n=0}^N$.
\EndFunction
\end{algorithmic}
\end{algorithm}

\begin{figure}[htbp!] \centering\includegraphics[width=0.7\linewidth]{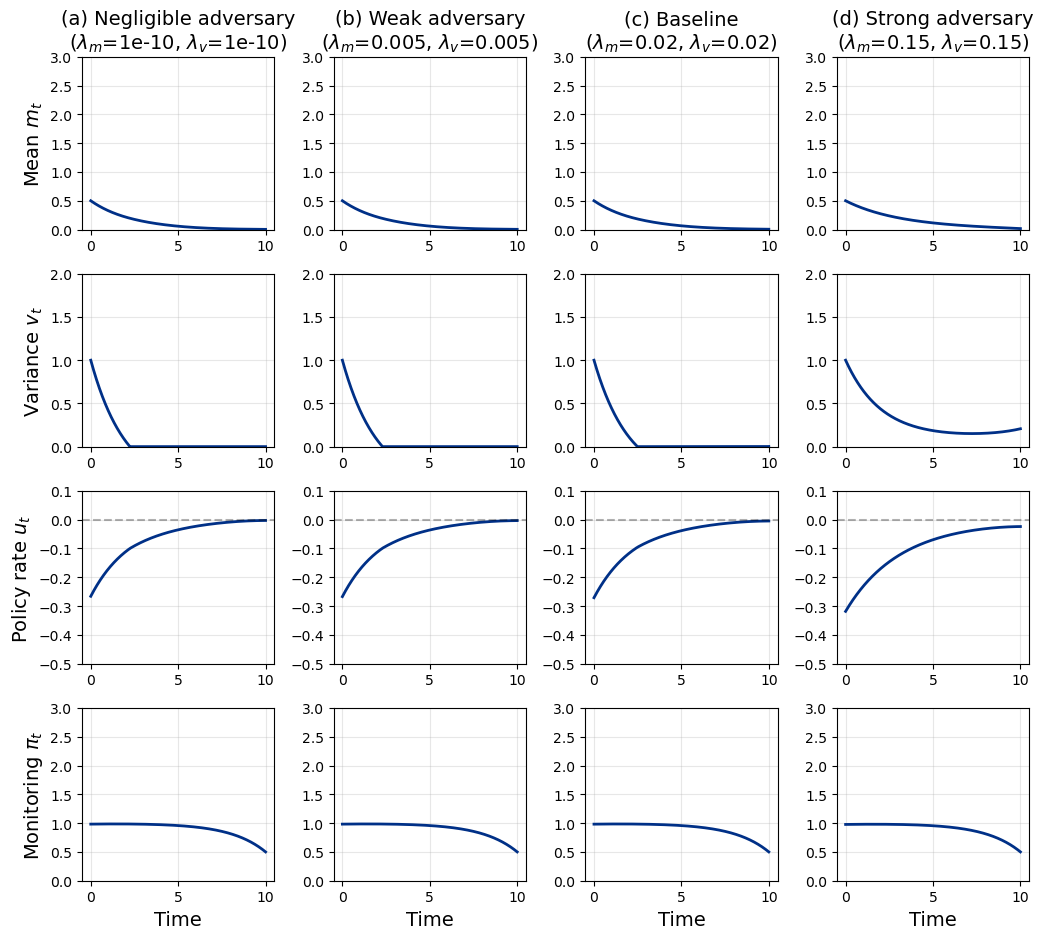}\caption{System dynamics under different levels of adversary strength. Panels show trajectories for mean liquidity $m_t$, variance $v_t$, policy rate $u_t$, and monitoring intensity $\pi_t$. As $\lambda$ increases, $v_T$ is pushed upward, settling at a non-zero steady state in the strong-adversary case. Note in panels (a) to (c) that monitoring $\pi_t$ remains positive even after variance $v_t$ reaches zero, illustrating the over-monitoring discussed in Remarks~\ref{rema:over} and~\ref{rema:over_scope}.}\label{figu:path}
\end{figure}

We call \textsc{Simulate} with four adversary strength scenarios: Negligible ($\lambda_m=\lambda_v=10^{-10}$), Weak ($\lambda_m=\lambda_v=0.005$), Baseline ($\lambda_m=\lambda_v=0.02$), and Strong ($\lambda_m=\lambda_v=0.15$), producing the trajectories shown in \Cref{figu:path}.

Initially, the mean $m_0 = 0.5$ reflects a liquidity shortage in the banking sector. The policy rate $u_t$ starts around $-0.3$, an accommodative stance consistent with addressing this shortage. The variance $v_0 = 1.0$ indicates heterogeneity in liquidity positions across banks. The monitoring intensity $\pi_0$ begins close to $1$, reflecting the CB’s heightened monitoring in response to this initial stress ($v_0=1.0$).

In \Cref{figu:path}, the mean liquidity $m_t$ is steered from its initial value of $0.5$ toward its terminal value $m_T$, driven by the policy rate $u_t$ which relaxes from its initial aggressive stance. The monitoring intensity $\pi_t$ decreases from its high initial value, steering the variance downward. As adversary strength $\lambda$ increases, the final states for both mean and variance $(m_T, v_T)$ become progressively higher. This effect is most clear in \Cref{figu:path}(d), where the variance settles at a non-zero value, consistent with the optimal equilibrium described in Remark~\ref{rema:terminal}. With a strong adversary, the total upward pressure on variance ($\Sigma^2 + \xi_t^*$) exceeds the CB's downward control force ($\chi\pi_t^*$) near the terminal time. Consequently, $v_t$ settles at an equilibrium where the optimal monitoring condition holds ($2R\pi_t^* = \chi \partial_v V$) and the variance drift balances to zero ($2\beta v_t = \Sigma^2 + \xi_t^* - \chi \pi_t^*$).

\subsection{Adversary strength analysis}\label{sect:4.2}
We sweep the adversary strength parameters $(\lambda_m, \lambda_v)$ and track indicators of control effectiveness, as in \Cref{algo:adv_1}. $\lambda_m$ and $\lambda_v$ govern the adversary's capacity to distort the mean and variance channels, respectively. For any parameter tuple, we simulate state and control paths $(m_t, v_t, u_t, \pi_t)$ over a finite horizon.

\begin{algorithm}[htbp!]
\caption{Adversary strength analysis (\Cref{figu:adv_1})}\label{algo:adv_1}
\begin{algorithmic}[1]
\State \textbf{Input:} Baseline parameters $p_0$, grid size $N$.
\State \textbf{Output:} Cost and control metrics versus $\lambda$.
\State \textbf{Part 1: Symmetric analysis}
\For{$\lambda \in \text{linspace}(0, 0.2, N)$}
    \State Update: $p \leftarrow p_0$ with $\lambda_m = \lambda_v = \lambda$
    \State Simulate and store metrics
\EndFor
\State \textbf{Part 2: Asymmetric analysis}  
\State Test cases: $(\lambda_m, \lambda_v) \in \{(0.001, 0.1), (0.001, 0.2), (0.1, 0.001), (0.2, 0.001)\}$
\For{each $(\lambda_m, \lambda_v)$ pair}
    \State Update: $p \leftarrow p_0$ with specified $\lambda_m, \lambda_v$
    \State Simulate and store metrics
\EndFor
\State Generate \Cref{figu:adv_1} plots from collected metrics
\end{algorithmic}
\end{algorithm}

For visualization, we summarize performance across different levels of adversary strength by sweeping $(\lambda_m,\lambda_v)$ over orders of magnitude and plotting: total cost $J$, average controls $(\bar{u},\bar{\pi})$, and peak adversarial distortions $(\max_t |\theta_t|,\max_t |\xi_t|)$. $u_t$ remains non-positive, and its absolute value is not taken.

Our analysis of the $\lambda$ parameters is conducted within the model's stable robustness region. The coupled Riccati system in \cref{eq:Riccati} only admits a finite, bounded solution if the stabilizing control terms are stronger than the destabilizing adversarial terms (Remark~\ref{rema:break}). For our baseline parameters, this requires $\lambda_m < 0.32$ and $\lambda_v < 0.25$. We therefore restrict our $\lambda$ sweeps to the range $[0, 0.2]$ to analyze the system's behavior within this stable region, avoiding the breakdown of the Riccati solution.

\begin{figure}[htbp!] \centering\includegraphics[width=1.00\linewidth]{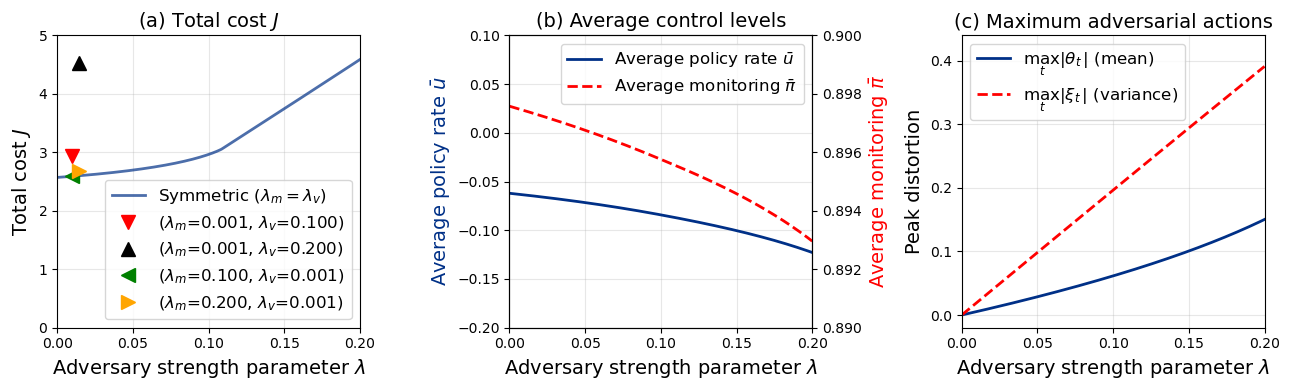}\caption{Adversary strength analysis sweeping $\lambda \in [0, 0.2]$. (a) Total cost $J$, (b) average control levels, and (c) peak adversarial distortions. Within this stable region, $J$ is primarily driven by $\lambda_v$, while no control saturation occurs (see Remarks~\ref{rema:over} and~\ref{rema:over_scope}).}\label{figu:adv_1}
\end{figure}

As $\lambda$ increases, $J$ also increases. The rise is driven primarily by the growth in $\lambda_v$, rather than $\lambda_m$ (\Cref{figu:adv_1}(a)). Both $u_t$ and $\pi_t$ adjust to $\lambda$ (\Cref{figu:adv_1}(b)). Within this range of $\lambda$, neither $u_t$ nor $\pi_t$ reach its bounds (no saturation occurs). As $\lambda$ increases, both $\max_t |\theta_t|$ and $\max_t |\xi_t|$ grow (\Cref{figu:adv_1}(c)).

We further analyze the robustness--efficiency trade-off, parameterized by the adversary strength $\lambda$ (where larger $\lambda$ implies a stronger adversary and weaker robustness), as in \Cref{algo:adv_2}.

\begin{algorithm}[htbp!]
\caption{Robustness--efficiency trade-off (\Cref{figu:adv_2})}\label{algo:adv_2}
\begin{algorithmic}[1]
\State \textbf{Input:} Grids $\{\lambda_m^i\}_{i=1}^I, \{\lambda_v^j\}_{j=1}^J$, baseline parameters $p_0$.
\State \textbf{Output:} Heatmaps and trade-off curves.
\State \textbf{Part 1: Heatmaps}
\For{$i = 1 \to I$}
    \For{$j = 1 \to J$}
        \State Update parameters: $p \leftarrow p_0$ with $\lambda_m^i, \lambda_v^j$.
        \State Simulate and extract $J[i,j], \bar{u}[i,j], \bar{\pi}[i,j], v_T[i,j]$.
    \EndFor
\EndFor
\State Generate heatmaps (panels (a) to (d))
\State \textbf{Part 2: Trade-off Curves}
\State Fix $\lambda_v = 0.02$, extract $\bar{u}(\lambda_m)$ and $\bar{\pi}(\lambda_m)$ \Comment{Panel (e)}
\State Fix $\lambda_m = 0.02$, extract $\bar{u}(\lambda_v)$ and $\bar{\pi}(\lambda_v)$ \Comment{Panel (f)}
\State Plot trade-off curves
\end{algorithmic}
\end{algorithm}

For each pair $(\lambda_m,\lambda_v)$, the procedure first obtains time-varying quadratic value coefficients, and then uses these coefficients to compute optimal controls together with the associated adversarial distortions. From the forward simulation, the simulation aggregates the total cost $J$ over the horizon, the average policy rate $\bar{u}$, the average monitoring intensity $\bar{\pi}$, and the terminal variance $v_T$. Repeating this workflow on a grid in $(\lambda_m,\lambda_v)$ maps the robustness--efficiency tradeoff. This sensitivity analysis is conducted over $100 \times 100 = 10,000$ combinations of $(\lambda_m, \lambda_v) \in [0.005, 0.2]^2$ on linearly spaced grids.

\begin{figure}[htbp!]\centering\includegraphics[width=1.00\linewidth]{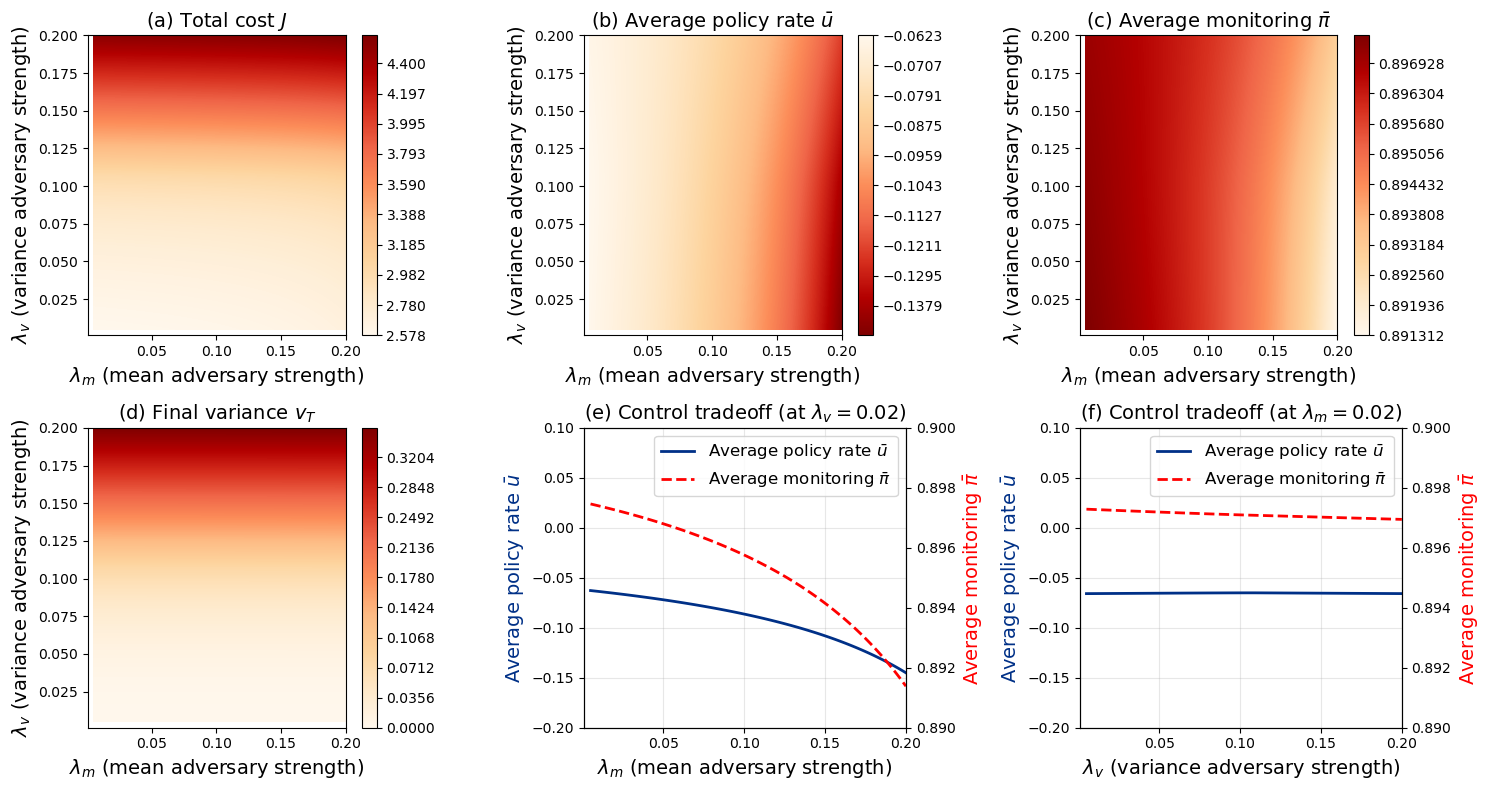}\caption{Robustness--efficiency tradeoff. (a)--(d): Heatmaps of cost, controls, and terminal variance over the $(\lambda_m, \lambda_v)$ plane. (e)--(f): Cross-sectional policy response curves fixing one adversary parameter. The heatmaps show the asymmetric policy response to $\lambda_m$ and $\lambda_v$, with $J$ increasing along $\lambda_v$ but remaining insensitive to $\lambda_m$.}\label{figu:adv_2}
\end{figure}

\Cref{figu:adv_2} summarizes the resulting robustness--efficiency landscape. The four heatmaps over $(\lambda_m, \lambda_v)$ report $J$, $\bar{u}$, $\bar{\pi}$, and $v_T$ (\Cref{figu:adv_2}(a),(b),(c),(d)). \Cref{figu:adv_2}(a) decomposes \Cref{figu:adv_1}(a). The total cost is minimized at lower $\lambda_v$ and increases as $\lambda_v$ rises, but not with $\lambda_m$ (\Cref{figu:adv_2}(a)). Both $\bar{u}$ and $\bar{\pi}$ decline with $\lambda_m$ (\Cref{figu:adv_2}(b),(c)). $v_T$ rises with $\lambda_v$ (\Cref{figu:adv_2}(d)).

The two trade-off plots show $\bar{u}$ (left axis) and $\bar{\pi}$ (right axis) (\Cref{figu:adv_2}(e),(f)). These plots decompose \Cref{figu:adv_1}(b): $\lambda_m$ varies with $\lambda_v$ fixed at $0.02$ (\Cref{figu:adv_2}(e)), and $\lambda_v$ varies with $\lambda_m$ fixed at $0.02$ (\Cref{figu:adv_2}(f)). Both $\bar{u}$ and $\bar{\pi}$ decline as $\lambda_m$ increases (\Cref{figu:adv_2}(e)), but they do not respond to increases in $\lambda_v$ (\Cref{figu:adv_2}(f)). $u_t$ does not react to $\lambda_v$ because it is the controller for the mean, and $\pi_t$ shows little response because its value is largely anchored by non-adversarial parameters ($\bar{w}_2$ and $\beta$).

Importantly, $\bar{\pi}$ decreases as $\lambda_m$ increases (\Cref{figu:adv_2}(c),(e)). When $\lambda_m$ rises, the adversarial distortion $\theta_t^* = 2\lambda_m \partial_m V$ becomes the dominant driver of instability, forcing the CB to prioritize the mean channel. This trade-off, captured by the coupled Riccati system, reduces the variance weight $w_2(u_t)$ (as $u_t$ becomes more aggressive), which in turn lowers the marginal cost of variance $\partial_v V$, which in turn reduces the optimal monitoring intensity $\pi_t^* = \frac{\chi \partial_v V}{2R}$. This may be interpreted as a resource-allocation shift at the CB.

The total cost $J$ consists essentially of $w_1m_t^2, w_2(u_t)v_t, R\pi_t^2,$ and $R_uu_t^2$. When $\lambda_v$ increases while $\lambda_m$ remains fixed, the controls $(u_t, \pi_t)$ and the mean $m_t$ stay stable, so their associated costs do not increase. However, since $\lambda_v$ increases the variance $v_t$ while $\pi_t$ remains almost unchanged, the term $w_2(u_t)v_t$ rises, thereby increasing $J$ (see Remarks~\ref{rema:over} and~\ref{rema:over_scope}).

This asymmetric sensitivity reflects a general mechanism: the system is more vulnerable to adversarial pressure on the weaker control channel. Under our baseline parameters, the mean control effectiveness $\frac{\eta^2}{R_u} = 1.28$ exceeds the variance control effectiveness $\frac{\chi^2}{R} = 1.00$, making the variance channel more exposed. Consequently, $J$ responds primarily to $\lambda_v$. The direction of this asymmetry would reverse if $\frac{\chi^2}{R} > \frac{\eta^2}{R_u}$.

\subsection{Parameter sensitivity analysis}\label{sect:4.3}
We then study how model primitives affect outcomes, as in \Cref{algo:sens}. Adversary strength parameters are held fixed at $\lambda_m = \lambda_v = 0.02$. For each parameter configuration, we compute the total cost $J$, the terminal variance $v_T$, and the saturation levels of the controls $u_t$ and $\pi_t$.

\begin{algorithm}[htbp!]
\caption{Parameter sensitivity (\Cref{figu:sens}) and control saturation analysis (\Cref{figu:satu})}\label{algo:sens}
\begin{algorithmic}[1]
\State \textbf{Input:} Baseline $p_0$ (with $\lambda_m=\lambda_v=0.02$); grids for each parameter $\vartheta \in \{\eta,\chi,\beta,\kappa,R_u,R\}$.
\State \textbf{Output:} Sensitivity curves and saturation profiles.
\For{each parameter $\vartheta$}
    \For{each value $\vartheta'$ in grid $\mathcal{G}_\vartheta$}
        \State Update parameters: $p \leftarrow p_0$ with $\vartheta \leftarrow \vartheta'$.
        \State Simulate and compute metrics: $J, v_T, \bar{u}, \bar{\pi}, S_u, S_\pi$
    \EndFor
    \State Generate panels in \Cref{figu:sens}: $J(\vartheta)$ and $v_T(\vartheta)$
    \State Generate panels in \Cref{figu:satu}: $S_u(\vartheta)$ and $S_\pi(\vartheta)$
\EndFor
\end{algorithmic}
\end{algorithm}

\begin{figure}[htbp!] \centering \includegraphics[width=0.82\linewidth]{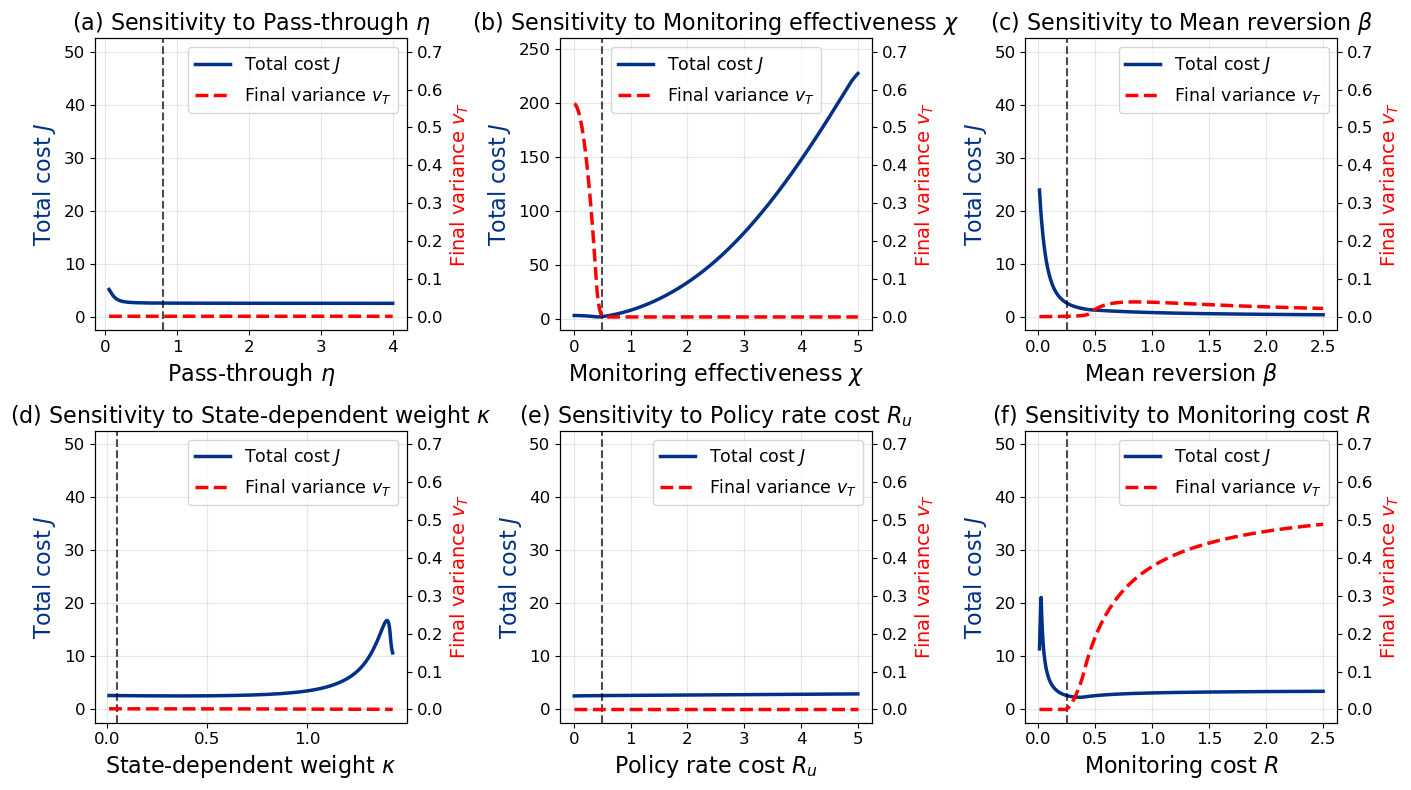}\caption{Parameter sensitivity analysis. $J$ and $v_T$ are most sensitive to $\chi$, $\beta$, and $R$. The increase in $J$ with $\chi$ (panel (b)) reflects the over-monitoring cost at the $v_t=0$ boundary (Remarks~\ref{rema:over} and~\ref{rema:over_scope}).}\label{figu:sens}
\end{figure}

In both \Cref{figu:sens,figu:satu}, the dotted vertical line in each panel indicates the baseline value of the parameter being varied. The sensitivity analysis shows that, within the tested ranges, monitoring effectiveness $\chi$, mean reversion $\beta$, and monitoring cost $R$ are the most influential parameters for both $J$ and $v_T$ (\Cref{figu:sens}(b),(c),(f)).

In particular, when $\chi$ increases, the variance drift becomes more sensitive 
to monitoring (\Cref{figu:sens}(b)). While this makes variance reduction more 
effective per unit of $\pi$, high $\chi$ drives $v_t$ to zero rapidly. At 
$v_t = 0$, the variance drift satisfies $\dot{v}_t = \Sigma^2 + \xi_t - \chi\pi_t$. 
When $\chi\pi_t > \Sigma^2 + \xi_t$, the control force exceeds noise plus 
adversarial pressure, driving variance to zero and keeping it there. Under 
baseline parameters with high $\chi$, this condition is satisfied, confirming 
that $v_t = 0$ reflects genuine control dominance.

However, once $v_t$ reaches zero, the Riccati-based feedback 
$\pi^* = \frac{\chi \partial_v V}{2R}$ continues to prescribe positive monitoring, 
but further variance reduction is impossible. The resulting over-monitoring cost 
$R\pi_t^2$ accumulates without benefit, which is the primary reason $J$ rises 
despite the improved monitoring effectiveness (see Remarks~\ref{rema:over} and~\ref{rema:over_scope}).

Across the tested ranges, the pass-through $\eta$, state-dependent weight $\kappa$, and policy rate cost $R_u$ are less influential for $J$ and $v_T$ (\Cref{figu:sens}(a),(d),(e)).

\begin{figure}[htbp!]\centering\includegraphics[width=0.8\linewidth]{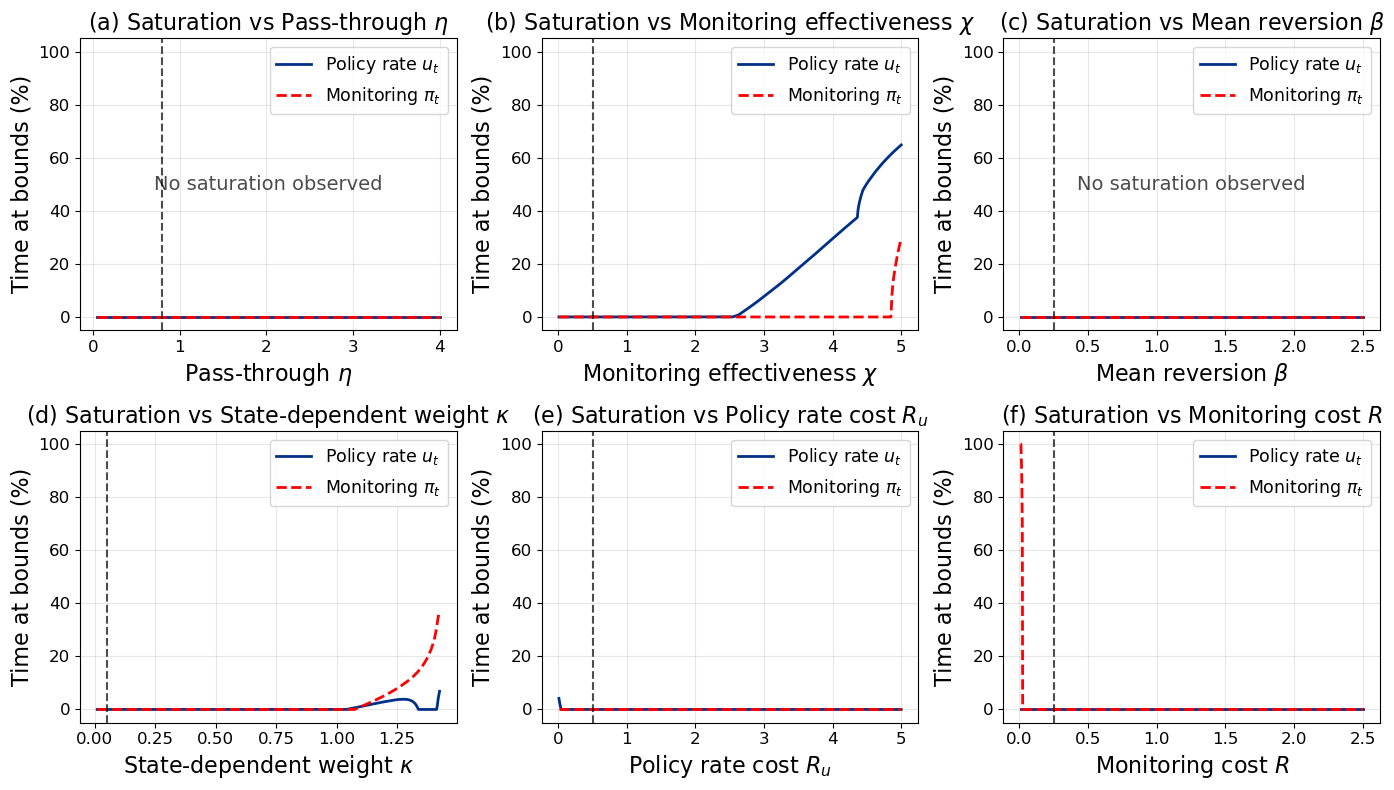}\caption{Control saturation analysis. Increasing monitoring effectiveness $\chi$ not only saturates $\pi_t$ but also drives $u_t$ to its bound.}\label{figu:satu}
\end{figure}

In the control saturation analysis, two parameters exhibit no saturation: neither $u_t$ nor $\pi_t$ reaches its bounds over the sweep (\Cref{figu:satu}(a),(c)). This indicates that the optimal controls remain interior across those ranges. Saturation of $\pi_t$ tends to arise when $\chi$ increases, $\kappa$ rises, or $R$ declines (\Cref{figu:satu}(b),(d),(f)). Saturation of $u_t$ emerges when $\chi$ increases (\Cref{figu:satu}(b)).

When $\chi$ increases, $\frac{\chi^2}{R}$ becomes stronger. The Riccati system responds by increasing the magnitude of the $a_{12}$ coupling term, which links variance $v_t$ to the mean controller $u_t$. This stronger coupling amplifies the variance contribution to the optimal policy rate $u_t^* = -\frac{\eta(a_1 + 2a_{11}m + a_{12}v) + \kappa v}{2R_u}$, creating a more negative $u_t^*$ and causing the policy rate to saturate at its lower bound $u_{\min}$.

\subsection{Loss of control}\label{sect:4.4}
We further analyze the saturation of the policy instruments identified in \cref{sect:4.3}. \Cref{figu:loss} presents the loss-of-control diagnostics over the $(\chi,\beta)$ plane, produced according to \Cref{algo:loss}. The base layer is a heatmap of total time at bounds (the share of the horizon during which either control is at its limit), over which we overlay iso-cost contours of $J$. Axes place $\chi$ on the horizontal and $\beta$ on the vertical. Adversary intensities are fixed at $\lambda_m=\lambda_v=0.02$.

\begin{algorithm}[htbp!]
\caption{Loss-of-control map (\Cref{figu:loss})}\label{algo:loss}
\begin{algorithmic}[1]
\State \textbf{Input:} Baseline $p_0$; grids for effectiveness $\chi$ and reversion $\beta$.
\State \textbf{Output:} Heatmap of saturation and iso-cost contours.
\For{each $\chi_i$ in grid, each $\beta_j$ in grid}
    \State Update $p \leftarrow p_0$ with $\chi \leftarrow \chi_i, \beta \leftarrow \beta_j$.
    \State \textbf{Stability Check:} 
    \If{Robustness-breakdown ($4\lambda_v > \frac{\chi^2}{R}$ or $4\lambda_m > \frac{\eta^2}{R_u}$)}
        \State Mark point $(i,j)$ as ``Robustness-breakdown''
    \Else
        \State Simulate and compute metrics
        \State $H_{ij} \leftarrow$ Total time at bounds $(S_u + S_\pi)$
        \State $J_{ij} \leftarrow$ Total cost
    \EndIf
\EndFor
\State Plot heatmap of $H_{ij}$ with overlaid contours of $J_{ij}$
\end{algorithmic}
\end{algorithm}

\begin{figure}[htbp!]\centering\includegraphics[width=0.65\textwidth]{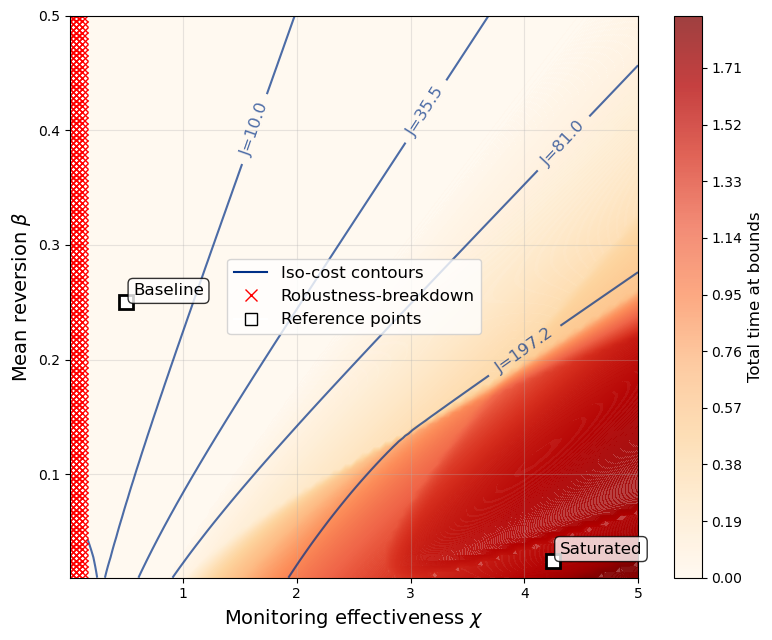}\caption{Loss-of-control map over $(\chi,\beta)$. This figure identifies the two distinct phase transitions: robustness-breakdown and control saturation. It shows total time at bounds, defined as the sum of saturation percentages for both controls ($S_u + S_\pi \in [0,2]$). Iso-cost contours overlay the saturation regions, and red crosses mark robustness-breakdown points. Note that the increasing trends in iso-cost contours in higher $\chi$ regions reflect the over-monitoring cost (Remarks~\ref{rema:over} and~\ref{rema:over_scope}).}\label{figu:loss}
\end{figure}

\Cref{figu:loss} illustrates the central trade-off between the system's inherent stability (provided by $\beta$) and the CB's monitoring effectiveness $\chi$. We identify two distinct phase transitions corresponding to a loss-of-control. First, a robustness-breakdown (the ``Robustness-breakdown'' points, see Remark~\ref{rema:break}) occurs at $\chi \le \sqrt{4\lambda_v R}$ (approximately $0.14$ under baseline $\lambda_v$ and $R$), where the CB's stabilizing control term ($\frac{\chi^2}{R}$) is too weak to offset the destabilizing adversarial term ($4\lambda_v$), preventing the Riccati system from admitting a stable solution. Second, the heatmap exhibits control saturation (the neighborhood of the ``Saturated'' point). The borderline of this saturated region represents the critical policy threshold where controls first reach their bounds. This saturation frontier is crossed when $\beta$ decreases (forcing the CB to compensate for the lack of inherent mean reversion) or when $\chi$ increases. This latter effect is twofold: increasing $\chi$ not only drives $\pi_t^*$ to its bound ($\pi_{max}$) but also causes $u_t$ to saturate at its bound through the Riccati $a_{12}$ coupling. The ``Baseline'' reference point exists in the stable, interior region. Overall, the plot shows a phase transition from interior control to high boundary usage of policy instruments as $\chi$ increases and $\beta$ declines, and the iso-cost contours broadly track these saturation gradients. At the baseline parameters, optimizing the CB's objective $J$ maintains the policy instruments within the interior region, but this property is lost as parameters approach the saturation frontier.

\section{Discussion}\label{sect:discuss}
\paragraph{Simulation results.}
Our simulations show that the total cost $J$ is highly sensitive to the variance adversary $\lambda_v$ but relatively insensitive to the mean adversary $\lambda_m$ within the stable region. This asymmetry is driven by the \emph{net stabilizing margins} in the Riccati dynamics (defined by the difference between control effectiveness and adversarial pressure: $\frac{\eta^2}{R_u} - 4\lambda_m$ versus $\frac{\chi^2}{R} - 4\lambda_v$). Under our baseline parameters, the stabilizing coefficient for the mean ($\frac{\eta^2}{R_u} = 1.28$) exceeds that of the variance ($\frac{\chi^2}{R} = 1.00$), creating a larger safety margin against adversarial distortions. Consequently, the optimal distortion $\xi_t$ induces larger state deviations in $v_t$ compared to the effect of $\theta_t$ on $m_t$. This effect is visible in the path simulations: while the baseline case is affected by over-monitoring at $v_t=0$, a strong adversary overwhelms the control effort, preventing $v_t$ from settling at zero.

The parameter sweeps reveal complex coordination between controls. For example, when the mean adversary $\lambda_m$ increases, the monitoring policy $\pi_t$ decreases as the CB shifts resources to the mean channel via the $w_2(u_t)$ coupling. Notably, when monitoring effectiveness $\chi$ is large, the total cost $J$ can increase with $\chi$ even though the value function $V(0, x_0)$ decreases. This reflects a structural limitation of applying unconstrained LQ feedback to state-constrained dynamics: high $\chi$ drives the variance $v_t$ to its lower bound of zero, where the Riccati feedback continues to prescribe positive monitoring $\pi_t > 0$ based on the positive $\partial_v V$. The resulting over-monitoring accumulates as $R\pi_t^2$ cost without yielding further variance reduction (Remark~\ref{rema:over}). This coordination effect is also observed in the saturation regimes: increasing $\chi$ not only causes $\pi_t$ to reach its bound, but effectively forces $u_t$ to saturate at its bound by strengthening the $a_{12}$ Riccati coupling, demonstrating that the two policy tools are deeply linked via the cross-terms in the value function.

Finally, our loss-of-control analysis illustrates the central trade-off between the system's inherent stability and the CB's monitoring effectiveness. We identify two distinct, structural phase transitions. First, a robustness-breakdown occurs when the control power is insufficient to offset the adversarial strength (\emph{e.g.,} when $\chi$ is low), violating the Riccati stability condition ($4\lambda_v > \frac{\chi^2}{R}$ or $4\lambda_m > \frac{\eta^2}{R_u}$). Second, control saturation emerges when the unconstrained optimal feedback exceeds the admissible bounds, a regime triggered either when high $\chi$ amplifies the feedback gain beyond $\pi_{max}$ or when low $\beta$ necessitates intervention beyond $u_{min}$.

\paragraph{Policy implications.}
An important policy implication is that monetary policy $u_t$ and bank monitoring $\pi_t$ are deeply coordinated and cannot be managed in isolation. Our model reveals critical, structural trade-offs. For instance, a strong monetary policy response to the mean adversary $\lambda_m$ optimally forces the CB to reduce $\pi_t$ as a resource-allocation shift—a direct, parameter-independent consequence of the $w_2(u_t)$ coupling. Furthermore, the robustness-breakdown is determined by the control channel with the minimum stability margin (the smaller of $\frac{\eta^2}{R_u} - 4\lambda_m$ and $\frac{\chi^2}{R} - 4\lambda_v$). Since the Riccati solution explodes if the quadratic coefficient becomes positive for either channel, the model's capacity to absorb uncertainty is limited by the tighter of these two margins. Under our baseline parameters where $\frac{\eta^2}{R_u} > \frac{\chi^2}{R}$, the variance channel has the tighter margin, creating a specific vulnerability to $\lambda_v$ that would only reverse if $\frac{\chi^2}{R}$ were increased significantly. Furthermore, increasing monitoring effectiveness $\chi$ can trigger saturation in other policy tools due to complex Riccati feedback, or lead to inefficient over-monitoring when variance reaches its lower bound, where $\pi^* \propto \partial_v V$ continues to incur quadratic costs $R\pi^2$ despite yielding zero marginal reduction in $v_t$. These findings suggest that CBs must coordinate (jointly optimize) their policy instruments, accounting for the cross-channel feedback that links these tools.

\paragraph{Limitations.}
Our analysis is subject to several limitations inherent to the LQ-MFC framework.
\begin{enumerate}
    \item Linear dynamics and quadratic objectives, while analytically transparent, may overlook nonlinearities that emerge under extreme stress.
    \item The assumption of continuous controls may ignore the discrete or binary nature of certain policy interventions, such as emergency lending facilities or bank resolution decisions.
    \item The LQ cost structure presumes symmetric penalties for deviations above and below targets, whereas CBs may face asymmetric loss functions in which undersupplying liquidity is far more costly than oversupplying it during crises.
\end{enumerate}
These limitations suggest that the phase transitions we identify may occur at lower adversary strengths, implying greater fragility than our baseline model indicates. Furthermore, our unconstrained Riccati solution may not optimally handle the state constraint $v_t \geq 0$: when variance reaches zero, the feedback policy prescribes inefficient over-monitoring (see Remarks~\ref{rema:over} and~\ref{rema:over_scope}).

\paragraph{Potential model extensions.}
Our framework serves as a tractable baseline that admits several extensions.
\begin{enumerate}
\item Incorporating \emph{jump-diffusions} would capture fire-sale shocks, where Poisson jumps model sudden asset liquidations generating discontinuous increases in cross-sectional dispersion.
\item \emph{Exogenous regime-switching} parameters could model transitions between normal and crisis states, where model coefficients depend on a finite Markov chain $k_t \in \{1,\dots,K\}$ independent of the state.
\item Introducing \emph{network-weighted mean exposure} $\sum_j \omega_{ij} L_t^j$ would capture heterogeneous spillovers and core--periphery dynamics.
\end{enumerate}

These extensions would likely reinforce our understanding that the system's true stability region may be narrower than our baseline model suggests. As detailed in our stability analysis, the system fails when the adversary's strength overpowers the control channels. While exogenous regime switching remains tractable (yielding a system of 
coupled Riccati equations), it would generally shrink the stability region, 
as the coupled solution must remain bounded across all regime configurations. Furthermore, jump-diffusions would introduce a strictly positive, uncontrolled source of dispersion ($\lambda_J \mathbb{E}[Z^2]$) that elevates the variance floor, reducing the effective buffer against destabilization. Together, these mechanisms suggest that the stable region identified in our baseline model may underestimate the system's true fragility.

\paragraph{Future research directions.}
When nonlinear monitoring costs or partial observability prevent closed-form Riccati solutions, numerical solutions to the robust MFC problem can be obtained using PG methods.\footnote{This provides a foundation for applying data-driven reinforcement learning to systemic risk management. Recent developments in PG methods include Giegrich, Reisinger, and Zhang~\cite{GiegrichReisingerZhang2024}, Reisinger, Stockinger, and Zhang~\cite{ReisingerStockingerZhang2024}, and Hambly, Xu, and Yang~\cite{HamblyXuYang2021}.} A natural extension is to formulate an MFG where individual banks optimize their liquidity positions while the CB sets aggregate policy. In this setting, the Riccati equations are replaced by a coupled forward-backward system, whose fixed point characterizes the equilibrium between banks and the CB. For recent developments in MFG formulations, see Cont and Hu~\cite{ContHu2025}. Moreover, the single CB framework can be extended to include multiple regulatory authorities, in the spirit of Veraart and Aldasoro~\cite{VeraartAldasoro2025}.\footnote{This aligns with institutional practice in Japan, where both the Financial Services Agency (primary regulator) and the Bank of Japan monitor banks.}

\section{Conclusion}\label{sect:concl}
\paragraph{Main contributions.}
This paper contributes to the systemic risk literature in financial mathematics by developing a robust LQ-MFC framework that incorporates multiple coordinated policy instruments under model uncertainty. We jointly optimize interest rate policy $u_t$ and supervisory monitoring intensity $\pi_t$ against worst-case distortions. The distinguishing feature is a variance weight $w_2(u_t) = \bar{w}_2 + \kappa u_t$ that depends on the policy rate, generating coupling effects between monetary policy and cross-sectional dispersion, creating control--variance interactions in the optimal control Hamiltonian. This coupling captures the heterogeneous transmission of monetary policy through the banking system, where higher policy rates amplify dispersion costs—a mechanism that emerges from bank heterogeneity but is absent when $\kappa = 0$.

\paragraph{Theoretical foundations.}
We establish viscosity solutions for the robust HJBI equation, prove uniqueness through comparison principles, and provide verification theorems for optimal feedback controls. The LQ structure with the coupling term $\kappa$ leads to a modified Riccati system that admits closed-form solutions despite the control--state interactions. These analytical results maintain tractability and enable practical numerical implementation.

\paragraph{Instrument complementarity.}
Our model demonstrates complementarity between interest rate and monitoring policies under model uncertainty. Monitoring primarily affects variance while interest rates target the mean, creating distinct stabilization channels. The policy rate--variance coupling through $\kappa$ generates interaction effects: monetary policy influences cross-sectional dispersion, while heterogeneity feeds back into optimal rate-setting. Under robustness concerns, this coupling becomes particularly important as the adversary optimally targets the channel with lower control effectiveness (the smaller of $\frac{\eta^2}{R_u}$ and $\frac{\chi^2}{R}$).

\paragraph{Coordinated policy responses.}
Our simulations reveal that the system's sensitivity to model uncertainty is driven by the net stabilizing margins. Under our baseline parameters, this results in a pronounced vulnerability to the variance adversary $\lambda_v$ while remaining robust to the mean adversary $\lambda_m$. This asymmetry arises from the higher effectiveness of the interest rate policy relative to monitoring. Beyond this asymmetry, we identify fundamental coordination trade-offs. For instance, an increase in $\lambda_m$ forces a decrease in monitoring $\pi_t$ as the CB reallocates resources via the $w_2(u_t)$ coupling.

We further identify two distinct loss-of-control regimes: robustness-breakdown and control saturation. A breakdown threshold exists where system stability fails when the adversary's influence overwhelms the CB's control effectiveness. Separately, control saturation arises from complex Riccati interactions, even within the stable region. For instance, increasing monitoring effectiveness $\chi$ not only drives $\pi_t$ to its bound but also raises total cost $J$ (due to over-monitoring when $v_t$ reaches zero) and pushes the policy rate $u_t$ to its bound via the $a_{12}$ Riccati coupling.

\paragraph{Possible extensions.}
Important extensions include incorporating jump processes for sudden liquidity shocks and exogenous regime-switching dynamics (see \cref{sect:discuss}). The framework developed here—combining robust control, mean-field approximations, and multiple instruments—offers a foundation for future developments.

\begin{appendices}
\addcontentsline{toc}{section}{Appendices}
\addtocontents{toc}{\protect\setcounter{tocdepth}{0}}

\section{Propagation-of-Chaos for the \emph{N}-bank system}\label{appe:A}
This appendix establishes PoC (Propagation-of-Chaos) for the $N$-bank system, justifying the mean-field approximation used in the main text (see Carmona and Delarue~\cite{CarmonaDelarue2013}). While the main text analyzes deterministic moment dynamics, the underlying bank system is stochastic with both idiosyncratic and common noise.

Consider $N$ banks with liquidity gaps following:
\[dL_t^{i,N} = \Big[ -\beta\big(L_t^{i,N} - m_t^N\big) + \eta\,u_t + \theta_t \Big]\,dt \;+\; \sigma_L\,dW_t^{i} \;+\; \sigma_c\,dB_t, \quad i=1,\dots,N,\]
where $m_t^N = \tfrac{1}{N}\sum_{j=1}^N L_t^{j,N}$ is the empirical mean, $\{W^i\}_{i=1}^N$ are i.i.d. standard Brownian motions, and $B$ is a common Brownian motion independent of all $W^i$.

Assume $(L_0^{i,N})_{i=1,\dots,N}$ are i.i.d. with $\mathbb{E}|L_0^{i,N}|^2 < \infty$ (and $\mathbb{E}|L_0^{i,N}|^4 < \infty$ for Corollary~\ref{coro:poc}), independent of $(B, W^1,\dots,W^N)$. We use the conditional (common-noise) law and mean
\[\mu_t := \mathcal{L}(L_t \mid \mathcal{F}_t^B), \quad m_t := \mathbb{E}[L_t \mid \mathcal{F}_t^B], \quad \mathcal{F}_t^B := \sigma(B_s: s\le t).\]

The controls $(u_t, \pi_t)$ and adversarial distortions $(\theta_t, \xi_t)$ are progressively measurable and bounded processes, chosen according to the robust control problem in \cref{sect:theor}. Note that $\pi_t$ affects variance but not individual dynamics directly, while $\xi_t$ enters only at the aggregate level.

\begin{assumption}[regularity for PoC]\label{assu:F}
The parameters satisfy $\beta, \eta > 0$ and $\sigma_L, \sigma_c \ge 0$. Controls are bounded: $u_t \in [u_{\min}, u_{\max}]$, $\pi_t \in [0, \pi_{\max}]$. Adversarial distortions satisfy $|\theta_t| \leq C_{\theta}$ and $|\xi_t| \leq C_{\xi}$ for constants $C_\theta, C_\xi$ determined by the KL penalty parameter $\lambda$.
\end{assumption}

Assumption~\ref{assu:F} holds under Assumption~\ref{assu:D} on the baseline parameter region considered.
\begin{lemma}[uniform second moments]
Under Assumption~\ref{assu:F}, there exists $C_T < \infty$ such that, for all $N \geq 1$:
\[\sup_{1 \leq i \leq N} \mathbb{E}\Big[\sup_{0 \leq t \leq T} |L_t^{i,N}|^2\Big] \leq C_T\big(1 + \mathbb{E}|L_0^{i,N}|^2\big).\]
\end{lemma}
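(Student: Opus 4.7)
The plan is to apply Itô's formula (or, equivalently, work from the integral form) to $|L_t^{i,N}|^2$, split the resulting expression into drift and martingale contributions, and use the symmetry of the $N$-bank system together with Grönwall's lemma. First I would write the dynamics in integral form,
\[L_t^{i,N} = L_0^{i,N} + \int_0^t\!\bigl[-\beta\,(L_s^{i,N}-m_s^N) + \eta u_s + \theta_s\bigr]ds + \sigma_L W_t^{i} + \sigma_c B_t,\]
and, using Jensen's inequality on $(a_1+\cdots+a_6)^2 \le 6\sum a_k^2$, bound $\sup_{s\le t}|L_s^{i,N}|^2$ by six pieces. The deterministic control/distortion piece is uniformly bounded by $(|\eta|(|u_{\min}|\vee|u_{\max}|)+C_\theta)^2 T^2$ thanks to Assumption~\ref{assu:F}, while for the Brownian contributions Doob's maximal inequality gives $\mathbb{E}\sup_{s\le T}|W_s^{i}|^2 \le 4T$ and similarly for $B$.

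Next I would handle the two drift-coupling terms. The self-interaction $\beta\!\int_0^t|L_s^{i,N}|ds$ squared is bounded via Cauchy--Schwarz by $\beta^2 T\!\int_0^t\!|L_s^{i,N}|^2 ds$, and for the empirical-mean term I use $|m_s^N|^2 \le \frac{1}{N}\sum_{j=1}^N|L_s^{j,N}|^2$. Taking $\sup_{s\le t}$, then expectation, and invoking exchangeability (the $(L_0^{i,N})$ are i.i.d.\ and the SDE coefficients are symmetric in $i$), the quantity $\Phi_N(t) := \sup_{1\le i\le N}\mathbb{E}\sup_{s\le t}|L_s^{i,N}|^2$ satisfies $\mathbb{E}\sup_{s\le t}|m_s^N|^2 \le \Phi_N(t)$. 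Combining the estimates yields an integral inequality of the form
\[\Phi_N(t) \;\le\; K_T\bigl(1+\mathbb{E}|L_0^{1,N}|^2\bigr) + K_T\!\int_0^t \Phi_N(s)\,ds,\]
with $K_T$ depending only on $\beta,\eta,\sigma_L,\sigma_c,u_{\min},u_{\max},C_\theta,T$ (not on $N$).

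Finally, applying Grönwall's lemma delivers $\Phi_N(t)\le K_T e^{K_T T}(1+\mathbb{E}|L_0^{1,N}|^2)$, which is exactly the bound claimed with $C_T := K_T e^{K_T T}$. Since this bound is uniform in $i$ by exchangeability and uniform in $N$ because the right-hand side of the integral inequality does not involve $N$, the lemma follows. The only mildly delicate point (not really an obstacle, but the place to be careful) is ensuring that the $\sup_{s\le t}$ can be exchanged with the empirical-mean bound before taking expectation: this is what forces us to dominate $\sup_s|m_s^N|^2$ by $\frac{1}{N}\sum_j\sup_s|L_s^{j,N}|^2$ via Cauchy--Schwarz rather than by the weaker pathwise inequality, so that the exchangeability argument closes the Grönwall loop at the level of the pathwise supremum rather than only at fixed times.
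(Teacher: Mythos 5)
Your proof is correct and follows the same overall skeleton as the paper's: a second-moment estimate on $|L_t^{i,N}|^2$, control of the mean-field coupling $m_t^N$ via exchangeability of the $N$-bank system, and a closing Gr\"onwall argument. The one genuine difference is in how you reach the \emph{sup-in-time} bound: the paper applies It\^o's formula to $|L_t^{i,N}|^2$ and then needs the Burkholder--Davis--Gundy inequality plus Young's inequality to handle the martingale term $\int 2L_s^{i,N}(\sigma_L\,dW_s^i+\sigma_c\,dB_s)$, whereas you work directly from the integral form of the SDE and exploit the fact that the noise is additive with constant coefficients, so Doob's $L^2$ maximal inequality applied to $W^i$ and $B$ suffices. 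This is a mild but real simplification --- it buys you a proof with no stochastic calculus beyond Doob, at the cost of a slightly cruder constant from the $(a_1+\cdots+a_6)^2\le 6\sum a_k^2$ splitting. Your observation about dominating $\sup_s|m_s^N|^2$ by $\tfrac{1}{N}\sum_j\sup_s|L_s^{j,N}|^2$ \emph{before} taking expectations is exactly the right place to be careful, and it is what lets the Gr\"onwall loop close at the level of $\Phi_N(t)=\sup_i\mathbb{E}\sup_{s\le t}|L_s^{i,N}|^2$. The only point you leave implicit (as does the paper) is the a priori finiteness of $\Phi_N$ needed to invoke Gr\"onwall, which is standard here since the system is linear with bounded inputs; a one-line localization remark would make the argument airtight.
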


\begin{proof}
Apply Ito's formula to $|L_t^{i,N}|^2$. The drift satisfies, using boundedness of $u_t,\theta_t$ and $\mathbb{E}[L_t^{i,N} m_t^N] \le (\mathbb{E}|L_t^{i,N}|^2)^{1/2} (\mathbb{E}|m_t^N|^2)^{1/2}$,
\[\mathbb{E}\big[2L_t^{i,N}(-\beta(L_t^{i,N} - m_t^N) + \eta u_t + \theta_t)\big] \le -\beta\,\mathbb{E}|L_t^{i,N}|^2 + C\big(1 + \mathbb{E}|L_t^{i,N}|^2\big).\]
The quadratic variation contributes $\sigma_L^2 + \sigma_c^2$. By the Burkholder--Davis--Gundy inequality (\emph{e.g.,} Karatzas and Shreve~\cite{Karatzas}, Theorem 3.28) and Young's inequality (\emph{e.g.,} Royden and Fitzpatrick~\cite{RoydenFitzpatrick2010}, Section 7.2) applied to the martingale terms, followed by Gronwall's inequality (\emph{e.g.,} Øksendal~\cite{OksendalSDE}, Chapter 5), we obtain the stated sup-in-time bound.
\end{proof}

\begin{theorem}[propagation-of-chaos]
Let $(L_t^i)_{i\ge 1}$ be i.i.d. copies of the McKean--Vlasov limit solving
\[dL_t^i = \big[-\beta(L_t^i - m_t) + \eta u_t + \theta_t\big]\,dt + \sigma_L\,dW_t^i + \sigma_c\,dB_t,\]
where $m_t = \mathbb{E}[L_t^i \mid \mathcal{F}_t^B]$. Under Assumption~\ref{assu:F} and synchronous coupling (same Brownian motions $W^i$ and $B$), there exists $C_T < \infty$ such that
\[\max_{1 \leq i \leq N} \mathbb{E}\Big[\sup_{0 \leq t \leq T} |L_t^{i,N} - L_t^i|^2\Big]^{1/2} \leq C_T N^{-1/2}.\]
\end{theorem}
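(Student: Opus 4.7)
The plan is to use a \emph{synchronous coupling} argument, in which the particle system $L_t^{i,N}$ and the McKean--Vlasov copies $L_t^i$ are driven by identical Brownian motions $W^i,B$ and share initial data $L_0^{i,N}=L_0^i$. Setting $D_t^i := L_t^{i,N} - L_t^i$, the diffusion terms $\sigma_L\,dW_t^i$ and $\sigma_c\,dB_t$ cancel exactly, as do the controls $\eta u_t + \theta_t$ (which are assumed to be the same in both systems). Thus $D_t^i$ satisfies the pathwise ODE
\[
\dot D_t^i \;=\; -\beta\,D_t^i \;+\; \beta\,(m_t^N - m_t), \qquad D_0^i = 0.
\]
This is the key simplification: there are no stochastic integrals in the difference equation, so the sup-in-time bound will not require the Burkholder--Davis--Gundy inequality.

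Next I would solve the ODE by variation of parameters to obtain $D_t^i = \beta\int_0^t e^{-\beta(t-s)}(m_s^N - m_s)\,ds$, giving $\sup_{0\le t\le T}|D_t^i| \le \beta\int_0^T |m_s^N - m_s|\,ds$. By the Cauchy--Schwarz inequality,
\[
\mathbb{E}\Big[\sup_{0\le t\le T}|D_t^i|^2\Big] \;\le\; \beta^2 T\int_0^T \mathbb{E}|m_s^N - m_s|^2\,ds.
\]
I would then decompose $m_s^N - m_s = \tfrac{1}{N}\sum_{j=1}^N D_s^j + \big(\bar L_s^N - m_s\big)$, where $\bar L_s^N := \tfrac{1}{N}\sum_j L_s^j$ is the empirical mean of the limit particles. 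By Jensen's inequality and exchangeability, the first term contributes at most $\mathbb{E}|D_s^1|^2$.

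The main obstacle is controlling the second term $\mathbb{E}|\bar L_s^N - m_s|^2$ correctly in the presence of common noise. Here I would use that, conditionally on $\mathcal{F}_s^B$, the McKean--Vlasov copies $\{L_s^j\}_{j\ge 1}$ are i.i.d. with common conditional mean $m_s$ and conditional variance $v_s$. A conditional law of large numbers then yields $\mathrm{Var}(\bar L_s^N \mid \mathcal{F}_s^B) = v_s/N$, so $\mathbb{E}|\bar L_s^N - m_s|^2 = \mathbb{E}[v_s]/N \le C/N$, with $\mathbb{E}[v_s]$ bounded uniformly in $s\in[0,T]$ by the uniform second-moment lemma applied to the limit system. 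Combining the two pieces, setting $f(t):=\mathbb{E}|D_t^1|^2$, I would obtain an inequality of the form $f(t) \le C_1\int_0^t f(s)\,ds + C_2/N$ for constants depending on $\beta,T$ and the uniform moment bound. Gronwall's inequality then gives $f(t)\le (C_2/N)\,e^{C_1 t}$, and re-inserting this bound into the sup-in-time estimate yields $\mathbb{E}\big[\sup_{0\le t\le T}|D_t^i|^2\big]^{1/2} \le C_T N^{-1/2}$, uniformly in $i$ by exchangeability. The only delicate point is the proper use of the conditional (common-noise) filtration $\mathcal{F}_t^B$ in the LLN step; once this is set up carefully, the remainder of the argument is routine.
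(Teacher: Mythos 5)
Your proposal is correct and follows essentially the same route as the paper's proof: synchronous coupling so the noise cancels, the decomposition of $m_t^N - m_t$ into the average coupling error plus the fluctuation $\bar L_t^N - m_t$ of the i.i.d.\ mean-field copies (bounded by $\mathbb{E}[\mathrm{Var}(L_t\mid\mathcal{F}_t^B)]/N$), and Gronwall. The only cosmetic difference is that you obtain the sup-in-time bound directly from the variation-of-parameters formula for the pathwise difference ODE, whereas the paper first bounds $\mathbb{E}|e_t^i|^2$ and then invokes maximal inequalities.
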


\begin{proof}
Let $e_t^i := L_t^{i,N} - L_t^i$. By synchronous coupling, the noise cancels and
\[de_t^i = -\beta\big(e_t^i - (m_t^N - m_t)\big)\,dt,\quad \frac{d}{dt}\,\mathbb{E}|e_t^i|^2 \;\le\; C\Big(\mathbb{E}|e_t^i|^2 + \mathbb{E}|m_t^N - m_t|^2\Big).\]
Decompose the mean error with $\bar m_t^N := \tfrac{1}{N}\sum_{j=1}^N L_t^{j}$ (i.i.d. mean-field copies):
\[\mathbb{E}|m_t^N - m_t|^2 \le 2\,\mathbb{E}|m_t^N - \bar m_t^N|^2 + 2\,\mathbb{E}|\bar m_t^N - m_t|^2 \le \frac{2}{N}\sum_{j=1}^N \mathbb{E}|L_t^{j,N} - L_t^{j}|^2 + \frac{2}{N}\,\mathbb{E}\big[\mathrm{Var}(L_t\mid \mathcal{F}_t^B)\big].\]
Plugging this into the differential inequality and applying Gronwall's inequality yields $\mathbb{E}|e_t^i|^2 \le C_T N^{-1}$, hence the stated $N^{-1/2}$ rate for the sup-in-time error by standard maximal inequalities.
\end{proof}

\begin{corollary}[convergence of moments]\label{coro:poc}
Let $v_t^N = \tfrac{1}{N}\sum_{i=1}^N |L_t^{i,N} - m_t^N|^2$ be the empirical variance and assume $\mathbb{E}|L_0^{i,N}|^4 < \infty$. Then
\[\mathbb{E}[|m_t^N - m_t|] \leq C_T N^{-1/2}, \qquad \mathbb{E}[|v_t^N - v_t|] \leq C_T N^{-1/2},\]
where $v_t = \mathrm{Var}(L_t^i \mid \mathcal{F}_t^B)$ is the conditional variance.
\end{corollary}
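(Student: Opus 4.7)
The plan is to reduce both estimates to outputs of the preceding theorem together with standard $L^2$ rates for empirical functionals of conditionally i.i.d.\ samples. The mean bound is immediate: the proof of the preceding theorem showed $\mathbb{E}|m_t^N-m_t|^2 \le C_T N^{-1}$ as an intermediate step, so Jensen's inequality yields $\mathbb{E}|m_t^N-m_t| \le (\mathbb{E}|m_t^N-m_t|^2)^{1/2} \le C_T^{1/2} N^{-1/2}$, which is the first claim.

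For the variance bound, I would decompose $v_t^N - v_t = (v_t^N - \bar v_t^N) + (\bar v_t^N - v_t)$, where $\bar v_t^N := \tfrac{1}{N}\sum_{i=1}^N |L_t^i - \bar m_t^N|^2$ is the empirical variance of the synchronously coupled mean-field copies with empirical mean $\bar m_t^N := \tfrac{1}{N}\sum_{i=1}^N L_t^i$. The first piece is a coupling error: applying $|a|^2 - |b|^2 = (a+b)(a-b)$ with $a = L_t^{i,N} - m_t^N$ and $b = L_t^i - \bar m_t^N$ expresses each summand in terms of $e_t^i := L_t^{i,N} - L_t^i$ and $m_t^N - \bar m_t^N$, both of order $N^{-1/2}$ in $L^2$ by the preceding theorem, multiplied by factors uniformly bounded in $L^2$ via the uniform-second-moments lemma; Cauchy--Schwarz then delivers $\mathbb{E}|v_t^N - \bar v_t^N| \le C_T N^{-1/2}$. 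For the statistical piece, I would use the identity $\bar v_t^N = \tfrac{1}{N}\sum_{i=1}^N |L_t^i - m_t|^2 - |\bar m_t^N - m_t|^2$. Conditionally on $\mathcal{F}_t^B$ the variables $\{L_t^i\}$ are i.i.d.\ with mean $m_t$ and variance $v_t$, so the first term is a centered average of conditionally i.i.d.\ random variables whose conditional variance is $O(N^{-1})$ once a uniform fourth moment is in hand, giving an unconditional $L^1$ rate of $O(N^{-1/2})$; the residual $|\bar m_t^N - m_t|^2$ has expectation $O(N^{-1})$ and is absorbed.

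The main obstacle is establishing a uniform-in-$N$ fourth moment bound $\sup_N \sup_{t \le T} \max_i \mathbb{E}|L_t^{i,N}|^4 < \infty$, which is required both for the Cauchy--Schwarz step in the coupling piece (to control $\mathbb{E}|L_t^{i,N} - m_t^N|^4$) and for the conditional-variance estimate in the statistical piece (to bound the variance of $|L_t^i - m_t|^2$). I would obtain it by repeating the Ito/Gronwall argument of the uniform-second-moments lemma applied to $|L_t^{i,N}|^4$, using boundedness of controls and distortions (Assumption~\ref{assu:F}), the $L^4$ initial-data hypothesis, Jensen's inequality $\mathbb{E}|m_t^N|^4 \le \tfrac{1}{N}\sum_j \mathbb{E}|L_t^{j,N}|^4$ to close the estimate despite the empirical-mean coupling in the drift, and the Burkholder--Davis--Gundy inequality to control the martingale contributions from $\sigma_L\,dW_t^i$ and $\sigma_c\,dB_t$. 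Everything else is a routine combination of Cauchy--Schwarz, Gronwall, and the PoC rate from the preceding theorem.
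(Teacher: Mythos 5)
The paper states this corollary without a proof, so there is no official argument to compare against; your proposal supplies the standard one, and it is correct. The mean bound follows, as you say, from the intermediate estimate $\mathbb{E}|m_t^N-m_t|^2\le C_T N^{-1}$ already obtained inside the proof of the propagation-of-chaos theorem, plus Jensen. For the variance, your decomposition into a coupling error and a statistical (conditional law-of-large-numbers) error, together with the identity $\bar v_t^N=\tfrac{1}{N}\sum_{i}|L_t^i-m_t|^2-|\bar m_t^N-m_t|^2$ and the conditional i.i.d.\ structure given $\mathcal F_t^B$, is the natural route and every step closes. Two small corrections, neither of which is a gap: first, the Cauchy--Schwarz estimate for the coupling piece only needs a uniform $L^2$ bound on the factor $(L_t^{i,N}-m_t^N)+(L_t^i-\bar m_t^N)$, which the uniform-second-moments lemma already provides, so no fourth moment is required there; second, the fourth-moment bound that \emph{is} needed for the conditional-variance estimate is on the mean-field copies $L_t^i$ rather than (only) on the particles $L_t^{i,N}$, but it follows from the identical It\^{o}/Gronwall argument applied to $|L_t^i|^4$ under the hypothesis $\mathbb{E}|L_0^{i,N}|^4<\infty$ and bounded controls and distortions, so this is cosmetic.
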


\paragraph{Convergence rates and finite-sample behavior.}
We briefly discuss convergence rates for practical implementation.

\begin{proposition}[rate of convergence]\label{prop:convergence_rate}
Let $(m^N_t, v^N_t)$ denote the empirical mean and variance of the $N$-bank system, and $(m_t, v_t)$ the mean-field limits. Under Assumption~\ref{assu:F}, for any $T > 0$,
\begin{equation*}
\mathbb{E}\left[\sup_{t \in [0,T]} |m^N_t - m_t|^2 + |v^N_t - v_t|^2\right] \leq \frac{C(T)}{N},
\end{equation*}
where $C(T)$ depends on the system parameters and grows polynomially in $T$.
\end{proposition}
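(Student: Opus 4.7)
The strategy is a two-term decomposition based on synchronous coupling: for each bank $i$, construct an i.i.d.\ mean-field copy $L_t^i$ driven by the same $W^i$ and common $B$ as in the PoC theorem above, and split both empirical moments into a \emph{coupling error} (controlled by the $L^2$ coupling estimate $\max_i \mathbb{E}[\sup_{[0,T]} |L_t^{i,N}-L_t^i|^2] \le C_T/N$) and a \emph{statistical fluctuation error} (the Monte Carlo error of $N$ conditionally i.i.d.\ samples of the McKean--Vlasov law). Both contributions will scale as $1/N$. To upgrade to a sup-in-time estimate I will need to control martingale parts via Doob/BDG, and to reach the variance I will need uniform fourth-moment bounds on both systems, which follow from Assumption~\ref{assu:F} by a standard It\^o/Gronwall argument applied to $|L_t^{i,N}|^4$.

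\textbf{Mean channel.} Write $m_t^N - m_t = (m_t^N - \bar m_t^N) + (\bar m_t^N - m_t)$ with $\bar m_t^N := \tfrac{1}{N}\sum_j L_t^j$. By Jensen,
\[\sup_{[0,T]}|m_t^N - \bar m_t^N|^2 \;\le\; \tfrac{1}{N}\sum_{j=1}^N \sup_{[0,T]} |L_t^{j,N} - L_t^j|^2,\]
so the PoC theorem yields $\mathbb{E}[\sup_{[0,T]} |m_t^N - \bar m_t^N|^2] \le C_T/N$. For the statistical piece, taking $\mathbb{E}[\,\cdot\mid \mathcal{F}_t^B]$ in the McKean--Vlasov SDE gives $dm_t = (\eta u_t + \theta_t)\,dt + \sigma_c\,dB_t$, hence $d(\bar m_t^N - m_t) = -\beta(\bar m_t^N - m_t)\,dt + \tfrac{\sigma_L}{N}\sum_j dW_t^j$. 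Apply It\^o to $|\bar m_t^N - m_t|^2$: the martingale has quadratic variation $\sigma_L^2 t/N$, so Doob's inequality plus Gronwall give $\mathbb{E}[\sup_{[0,T]} |\bar m_t^N - m_t|^2] \le C_T/N$.

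\textbf{Variance channel.} Use the identity $v_t^N = \tfrac{1}{N}\sum_i (L_t^{i,N})^2 - (m_t^N)^2$ and $v_t = \mathbb{E}[(L_t^1)^2 \mid \mathcal{F}_t^B] - m_t^2$, then decompose through $\bar S_t^N := \tfrac{1}{N}\sum_i (L_t^i)^2$. The coupling piece is
\[\tfrac{1}{N}\sum_i \bigl[(L_t^{i,N})^2 - (L_t^i)^2\bigr] \;=\; \tfrac{1}{N}\sum_i (L_t^{i,N} - L_t^i)(L_t^{i,N} + L_t^i),\]
which after Cauchy--Schwarz and the uniform fourth-moment bound is of order $\mathbb{E}[\sup|L^{i,N}-L^i|^2]^{1/2} \cdot O(1) = O(N^{-1/2})$ in $L^1$; the squared version, needed for the $L^2$ sup bound, is $O(N^{-1})$ once the fourth moments of the coupling error are in hand (obtained by repeating the PoC argument for $|L^{i,N}-L^i|^4$ or by direct $L^4$-Gronwall). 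The purely statistical piece $\bar S_t^N - \mathbb{E}[(L_t^1)^2 \mid \mathcal{F}_t^B]$ is an $\mathcal{F}_t^B$-conditionally empirical mean of i.i.d.\ variables with uniformly bounded fourth conditional moment, giving $\mathbb{E}|\bar S_t^N - \mathbb{E}[(L_t^1)^2 \mid \mathcal{F}_t^B]|^2 \le C_T/N$ pointwise in $t$, upgraded to $\sup_t$ by applying It\^o to $(L_t^i)^2$ (so that $\bar S_t^N$ satisfies an SDE with $1/\sqrt N$ martingale part) and invoking Doob and Gronwall. Finally, $(m_t^N)^2 - m_t^2$ is handled by $|(m_t^N)^2 - m_t^2| = |m_t^N + m_t||m_t^N - m_t|$, combined with the mean bound and the second-moment bound on $m_t^N$ that already follows from the uniform-moment lemma. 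Summing the pieces yields $\mathbb{E}[\sup_{[0,T]} |v_t^N - v_t|^2] \le C_T/N$.

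\textbf{Main obstacle.} The cleanest step is the mean; the real work is obtaining the sup-in-time $L^2$ rate for the variance. This requires (i)~uniform fourth-moment bounds on $L_t^{i,N}$ and $L_t^i$, (ii)~upgrading the PoC theorem from $L^2$ to $L^4$ coupling error (so that the cross term in the expansion of $(L^{i,N})^2 - (L^i)^2$ is controlled in $L^2$, not merely $L^1$), and (iii)~a uniform-in-$t$ estimate for the empirical second moment $\bar S_t^N$, which demands writing $(L_t^i)^2$ as a semimartingale and applying BDG to an $N^{-1/2}$-scaled martingale. Conditioning on $\mathcal{F}_t^B$ throughout is essential: without it, the samples are \emph{not} independent and the $1/N$ statistical rate fails; with it, the conditional independence of the idiosyncratic processes restores the standard Monte Carlo scaling, and $C(T)$ then grows polynomially in $T$ because the Gronwall constant from the dissipative drift $-\beta$ is uniform.
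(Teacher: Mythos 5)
Your overall strategy is sound, and it is worth noting that the paper itself gives no proof of Proposition~\ref{prop:convergence_rate}: it is stated with a one-line appeal to ``standard'' mean-field rates, and the preceding Corollary~\ref{coro:poc} is likewise unproved. Your argument is therefore not an alternative route but a filling-in of omitted details, and it is the natural one: it reuses exactly the decomposition of the paper's propagation-of-chaos theorem (coupling error through the synchronously coupled i.i.d.\ copies, plus a conditional Monte Carlo fluctuation $\bar m_t^N - m_t$), upgrades it to sup-in-time $L^2$ via the OU-type SDE for $\bar m_t^N - m_t$ with an $N^{-1}$-variance martingale, and then does the genuinely new work on the variance channel. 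Your identification of what that requires --- uniform fourth moments, an $L^4$ version of the coupling estimate (available here because the coupling error solves a noise-free linear ODE), and a semimartingale representation of $\bar S_t^N$ to get a uniform-in-$t$ bound --- is exactly right.

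Three points need tightening before this is a complete proof. First, Assumption~\ref{assu:F} as stated does not include $\mathbb{E}|L_0^{i,N}|^4<\infty$; the paper only adds that hypothesis for Corollary~\ref{coro:poc}, so you must either import it explicitly or restrict the variance half of the claim accordingly --- without it the $L^2$ sup bound on $v_t^N-v_t$ is out of reach. Second, in the uniform-in-$t$ estimate for $\bar S_t^N-\mathbb{E}[(L_t^1)^2\mid\mathcal{F}_t^B]$, the drift of $(L_t^i)^2$ is $2L_t^i(-\beta(L_t^i-m_t)+\eta u_t+\theta_t)+\sigma_L^2+\sigma_c^2$, so the difference process has, besides the $N^{-1/2}$-scaled martingale you invoke Doob/BDG for, a drift that is itself an empirical-mean-minus-conditional-expectation of a second conditionally i.i.d.\ family; this term must be bounded by Cauchy--Schwarz in time ($\mathbb{E}[\sup_t|\int_0^t\cdots|^2]\le T\int_0^T\mathbb{E}|\cdots|^2\,ds\le CT^2/N$) rather than absorbed silently into ``Gronwall.'' Third, your closing claim that $C(T)$ grows only polynomially does not follow from a generic Gronwall bound, which gives $e^{CT}$; you need to actually use the dissipative $-\beta$ drift (e.g., the stationary OU variance bound $\sup_t\mathbb{E}|\bar m_t^N-m_t|^2\le \sigma_L^2/(2\beta N)$ and the explicit exponential kernel in the coupling-error ODE) as you hint, and this should be carried through each channel rather than asserted. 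None of these is a conceptual obstruction; they are the steps where the write-up must become quantitative.
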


The $O(1/\sqrt{N})$ convergence rate in $L^2$ is standard for mean-field limits with Lipschitz coefficients. The mean-field limit becomes increasingly accurate for larger banking systems.

\begin{remark}[implications]
The dependence of the convergence constant $C(T)$ on terminal cost weights $G_m, G_v$ suggests that stronger terminal penalties require larger $N$ for accurate approximation. However, the robustness-breakdown thresholds (Remark~\ref{rema:break}) are determined by the Riccati coefficients and remain valid independent of $N$.
\end{remark}

\section{Technical proofs}\label{appe:B}
\subsection{Proof of \Cref{theo:visc}}\label{proo:visc}
\begin{proof}
We adapt the standard viscosity argument (\emph{e.g.,} \O{}ksendal and Sulem~\cite{OS-SCJD}, Chapter 12) to absolutely continuous dynamics. Let $(t_0,x_0)$ and $\varphi\in C^1$ be such that $V-\varphi$ attains a local minimum at $(t_0,x_0)$ with $V(t_0,x_0)=\varphi(t_0,x_0)$. For small $\delta>0$ and any stopping time $\tau_\delta\in[t_0,t_0+\delta]$, the DPP yields
\[V(t_0,x_0) \le \inf_{u,\pi} \sup_{\theta,\xi}\Big[ \int_{t_0}^{\tau_\delta} \ell(X_s,u_s,\pi_s,\theta_s,\xi_s)\,ds + V(\tau_\delta,X_{\tau_\delta}) \Big].\]
Since $V\ge \varphi$ and they touch at $(t_0,x_0)$,
\[0 \le \inf_{u,\pi} \sup_{\theta,\xi}\Big[ \int_{t_0}^{\tau_\delta} \ell(\cdot)\,ds + \varphi(\tau_\delta,X_{\tau_\delta}) - \varphi(t_0,x_0) \Big].\]
As $X$ is absolutely continuous and $\varphi\in C^1$,
\[\varphi(\tau_\delta,X_{\tau_\delta}) - \varphi(t_0,x_0) = \int_{t_0}^{\tau_\delta} \Big( \partial_t \varphi + \nabla_x \varphi \cdot \dot X_s \Big)\, ds + o(\delta),\]
with $o(\delta)/\delta \to 0$ uniformly by compactness of action sets. Dividing by $\delta$ and letting $\delta\downarrow 0$ yields
\[0 \ge \inf_{u,\pi}\sup_{\theta,\xi} \Big\{ \ell(x_0,u,\pi,\theta,\xi) + \partial_t \varphi(t_0,x_0) + \nabla_x \varphi(t_0,x_0)\cdot b(x_0,u,\pi,\theta,\xi) \Big\},\]
with $b(x,u,\pi,\theta,\xi)=(\eta u+\theta,\,-2\beta v + \sigma_L^2+\sigma_c^2+\xi-\chi\pi)$ as defined in \cref{sec:model_primitives}. Optimizing in $(\theta,\xi)$ gives the Hamiltonian representation. Hence
\[- \partial_t \varphi(t_0,x_0) + H\big(x_0,\nabla_x \varphi(t_0,x_0)\big) \ge 0,\]
which proves Part 1. The terminal condition and Part 2 follow by the standard viscosity tests (\emph{e.g.,} Fleming and Soner~\cite{FlemingSoner2006}) with functions touching from above/below at $t=T$. Part 3 follows from Isaacs’ condition, which is ensured under Assumption~\ref{assu:A}.
\end{proof}

\subsection{Proof of \Cref{theo:comparison}}\label{proo:comparison}
\begin{proof}
We use doubling-of-variables and penalization (Crandall, Ishii, and Lions~\cite{CrandallIshiiLions1992} and Fleming and Soner~\cite{FlemingSoner2006}), within the constrained viscosity framework on $\mathbb{R}\times\mathbb{R}_+$.

\paragraph{Coercive penalization and sup of gaps.}
Define, for $\alpha,\gamma>0$,
\[\Psi_{\alpha,\gamma}(t,s,x,y):= U(t,x) - W(s,y) - \frac{|x-y|^2}{2\alpha} - \frac{|t-s|^2}{2\gamma}- \zeta \big(1+|x|^{2q}+|y|^{2q}\big),\]
with small parameters $\zeta>0$ and integer $q\ge 1$ dominating the polynomial growth. Let $(t_{\alpha},s_{\alpha},x_{\alpha},y_{\alpha})$ maximize $\Psi_{\alpha,\gamma}$ on $[0,T]\times[0,T]\times(\mathbb{R}\times\mathbb{R}_+)^2$. By coercivity and growth control, $|x_{\alpha}-y_{\alpha}|\to 0$ and $|t_{\alpha}-s_{\alpha}|\to 0$ as $\alpha,\gamma\downarrow 0$. Moreover, $\sup \Psi_{\alpha,\gamma} \to \sup_{t,x} (U-W)$ as $\alpha,\gamma\downarrow 0$ then $\zeta\downarrow 0$.

\paragraph{Ishii’s lemma.}
There exist jets
\[(a_X,p_X,X)\in \overline{\mathcal{P}}^{2,+} U(t_{\alpha},x_{\alpha}),\quad (a_Y,p_Y,Y)\in \overline{\mathcal{P}}^{2,-} W(s_{\alpha},y_{\alpha}),\]
with
\[a_X = \tfrac{t_{\alpha}-s_{\alpha}}{\gamma},\quad a_Y = -\tfrac{t_{\alpha}-s_{\alpha}}{\gamma},\]
\[p_X = \tfrac{x_{\alpha}-y_{\alpha}}{\alpha} - \zeta\,\nabla_x\big(|x|^{2q}\big)\big|_{x=x_{\alpha}},\quad p_Y = \tfrac{x_{\alpha}-y_{\alpha}}{\alpha} + zeta\,\nabla_y\big(|y|^{2q}\big)\big|_{y=y_{\alpha}}.\]
The second-order matrices are coupled in the standard way by Ishii’s lemma. Since the HJBI is first order, only the first-order components enter the inequalities below. The additional $\zeta$-gradient terms vanish as $\zeta\downarrow 0$ uniformly on bounded sets.

\paragraph{Sub-/super- inequalities.}
By the viscosity properties (with constrained semijets),
\[-a_X + H(x_{\alpha},p_X) \le 0, \quad -a_Y + H(y_{\alpha},p_Y) \ge 0.\]
Subtracting the second inequality from the first yields
\[H(x_{\alpha},p_X) - H(y_{\alpha},p_Y) \le a_X - a_Y = 2\, \tfrac{t_{\alpha}-s_{\alpha}}{\gamma}.\]
Hence, taking $\limsup$ as $\alpha,\gamma \downarrow 0$ and using continuity of $H$ together with $|t_{\alpha}-s_{\alpha}| \to 0$, $|x_{\alpha}-y_{\alpha}|\to 0$, and the fact that the $\zeta$-terms vanish as $\zeta\downarrow 0$,
\[\limsup_{\alpha,\gamma\downarrow 0} \big[ H(x_{\alpha},p_X) - H(y_{\alpha},p_Y) \big] \le 0.\]

\paragraph{Continuity and stability of $H$.}
By Assumption~\ref{assu:B}(2) and $|x_{\alpha}-y_{\alpha}|\to 0$, $|p_X-p_Y|\to 0$ (including the $\zeta$-gradient contributions),
\[\limsup_{\alpha,\gamma\downarrow 0} \big[ H(y_{\alpha},p_Y) - H(x_{\alpha},p_X) \big] \le 0.\]
Taking lim sup in the previous inequality yields zero, and the penalization construction implies $\sup_{t,x} \big( U(t,x) - W(t,x) \big) \le 0,$ hence $U\le W$ on the whole domain.

\paragraph{Terminal time and boundary.}
The time penalization and the viscosity attainment of $U(T,\cdot)\le g \le W(T,\cdot)$ preclude a positive gap at $t=T$. At $v=0$, the constrained-viscosity framework on the closed set $\mathbb{R}\times\mathbb{R}_+$ (Assumption~\ref{assu:B}(4)) applies. No explicit boundary condition is imposed, and test functions are taken from the interior.
\end{proof}

\subsection{Proof of \Cref{theo:existence}}\label{proo:existence}
\begin{proof}
We construct time-discrete approximations $V^{\Delta}$ using a monotone, stable, and consistent semi-Lagrangian scheme with piecewise-constant controls, enforcing viability at $v \ge 0$ (no step leaves the domain). Set $V^{\Delta}(T,\cdot)=g$ and, for $t_n = T - n\Delta$,
\begin{equation}
\begin{aligned}
V^{\Delta}(t_n,x)
= \inf_{(u,\pi)\in \mathcal U \times \mathcal P} \; \sup_{(\theta,\xi)\in \Theta \times \Xi}
\Big\{ \Delta\, \tilde{\ell}(x,u,\pi,\theta,\xi) + V^{\Delta}\big(t_{n+1},\, x + \Delta\, b(x,u,\pi,\theta,\xi)\big) \Big\},
\end{aligned}
\end{equation}
with the step restricted to $x' = x + \Delta\, b(\cdot)$ satisfying $v' \ge 0$. The KL penalty is absorbed in the running cost $\tilde{\ell}$, which is coercive in $(\theta,\xi)$. The scheme is monotone. Stability follows from either boundedness of controls (compact case) or the coercive lower bound in $(u,\pi)$ (unbounded case), together with at-most-linear growth of $b$, and it is consistent with the HJBI.

By the Barles--Souganidis framework~\cite{BarlesSouganidis1991}, monotone, stable, and consistent schemes for equations with a comparison principle converge to the unique viscosity solution. Using half-relaxed limits, $V^{\Delta}$ converges locally uniformly to $V$. Polynomial growth follows from discrete Gronwall bounds for the Euler step and, in the unbounded case, the coercive quadratic terms in $(u,\pi)$.

Finally, if Isaacs' condition holds (so $H$ is the common Isaacs Hamiltonian), the robust value function defined via the DPP is a viscosity solution of the same HJBI with the same growth. By comparison, it coincides with $V$.
\end{proof}

\subsection{Proof of \Cref{theo:veri}}\label{proo:veri}
\begin{proof}
By the DPP in Proposition~\ref{prop:DPP} and \Cref{theo:comparison}, any viscosity solution with the prescribed growth and terminal condition must coincide with the robust value function. By Berge's maximum theorem (Berge~\cite{Berge1997}, Section VI-3), existence of measurable minimizers follows from compact action sets and continuity of the Hamiltonian in the controls. The standard super-/submartingale verification argument (\emph{e.g.,} Pham~\cite{Pham2009}, Section 3.5) applied to $t\mapsto \Phi(t,X_t)$ along admissible trajectories yields optimality of the minimizing feedback controls. The state-constraint boundary at $v=0$ is handled in the viscosity sense as in \cref{sect:comparison}.
\end{proof}

\section{Sensitivity analysis and comparative statics}\label{appe:C}
In this appendix, we consider the Riccati sensitivity ODE and Lipschitz comparative statics for the value function. We also derive bounds on value function losses under drift misspecification.

\subsection{Setup and Riccati sensitivity ODE}\label{sect:setupRiccati}
Let $a(t)$ collect the six coefficients of the quadratic value function from \cref{eq:ansatz}, evolving under the interior Riccati map $F$ in the Riccati ODE system \cref{eq:Riccati} with terminal vector $a_T$ in \cref{eq:app_terminal}. Denote parameters by the vector $\Theta$. We restrict our analysis to a compact parameter set $\mathcal{K}$.

We invoke the following local differentiability regularity and interior no-switching assumptions.
\begin{assumption}[local differentiability regularity and interior no-switching]\label{assu:D}
We assume:
\begin{enumerate}
\item The primitives $(b,\ell,g)$ and model coefficients (\emph{e.g.}, $\eta,\kappa,R_u,R,\lambda_m,\lambda_v,\beta,\Sigma^2$) are $C^1$ in $\Theta$ on compact subsets of the parameter space, with locally bounded partial derivatives. The terminal map $a_T(\Theta)$ is $C^1$ in $\Theta$. On interior regions, the right-hand side $F$ in the Riccati ODE system \cref{eq:Riccati} is $C^1$ in $(a,\Theta)$ and locally Lipschitz in $a$, uniformly on compacts.
\item On the time interval under consideration, the projections in \cref{eq:selectors} are inactive. There exist $\delta>0$ and $\varepsilon>0$ such that, for the baseline parameter $\Theta$ and any $\Theta'$ with $\|\Theta'-\Theta\|\le \varepsilon$ (where $\Theta' \in \mathcal K$), the selector arguments on $[t,T]$ remain at least $\delta$ away from the projection boundaries, uniformly along the closed-loop trajectories considered.
\end{enumerate}
\end{assumption}

By the interior no-switching assumption (Assumption~\ref{assu:D}(2)), we remain in the interior (unconstrained) regime of the control selectors. The unconstrained optimizer stays strictly within the admissible sets. Therefore, the projections in \cref{eq:selectors} are inactive, and no threshold is crossed that would change the selector’s formula. Consequently, the selector law is smooth and time-continuous on the interval, with no regime switches or discontinuous shifts in the control law.

Fix a baseline $\Theta$ and a perturbation $\Theta'=\Theta+\delta\Theta$. Under Assumptions~\ref{assu:A} and~\ref{assu:D} and away from selector switching surfaces, the Riccati flow is differentiable in $\Theta$.

\begin{lemma}[Riccati sensitivity ODE]\label{lemm:sensitivity}
Let $D_aF$ and $D_\Theta F$ denote the Jacobians of the interior Riccati map $F$ evaluated along the baseline trajectory $t\mapsto a(t;\Theta)$. Then the directional derivative $\Delta a(t):=\partial_\Theta a(t;\Theta)[\delta\Theta]$ satisfies
\[\frac{d}{dt}\Delta a(t) = D_aF\big(t,a(t;\Theta),\Theta\big)\,\Delta a(t) + D_\Theta F\big(t,a(t;\Theta),\Theta\big)[\delta\Theta], \quad \Delta a(T)=\partial_\Theta a_T(\Theta)[\delta\Theta],\]
with variation-of-constants representation
\[\Delta a(t)=\Phi(t,T)\,\Delta a(T) +\int_t^T \Phi(t,s)\,D_\Theta F\big(s,a(s;\Theta),\Theta\big)[\delta\Theta]\;ds,\]
where $\Phi(t,s)$ is the principal solution of $\dot\Phi(t,s)=D_aF\big(t,a(t;\Theta),\Theta\big)\,\Phi(t,s)$ with $\Phi(s,s)=I$. If $a_T$ is $\Theta$-independent (\emph{e.g.}, fixed $G_m$ in $a_{11}(T)=G_m$), then $\partial_\Theta a_T(\Theta)[\delta\Theta]=0$.
\end{lemma}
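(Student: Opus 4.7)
The plan is to invoke the classical theorem on smooth dependence of solutions of ODEs on parameters, applied backward in time to the Riccati terminal-value problem $\dot a = F(t,a,\Theta)$ on $[t,T]$ with $a(T)=a_T(\Theta)$. Under Assumption~\ref{assu:D}(1), the map $F$ is jointly $C^1$ in $(a,\Theta)$ and locally Lipschitz in $a$ on compact sets, and $a_T$ is $C^1$ in $\Theta$. Crucially, Assumption~\ref{assu:D}(2) supplies a uniform $\delta$-margin from the projection boundaries along the closed-loop trajectories for every $\Theta'$ in an $\varepsilon$-neighborhood of the baseline, so $F$ coincides with the smooth interior Riccati map throughout the entire time interval. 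Standard results (e.g., Hartman-type smoothness theorems) then yield that $\Theta \mapsto a(\cdot;\Theta)$ is $C^1$ as a map into $C^1([t,T];\mathbb{R}^6)$, uniformly on compact subsets of the parameter space.

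Next, I would differentiate the defining identity $\dot a(t;\Theta) = F(t,a(t;\Theta),\Theta)$ in $\Theta$ along the direction $\delta\Theta$. The $C^1$ regularity established above justifies interchanging $\partial_\Theta$ with $d/dt$, and the chain rule produces the linear inhomogeneous first-variation ODE
\[\frac{d}{dt}\Delta a(t) = D_aF(t,a(t;\Theta),\Theta)\,\Delta a(t) + D_\Theta F(t,a(t;\Theta),\Theta)[\delta\Theta].\]
Differentiating the terminal condition gives $\Delta a(T) = \partial_\Theta a_T(\Theta)[\delta\Theta]$, which vanishes whenever $a_T$ is $\Theta$-independent (for instance if the terminal weights $G_m, G_v$ are not among the perturbed parameters, so that $a_{11}(T)=G_m$ and $a_2(T)=G_v$ remain fixed).

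The Duhamel representation then follows from routine linear ODE theory: continuity of $t \mapsto D_aF(t,a(t;\Theta),\Theta)$ on $[t,T]$ guarantees existence and joint continuity of the principal solution $\Phi(t,s)$ of the homogeneous equation $\dot X = D_aF(\cdot)X$ with $\Phi(s,s)=I$, and solving the linear inhomogeneous backward ODE via the variation-of-constants formula yields the claimed integral representation. The only genuine obstacle is the differentiability step; in particular, the interior no-switching hypothesis in Assumption~\ref{assu:D}(2) is essential, since at a selector switching time the Riccati vector field degrades from $C^1$ to merely Lipschitz, and the sensitivity would then hold only in a one-sided sense with additional jump contributions from the switching surface. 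Our uniform $\delta$-margin over the parameter neighborhood rules out such regime changes and delivers genuine two-sided Gateaux differentiability of the flow.
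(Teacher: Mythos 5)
Your proposal is correct and follows essentially the same route as the paper: both invoke the classical $C^1$-dependence of ODE solutions on parameters (Hartman, Chapter V) for the terminal-value Riccati problem under Assumption~\ref{assu:D}, differentiate the defining identity to obtain the first-variation equation, and conclude via the variation-of-constants formula, with the same observation about one-sided derivatives at selector switching times. No gaps.
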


\begin{proof}
On any interval where Assumption~\ref{assu:D} holds, $a(\cdot;\Theta)$ solves the terminal-value ODE $\dot a(t)=F\big(t,a(t;\Theta),\Theta\big)$ with $a(T)=a_T(\Theta)$, where $F$ is $C^1$ in $(a,\Theta)$ and locally Lipschitz in $a$. By $C^1$-dependence of ODE solutions on parameters (\emph{e.g.}, Hartman~\cite{Hartman2002}, Chapter V), $\Theta\mapsto a(\cdot;\Theta)$ is $C^1$ on such intervals. Differentiating yields the stated linear variational equation and the variation-of-constants formula, with $\Phi$ the principal solution. If $a_T$ does not depend on $\Theta$, then $\Delta a(T)=0$. The conclusions hold piecewise between selector switching times. Across switches, use one-sided or generalized derivatives.
\end{proof}

\begin{remark}[computational aspects]
The sensitivity ODE in Lemma~\ref{lemm:sensitivity} can be solved numerically alongside the baseline Riccati system, providing gradient information for optimization or robustness analysis without requiring finite differences.
\end{remark}

For $x=(m,v)$, $\Delta V(t,x)=\sum_i \partial_{a_i}V(t,x)\,\Delta a_i(t)$. In the interior, $\Delta\phi$ follows by differentiating the selectors in \cref{eq:selectors} along $a(\cdot;\Theta)$; with active projections, use piecewise derivatives away from switching times and one-sided derivatives at switch times.

\subsection{Lipschitz comparative statics for the value function}\label{sect:lipschitz}
\begin{theorem}[comparative statics: Lipschitz continuity of value]
Assume Assumptions~\ref{assu:A} and~\ref{assu:D} on an interval where selectors are inactive. Let $\mathcal K$ be a compact set of parameters. Suppose there exists a compact set $\mathcal A\subset\mathbb R^6$ such that, for every $\Theta\in\mathcal K$, the Riccati trajectory $t\mapsto a(t;\Theta)$ remains in $\mathcal A$ for all $t\in[0,T]$. Assume further that on $[0,T]\times \mathcal A\times \mathcal K$ the maps $D_aF$ and $D_\Theta F$ are bounded, and that $a_T$ is $C^1$ with $\sup_{\Theta\in\mathcal K}\|\partial_\Theta a_T(\Theta)\|<\infty$. Then, for any $x$ and $t\in[0,T]$, there exists $C_{T,\mathcal K}>0$ such that for all $\Theta,\Theta'\in\mathcal K$,
\[|V(t,x;\Theta') - V(t,x;\Theta)| \le C_{T,\mathcal K}\, (1+|x|^2)\, \|\Theta'-\Theta\|.\]
If $a_T$ is $\Theta$-independent, $C_{T,\mathcal K}$ can be chosen without the terminal sensitivity term.
\end{theorem}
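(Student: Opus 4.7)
The quadratic ansatz \eqref{eq:ansatz} writes $V(t,x;\Theta)$ as a linear combination of the six monomials $\{1,m,v,m^2,mv,v^2\}$ with coefficients $a(t;\Theta)\in\mathbb{R}^{6}$. The first step is therefore to linearize in $a$: subtracting the ansatz at $\Theta'$ and $\Theta$ and applying Cauchy--Schwarz on the monomial basis gives
\[|V(t,x;\Theta')-V(t,x;\Theta)| \;\le\; C_0\,(1+|x|^2)\,\|a(t;\Theta')-a(t;\Theta)\|,\]
with an absolute constant $C_0$. The whole problem thus reduces to controlling the coefficient difference uniformly in $t\in[0,T]$ in terms of $\|\Theta'-\Theta\|$.

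\textbf{Finite-difference ODE and Gronwall.} Rather than integrating Lemma~\ref{lemm:sensitivity} along a curve inside $\mathcal{K}$ (which would require connectedness or convexity of $\mathcal{K}$), I would work directly with the finite difference $\delta a(t) := a(t;\Theta')-a(t;\Theta)$. Using the Riccati terminal-value ODE on both sides and splitting at the intermediate point $(a(t;\Theta'),\Theta)$ yields
\[\dot{\delta a}(t) \;=\; \bigl[F(t,a(t;\Theta'),\Theta')-F(t,a(t;\Theta'),\Theta)\bigr] \;+\; \bigl[F(t,a(t;\Theta'),\Theta)-F(t,a(t;\Theta),\Theta)\bigr].\]
By hypothesis both trajectories remain trapped in the compact set $\mathcal{A}$, the Jacobians $D_a F$ and $D_\Theta F$ are bounded on $[0,T]\times\mathcal{A}\times\mathcal{K}$, and $a_T$ is $C^1$ on $\mathcal{K}$ with bounded derivative. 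Applying the mean-value inequality to each bracket produces constants $L_a, L_\Theta$ with
\[|\dot{\delta a}(t)| \;\le\; L_a\,|\delta a(t)| \;+\; L_\Theta\,\|\Theta'-\Theta\|,\qquad |\delta a(T)| \;\le\; L_T\,\|\Theta'-\Theta\|,\]
where $L_T := \sup_{\Theta\in\mathcal{K}}\|\partial_\Theta a_T(\Theta)\|$.

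\textbf{Backward Gronwall and main obstacle.} Since the Riccati system runs backward in time, integrate from $T$ down to $t$ and apply Gronwall's inequality to $s\mapsto|\delta a(s)|$ on $[t,T]$. This delivers
\[|\delta a(t)| \;\le\; \bigl(L_T + L_\Theta(T-t)\bigr)\,e^{L_a(T-t)}\,\|\Theta'-\Theta\| \;\le\; C_{T,\mathcal{K}}\,\|\Theta'-\Theta\|,\]
and combining with the reduction of the first paragraph gives the stated bound. When $a_T$ is $\Theta$-independent one has $L_T=0$ and the constant reduces accordingly. The main obstacle is not any individual estimate but the uniform-trapping hypothesis itself: the existence of a single compact $\mathcal{A}$ containing the trajectory $a(\cdot;\Theta)$ for every $\Theta\in\mathcal{K}$ is delicate because Proposition~\ref{prop:stability} shows that the Riccati system blows up in finite time once either stability margin crosses zero. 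Consequently $\mathcal{K}$ must be restricted to a strict subset of the stability region in order to secure such a uniform compact bound and, simultaneously, the selector inactivity of Assumption~\ref{assu:D}(2); once those restrictions are imposed (as in the hypotheses), the remainder is a standard continuous-dependence-on-parameters argument, carried out piecewise on each interval where the selectors are inactive.
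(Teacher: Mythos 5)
Your proof is correct and reaches the stated bound, but it takes a genuinely different route from the paper. The paper parametrizes the segment $\Theta_s=\Theta+s(\Theta'-\Theta)$, invokes Lemma~\ref{lemm:sensitivity} to get the linear variational (sensitivity) ODE for $\Delta a_s(t)=\partial_\Theta a(t;\Theta_s)[\delta\Theta]$, bounds $\|\Delta a_s(t)\|\le C_T\|\delta\Theta\|$ uniformly in $s$ by Gronwall, and then integrates $\tfrac{d}{ds}V(t,x;\Theta_s)$ over $s\in[0,1]$. You instead bypass the sensitivity ODE entirely: you form the finite difference $\delta a(t)=a(t;\Theta')-a(t;\Theta)$, split the Riccati right-hand sides at the intermediate point $(a(t;\Theta'),\Theta)$, apply the mean-value inequality to each bracket, and run backward Gronwall. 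Your variant is slightly more elementary (no variation-of-constants representation needed) and, as you correctly point out, avoids the paper's implicit requirement that the segment $\Theta_s$ remain inside $\mathcal K$ (and that $a(\cdot;\Theta_s)\in\mathcal A$ for every intermediate $s$), which strictly speaking needs $\mathcal K$ convex; the paper's route, in exchange, yields the explicit first-order sensitivity object $\Delta a$ that is reused elsewhere (\emph{e.g.}, in the robustness-loss bound). One small caveat in your argument mirrors the one you identify in the paper's: the mean-value inequalities require the segments joining $a(t;\Theta)$ to $a(t;\Theta')$ and $\Theta$ to $\Theta'$ to lie where $D_aF$ and $D_\Theta F$ are bounded, i.e., in the convex hulls of $\mathcal A$ and $\mathcal K$; since $F$ is polynomial in $a$ and $C^1$ in $\Theta$ on a neighborhood of the compacts, the bounds extend to these hulls and the step is harmless, but it is worth stating. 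Your closing observation — that the real content of the hypotheses is the uniform trapping of trajectories in a single compact $\mathcal A$, which forces $\mathcal K$ into a strict subset of the stability region of Proposition~\ref{prop:stability} — is accurate and a useful complement to the paper's presentation.
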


\begin{proof}
Consider the line $\Theta_s=\Theta+s(\Theta'-\Theta)$, $s\in[0,1]$, and set $\delta\Theta=\Theta'-\Theta$. By Lemma~\ref{lemm:sensitivity}, for each $s$ the directional derivative $\Delta a_s(t):=\partial_\Theta a(t;\Theta_s)[\delta\Theta]$ solves
\[\frac{d}{dt}\Delta a_s(t)=A_s(t)\Delta a_s(t)+b_s(t),\quad \Delta a_s(T)=\partial_\Theta a_T(\Theta_s)[\delta\Theta],\]
with $A_s(t)=D_aF(t,a(t;\Theta_s),\Theta_s)$ and $b_s(t)=D_\Theta F(t,a(t;\Theta_s),\Theta_s)[\delta\Theta]$. By the boundedness assumptions on $[0,T]\times\mathcal A\times\mathcal K$, Gronwall's inequality yields
\[\begin{aligned}\|\Delta a_s(t)\| &\le \Big(\|\partial_\Theta a_T(\Theta_s)\|+\int_t^T \|D_\Theta F(r,\cdot,\cdot)\|\,dr\Big)\\&\phantom{\le}\times \exp\!\Big(\int_t^T \|D_aF(r,\cdot,\cdot)\|\,dr\Big)\,\|\delta\Theta\|\le C_T\,\|\delta\Theta\|,\end{aligned}\]
with $C_T$ uniform in $s\in[0,1]$ and $\Theta\in\mathcal K$. Since $V(t,x;\Theta)$ is quadratic in $x$ with coefficients $a(t;\Theta)$, its $s$-derivative satisfies
\[\Big|\frac{d}{ds}V(t,x;\Theta_s)\Big|\le C(1+|x|^2)\,\|\Delta a_s(t)\|\le C'(1+|x|^2)\,\|\delta\Theta\|.\]
Integrating in $s\in[0,1]$ yields the stated Lipschitz bound with $C_{T,\mathcal K}=C'$. If $a_T$ is independent of $\Theta$, the term involving $\|\partial_\Theta a_T\|$ drops from $C_T$.
\end{proof}

\subsection{Robustness loss bounds under drift misspecification}\label{sect:robustness}
Let $\Theta=(\Theta_d,\Theta_o)$, where $\Theta_d$ collects drift parameters that may be misspecified. Suppose the implemented controller is designed for $\Theta$ but the true model is $\Theta'=(\Theta_d+\delta_d,\Theta_o)$ with $\left\lVert{\delta_d}\right\rVert\le \varepsilon$.

\begin{assumption}[stability under misspecification]
Under the misspecified parameters $\Theta' = (\Theta_d + \delta_d, \Theta_o)$ with $\|\delta_d\| \leq \varepsilon$, the closed-loop system remains stable and the state trajectory $(m_t, v_t)$ remains in the domain $\mathbb{R} \times [0, v_{\max}]$ for some $v_{\max} < \infty$.
\end{assumption}

\begin{theorem}[performance gap bound]
Let $J(t,x;\phi;\Theta')$ denote the realized cost when applying the feedback $\phi(\cdot;\Theta)$ in the true model $\Theta'$. Assume Assumptions~\ref{assu:A},~\ref{assu:B},~\ref{assu:C}, and~\ref{assu:D} hold on a compact parameter set $\mathcal K$ containing both $\Theta$ and $\Theta'$, selectors are inactive on $[t,T]$, the Hamiltonian is uniformly strongly convex in controls (and concave in adversarial terms, if present) with modulus $\mu>0$, and the model coefficients are uniformly Lipschitz in $\Theta$ on $[t,T]\times\mathcal A\times\mathcal K$.

Then, there exists $C_{T,\mathcal K}>0$ such that
\[0 \le J\big(t,x;\phi(\cdot;\Theta);\Theta'\big) - V(t,x;\Theta') \le C_{T,\mathcal K} \, (1+\lvert x\rvert^2)\, \varepsilon^2.\]
In particular, the first-order loss vanishes, and the robustness loss is quadratic in the magnitude of drift misspecification.
\end{theorem}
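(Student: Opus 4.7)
The plan is to prove the two inequalities separately. The lower bound $J(t,x;\phi(\cdot;\Theta);\Theta')\ge V(t,x;\Theta')$ is immediate from the definition of $V(t,x;\Theta')$ as the infimum over admissible controls of the worst-case cost under model $\Theta'$: since $\phi(\cdot;\Theta)$ is one admissible feedback, its worst-case realized cost cannot beat the minimum. For the upper bound, I would reduce the question to a pointwise Hamiltonian-suboptimality bound that is quadratic in the feedback perturbation, and then invoke the Lipschitz comparative statics of \cref{sect:lipschitz} to convert feedback perturbations into $\varepsilon$-bounds.

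Concretely, let $\bar V(t,x):=V(t,x;\Theta')$ and $W(t,x):=J(t,x;\phi(\cdot;\Theta);\Theta')$, with feedback $(u_\Theta,\pi_\Theta)$ designed under $\Theta$ and optimal feedback $(u_{\Theta'}^*,\pi_{\Theta'}^*)$ under $\Theta'$. By the verification result in \cref{theo:veri} and the DPP, $\bar V$ solves the HJBI for $\Theta'$, while $W$ solves the linear (in controls) PDE obtained by freezing $(u,\pi)=(u_\Theta,\pi_\Theta)$ and taking only the adversary supremum against $\Theta'$-dynamics. Subtracting these two equations along the trajectory driven by the misspecified feedback yields
\begin{equation*}
-\partial_t(W-\bar V) + \nabla_x(W-\bar V)\cdot b^{\Theta'}_{\phi_\Theta,\theta^*,\xi^*} = -\Delta H(t,x),
\end{equation*}
where $\Delta H(t,x)$ is the Hamiltonian-suboptimality gap: the value of the frozen-controller objective at $(u_\Theta,\pi_\Theta)$ minus the minimized value at $(u_{\Theta'}^*,\pi_{\Theta'}^*)$, with $p=\nabla_x\bar V$.

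The key estimate is that, by the assumed uniform strong convexity with modulus $\mu>0$ in $(u,\pi)$ and matching upper Hessian bound $L$ on the frozen-max objective, the pointwise gap satisfies
\begin{equation*}
0 \le \Delta H(t,x) \le \tfrac{L}{2}\bigl(|u_\Theta-u_{\Theta'}^*|^2 + |\pi_\Theta-\pi_{\Theta'}^*|^2\bigr).
\end{equation*}
This is exactly where the first-order term vanishes: $(u_{\Theta'}^*,\pi_{\Theta'}^*)$ is the interior minimizer, so the linear term in the Taylor expansion is zero and only the quadratic remainder survives. The feedback selectors in \cref{eq:selectors} are smooth linear combinations of the Riccati coefficients $a(t;\Theta)$ and the state; by \Cref{lemm:sensitivity} and the Lipschitz comparative statics of \cref{sect:lipschitz}, applied separately to the drift-block perturbation $\delta_d$, we get $|u_\Theta-u_{\Theta'}^*|+|\pi_\Theta-\pi_{\Theta'}^*|\le C_{T,\mathcal K}(1+|x|)\,\varepsilon$ along closed-loop trajectories (the interior no-switching Assumption~\ref{assu:D}(2) guarantees projection inactivity for small $\varepsilon$). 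Substituting gives $\Delta H(t,x)\le C_{T,\mathcal K}(1+|x|^2)\,\varepsilon^2$.

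Finally, I would close the argument by representing $W-\bar V$ via Feynman--Kac / integration along the closed-loop trajectory $X^{\Theta'}_s$ driven by $(u_\Theta,\pi_\Theta,\theta^*,\xi^*)$, starting from $(t,x)$:
\begin{equation*}
W(t,x)-\bar V(t,x) = \mathbb{E}\!\left[\int_t^T \Delta H(s,X^{\Theta'}_s)\,ds\right],
\end{equation*}
with terminal contribution vanishing since $W(T,\cdot)=\bar V(T,\cdot)=g$. The stability-under-misspecification assumption confines $X^{\Theta'}_s$ to a compact set with $\sup_{s\in[t,T]}\mathbb{E}|X^{\Theta'}_s|^2 \le C_T(1+|x|^2)$ by a standard Gronwall estimate on the linear closed-loop dynamics. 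Combining gives the desired bound $C_{T,\mathcal K}(1+|x|^2)\varepsilon^2$.

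The main obstacle I expect is the clean propagation of the feedback-perturbation estimate through the closed-loop flow: one must bound $|X^{\Theta'}_s - X^{\Theta}_s|$ uniformly on $[t,T]$ under the misspecified drift and then use this to control the state-dependent Lipschitz constant in the feedback gap uniformly in $\varepsilon$. Handling the interplay between the adversarial optimizers $(\theta^*,\xi^*)$ (themselves functions of $\nabla \bar V$ and hence of the trajectory) and the closed-loop state requires a simultaneous Gronwall argument that couples state error, co-state error, and feedback error while remaining strictly inside the interior regime guaranteed by Assumption~\ref{assu:D}(2). Once that coupled estimate is established, the quadratic-in-$\varepsilon$ scaling and the $(1+|x|^2)$ growth factor follow mechanically from the strong-convexity bound on $\Delta H$.
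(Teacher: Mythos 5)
Your proposal is essentially correct, but it takes a genuinely different route from the paper's proof. The paper keeps the \emph{design} value function $V(\cdot;\Theta)$ as the central object: it plugs $V(\cdot;\Theta)$ into the $\Theta'$-HJBI, observes that the residual $r=H_{\Theta'}(x,\nabla V(\cdot;\Theta))-H_{\Theta}(x,\nabla V(\cdot;\Theta))$ is $\mathcal O(\varepsilon)$ purely from Lipschitzness of $H$ in $\Theta$ at a \emph{fixed} gradient, and then invokes the strong-convexity ``Hamiltonian residual squared over $\mu$'' inequality to convert that $\mathcal O(\varepsilon)$ residual into an $\mathcal O(\varepsilon^2)$ running suboptimality; it never needs to compare the two Riccati flows. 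You instead center the argument on the \emph{true} value function $V(\cdot;\Theta')$, bound the pointwise Hamiltonian gap by $\tfrac{L}{2}$ times the squared distance between the implemented feedback and the $\Theta'$-optimal feedback (the Taylor expansion about an interior minimizer, which makes the vanishing of the first-order term fully explicit), and then control that feedback distance by $\mathcal O((1+|x|)\varepsilon)$ via Lemma~\ref{lemm:sensitivity} and the coefficient bound inside the proof in \cref{sect:lipschitz}. What your route buys is transparency about the envelope cancellation and a concrete use of the Riccati sensitivity machinery; what it costs is an extra dependence on that machinery and a more delicate closing step. On that closing step, one caveat: your transport identity for $W-\bar V$ is not literally an equality as written, because the adversary in the $W$-equation responds to $\nabla W$ (maximizer $2\lambda\nabla W$) while in the $\bar V$-equation it responds to $\nabla\bar V$; the difference of the two suprema contributes a term $\lambda(\nabla W+\nabla\bar V)\cdot\nabla(W-\bar V)$, which is linear in $\nabla(W-\bar V)$ with coefficients bounded on the compact trajectory set and can therefore be absorbed into the effective drift of the transport equation. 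You flag essentially this issue in your final paragraph, and in the LQ setting it resolves by comparing the quadratic coefficient ODEs of $W$ and $\bar V$ via Gronwall, so the gap is technical rather than structural.
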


\begin{proof}
Plug $V(\cdot;\Theta)$ into the true HJB/HJBI at $\Theta'$. Since $V$ solves $-\partial_t V + H_{\Theta}(x,\nabla V) = 0$ and the Hamiltonian is Lipschitz in parameters on $[t,T]\times\mathcal A\times\mathcal K$, the pointwise residual
\[r(t,x) := -\partial_t V(t,x;\Theta) + H_{\Theta'}\big(x,\nabla V(t,x;\Theta)\big) = H_{\Theta'}\big(x,\nabla V(t,x;\Theta)\big) - H_{\Theta}\big(x,\nabla V(t,x;\Theta)\big)\]
is uniformly $\mathcal O(\varepsilon)$ on $[t,T]\times\mathcal A$. Moreover, under the interior no-switching assumption (Assumption~\ref{assu:D}(2)), selectors are smooth, so the Lipschitz bound carries through the feedback map without kinks. Uniform strong convexity (in controls) and concavity (in adversarial terms, if present) with modulus $\mu>0$ imply the standard Hamiltonian error-to-policy suboptimality inequality. When a smooth $W$ is used to construct the feedback by minimizing the $\Theta'$-Hamiltonian, the running suboptimality is controlled by the square of the Hamiltonian residual divided by $\mu$ (this follows by completing the square around the $\Theta'$-optimal control). Applying this with $W = V(\cdot;\Theta)$ yields a per-time integrand bounded by $C\, r(t,X_t)^2/\mu$. Since $r = \mathcal O(\varepsilon)$ uniformly, we obtain an $\mathcal O(\varepsilon^2)$ bound on the instantaneous gap.

To pass from integrands to total cost, evaluate along the closed-loop state $X$ driven by the implemented feedback $\phi(\cdot;\Theta)$ under the true model $\Theta'$. Under the standing linear-growth/Lipschitz assumptions (Assumptions~\ref{assu:A},~\ref{assu:B},~\ref{assu:C}, and~\ref{assu:D}), second moments of $X$ are bounded on $[t,T]$ by Gronwall's inequality, yielding $\mathbb E[1+\lvert X_t\rvert^2] \le C\,(1+\lvert x\rvert^2)$. Combining the strong-convexity estimate with $r = \mathcal O(\varepsilon)$, integrating over $[t,T]$, and using the moment bound yields $0 \le J\big(t,x;\phi(\cdot;\Theta);\Theta'\big) - V(t,x;\Theta') \le C_{T,\mathcal K}\, (1+\lvert x\rvert^2)\, \varepsilon^2,$ for a constant $C_{T,\mathcal K}$ depending on $\mu$, Lipschitz and growth constants, and $T$. Intuitively, the first-order term cancels by the envelope principle under interiority (no switching), so the leading error is quadratic in the drift misspecification. In the LQ case, the same $\varepsilon^2$ rate follows directly by completing the square in the closed-loop cost and bounding the perturbation terms.
\end{proof}

\begin{remark}[differentiability and higher-order terms]
If $F$ is $C^2$ and trajectories remain in a compact set, one can expand $a(t;\Theta')$ to second order and refine constants. In time-homogeneous LQ, explicit Riccati solutions yield closed-form sensitivity matrices and sharper constants.
\end{remark}

\section{Derivation of the Riccati ODE system}\label{appe:D}
This appendix provides the full derivation for the Riccati ODE system \cref{eq:Riccati}.

\paragraph{The optimized Hamiltonian.}
The derivation begins with the HJBI equation $-\partial_t V + H(x, \nabla V) = 0.$ As shown in \cref{sect:HJBI}, we can solve the $\inf$-$\sup$ problem for the controls and distortions analytically. The Isaacs Hamiltonian $H(x,p)$ where $x=(m,v)$ and $p=(p_m, p_v) = (\partial_m V, \partial_v V)$ is:
\begin{align*}
H(x,p) = \inf_{u,\pi} \sup_{\theta,\xi} \Big\{ & p_m(\eta u+\theta) + p_v\big(-2\beta v + \Sigma^2+\xi-\chi\pi\big) \\
& + w_1 m^2 + (\bar{w}_2 + \kappa u) v + R\pi^2 + R_u u^2 - \tfrac{\theta^2}{4\lambda_m} - \tfrac{\xi^2}{4\lambda_v} \Big\}.
\end{align*}
Solving the unconstrained optimization problems (by completing the square or first-order conditions) yields the optimized Hamiltonian $H^*$:
\begin{align*}
H^*(x, p) = & \lambda_m p_m^2 + \lambda_v p_v^2 + w_1 m^2 + \bar{w}_2 v + p_v(-2\beta v + \Sigma^2) - \frac{(\eta p_m + \kappa v)^2}{4R_u} - \frac{\chi^2 p_v^2}{4R}.
\end{align*}

\paragraph{The ansatz and gradients.}
We use the quadratic ansatz from \cref{eq:ansatz}: $V(t,m,v) = a_0 + a_1 m + a_2 v + a_{11} m^2 + a_{12} mv + a_{22} v^2,$ where $a_i = a_i(t)$. The required derivatives are:
\begin{align*}
\partial_t V &= \dot{a}_0 + \dot{a}_1 m + \dot{a}_2 v + \dot{a}_{11} m^2 + \dot{a}_{12} mv + \dot{a}_{22} v^2, \\
p_m = \partial_m V &= a_1 + 2a_{11} m + a_{12} v, \\
p_v = \partial_v V &= a_2 + a_{12} m + 2a_{22} v.
\end{align*}

\paragraph{Matching coefficients.}
We substitute the ansatz and its gradients into the HJBI equation $-\partial_t V + H^*(x, \nabla V) = 0$. This creates a large polynomial in $m$ and $v$. For the equation to hold for all $(m,v)$, the coefficients of each monomial ($m^2, v^2, mv, m, v, 1$) must be equal. We equate the coefficients from $-\partial_t V$ (\emph{e.g.,} $-\dot{a}_{11}$) with the corresponding coefficients from $H^*$. This is equivalent to setting $\dot{a}_i$ equal to the collected coefficients from $H^*$.

\paragraph{Coefficient of constant terms (yields $\dot{a}_{0}$).} For the terms from $H^*$ containing constants, we have: $p_v(\Sigma^2) \implies (a_2)\Sigma^2 = \Sigma^2 a_2$, \quad $\lambda_m p_m^2 \implies \lambda_m (a_1)^2 \implies \lambda_m a_1^2$ $\lambda_v p_v^2 \implies \lambda_v (a_2)^2 \implies \lambda_v a_2^2$, \quad $-\tfrac{(\eta p_m)^2}{4R_u} \implies -\tfrac{1}{4R_u} (\eta a_1)^2 = -\tfrac{\eta^2}{4R_u} a_1^2$, and $-\tfrac{\chi^2 p_v^2}{4R} \implies -\tfrac{\chi^2}{4R} (a_2)^2 = -\tfrac{\chi^2}{4R} a_2^2$.

Therefore, $\dot{a}_{0} = \Sigma^2 a_2 + \Big(\lambda_m - \tfrac{\eta^2}{4R_u}\Big) a_1^2 + \Big(\lambda_v - \tfrac{\chi^2}{4R}\Big) a_2^2$.

\paragraph{Coefficient of $m$ (yields $\dot{a}_{1}$).} For the terms from $H^*$ containing $m$, we have: $p_v(\Sigma^2) \implies (a_{12} m)\Sigma^2 \implies \Sigma^2 a_{12}$, \quad $\lambda_m p_m^2 \implies \lambda_m \cdot 2 \cdot (a_1)(2a_{11} m) \implies 4\lambda_m a_1 a_{11}$, \quad $\lambda_v p_v^2 \implies \lambda_v \cdot 2 \cdot (a_2)(a_{12} m) \implies 2\lambda_v a_2 a_{12}$, \quad $-\tfrac{(\eta p_m)^2}{4R_u} \implies -\tfrac{1}{4R_u} \cdot 2 \cdot (\eta a_1)(\eta 2a_{11} m) \implies -\tfrac{\eta^2}{R_u} a_1 a_{11}$, and $-\tfrac{\chi^2 p_v^2}{4R} \implies -\tfrac{\chi^2}{4R} \cdot 2 \cdot (a_2)(a_{12} m) \implies -\tfrac{\chi^2}{2R} a_2 a_{12}$.

Therefore, $\dot{a}_{1} = \Sigma^2 a_{12} + \Big(4\lambda_m - \tfrac{\eta^2}{R_u}\Big) a_1 a_{11} + \Big(2\lambda_v - \tfrac{\chi^2}{2R}\Big) a_2 a_{12}$.

\paragraph{Coefficient of $v$ (yields $\dot{a}_{2}$).} For the terms from $H^*$ containing $v$, we have: $\bar{w}_2 v \implies \bar{w}_2$, \quad $p_v(-2\beta v) \implies (a_2)(-2\beta v) \implies -2\beta a_2$, \quad $p_v(\Sigma^2) \implies (2a_{22} v)\Sigma^2 \implies 2\Sigma^2 a_{22}$, \quad $\lambda_m p_m^2 \implies \lambda_m \cdot 2 \cdot (a_1)(a_{12} v) \implies 2\lambda_m a_1 a_{12}$, \quad $\lambda_v p_v^2 \implies \lambda_v \cdot 2 \cdot (a_2)(2a_{22} v) \implies 4\lambda_v a_2 a_{22}$, \quad $-\tfrac{(\eta p_m + \kappa v)^2}{4R_u} \implies -\tfrac{1}{4R_u} [ 2(\eta a_1)(\eta a_{12} v + \kappa v) ] \implies -\tfrac{\eta^2}{2R_u} a_1 a_{12} - \tfrac{\eta\kappa}{2R_u} a_1$, and $-\tfrac{\chi^2 p_v^2}{4R} \implies -\tfrac{\chi^2}{4R} \cdot 2 \cdot (a_2)(2a_{22} v) \implies -\tfrac{\chi^2}{R} a_2 a_{22}$.

Therefore, $\dot{a}_{2} = \bar{w}_2 - 2\beta a_2 + 2\Sigma^2 a_{22} + \Big(2\lambda_m - \tfrac{\eta^2}{2R_u}\Big) a_1 a_{12} + \Big(4\lambda_v - \tfrac{\chi^2}{R}\Big) a_2 a_{22} - \tfrac{\eta\kappa}{2R_u} a_1$.

\paragraph{Coefficient of $m^2$ (yields $\dot{a}_{11}$).} For the terms from $H^*$ containing $m^2$, we have: $w_1 m^2 \implies w_1$, \quad $\lambda_m p_m^2 \implies \lambda_m (2a_{11} m)^2 \implies 4\lambda_m a_{11}^2$, \quad $\lambda_v p_v^2 \implies \lambda_v (a_{12} m)^2 \implies \lambda_v a_{12}^2$, \quad $-\tfrac{(\eta p_m)^2}{4R_u} \implies -\tfrac{1}{4R_u} (\eta (2a_{11} m))^2 \implies -\tfrac{\eta^2}{R_u} a_{11}^2$, and $-\tfrac{\chi^2 p_v^2}{4R} \implies -\tfrac{\chi^2}{4R} (a_{12} m)^2 \implies -\tfrac{\chi^2}{4R} a_{12}^2$.

Therefore, $\dot{a}_{11} = w_1 + \Big(4\lambda_m - \tfrac{\eta^2}{R_u}\Big) a_{11}^2 + \Big(\lambda_v - \tfrac{\chi^2}{4R}\Big) a_{12}^2$.

\paragraph{Coefficient of $mv$ (yields $\dot{a}_{12}$).} For the terms from $H^*$ containing $mv$, we have: $p_v(-2\beta v) \implies (a_{12}m)(-2\beta v) \implies -2\beta a_{12}$, \quad $\lambda_m p_m^2 \implies \lambda_m \cdot 2 \cdot (2a_{11} m)(a_{12} v) \implies 4\lambda_m a_{11} a_{12}$, \quad $\lambda_v p_v^2 \implies \lambda_v \cdot 2 \cdot (a_{12} m)(2a_{22} v) \implies 4\lambda_v a_{12} a_{22}$, \quad $-\tfrac{(\eta p_m + \kappa v)^2}{4R_u} \implies -\tfrac{1}{4R_u} \cdot 2 \cdot (\eta 2a_{11} m)(\eta a_{12} v + \kappa v) \implies -\tfrac{\eta^2}{R_u} a_{11} a_{12} - \tfrac{\eta\kappa}{R_u} a_{11}$, and $-\tfrac{\chi^2 p_v^2}{4R} \implies -\tfrac{\chi^2}{4R} \cdot 2 \cdot (a_{12} m)(2a_{22} v) \implies -\tfrac{\chi^2}{R} a_{12} a_{22}$.

Therefore, $\dot{a}_{12} = -2\beta a_{12} - \tfrac{\eta\kappa}{R_u} a_{11} + \Big(4\lambda_m - \tfrac{\eta^2}{R_u}\Big) a_{11} a_{12} + \Big(4\lambda_v - \tfrac{\chi^2}{R}\Big) a_{12} a_{22}$.

\paragraph{Coefficient of $v^2$ (yields $\dot{a}_{22}$).} For the terms from $H^*$ containing $v^2$, we have: $p_v(-2\beta v) \implies (2a_{22}v)(-2\beta v) \implies -4\beta a_{22}$, \quad $\lambda_m p_m^2 \implies \lambda_m (a_{12} v)^2 \implies \lambda_m a_{12}^2$, \quad $\lambda_v p_v^2 \implies \lambda_v (2a_{22} v)^2 \implies 4\lambda_v a_{22}^2$, \quad $-\tfrac{(\eta p_m + \kappa v)^2}{4R_u} \implies -\tfrac{1}{4R_u}(\eta a_{12} v + \kappa v)^2 \implies -\tfrac{1}{4R_u}(\eta^2 a_{12}^2 + 2\eta\kappa a_{12} + \kappa^2)$, and $-\tfrac{\chi^2 p_v^2}{4R} \implies -\tfrac{\chi^2}{4R} (2a_{22} v)^2 \implies -\tfrac{\chi^2}{R} a_{22}^2$.

Therefore, $\dot{a}_{22} = -4\beta a_{22} - \tfrac{\kappa^2}{4R_u} - \tfrac{\eta\kappa}{2R_u} a_{12} + \Big(\lambda_m - \tfrac{\eta^2}{4R_u}\Big) a_{12}^2 + \Big(4\lambda_v - \tfrac{\chi^2}{R}\Big) a_{22}^2$.

\paragraph{The final system.}
Collecting the six results above yields the complete Riccati ODE system in \cref{eq:Riccati}, which is solved backward from the terminal conditions $a_i(T)$ given in \cref{eq:app_terminal}.

\addtocontents{toc}{\protect\setcounter{tocdepth}{2}}
\end{appendices}

\paragraph{Acknowledgments} The author is grateful to Ivana Alexandrova and to the late Thomas B. Woolf for their helpful comments. A preliminary abstract of this paper has been accepted for presentation at the 2nd Dolomites Winter School on Mean-Field Systems in Finance, Neurosciences, and AI (January 2026), and the author thanks the organizers for the opportunity.

\bibliographystyle{plainnat}
\bibliography{main}

\begin{thebibliography}{52}
\providecommand{\natexlab}[1]{#1}
\providecommand{\url}[1]{\texttt{#1}}
\expandafter\ifx\csname urlstyle\endcsname\relax
  \providecommand{\doi}[1]{doi: #1}\else
  \providecommand{\doi}{doi: \begingroup \urlstyle{rm}\Url}\fi

\bibitem[Amini et~al.(2016)Amini, Cont, and Minca]{AminiContMinca2016}
H.~Amini, R.~Cont, and A.~Minca.
\newblock Resilience to contagion in financial networks.
\newblock \emph{Mathematical Finance}, 26:\penalty0 329--365, 2016.
\newblock \doi{10.1111/mafi.12051}.

\bibitem[Amini et~al.(2020)Amini, Filipovi{\'c}, and Minca]{AminiFilipovicMinca2020}
H.~Amini, D.~Filipovi{\'c}, and A.~Minca.
\newblock Systemic risk in networks with a central node.
\newblock \emph{SIAM Journal on Financial Mathematics}, 11:\penalty0 60--98, 2020.
\newblock \doi{10.1137/18M1184667}.

\bibitem[Barles and Souganidis(1991)]{BarlesSouganidis1991}
G.~Barles and P.~E. Souganidis.
\newblock Convergence of approximation schemes for fully nonlinear second order equations.
\newblock \emph{Asymptotic Analysis}, 4:\penalty0 271--283, 1991.
\newblock \doi{10.3233/ASY-1991-4305}.

\bibitem[Basei and Pham(2019)]{BaseiPham2019}
M.~Basei and H.~Pham.
\newblock A weak martingale approach to linear-quadratic {McKean--Vlasov} stochastic control problems.
\newblock \emph{Journal of Optimization Theory and Applications}, 181:\penalty0 347--382, 2019.
\newblock \doi{10.1007/s10957-018-01453-z}.

\bibitem[Bauschke and Combettes(2017)]{BauschkeCombettes2017}
H.~H. Bauschke and P.~L. Combettes.
\newblock \emph{Convex Analysis and Monotone Operator Theory in Hilbert Spaces}.
\newblock Springer, 2nd edition, 2017.
\newblock \doi{10.1007/978-3-319-48311-5}.

\bibitem[Bayraktar et~al.(2025)Bayraktar, Guo, Tang, and Zhang]{Bayraktaretal2025}
E.~Bayraktar, G.~Guo, W.~Tang, and Y.~P. Zhang.
\newblock Systemic robustness: A mean-field particle system approach.
\newblock \emph{Mathematical Finance}, 35:\penalty0 727--744, 2025.
\newblock \doi{10.1111/mafi.12459}.

\bibitem[Bensoussan(2018)]{Bensoussan2018}
A.~Bensoussan.
\newblock \emph{Estimation and Control of Dynamical Systems}.
\newblock Springer, 2018.
\newblock \doi{10.1007/978-3-319-75456-7}.

\bibitem[Bensoussan et~al.(2013)Bensoussan, Frehse, and Yam]{BensoussanFrehseYam2013}
A.~Bensoussan, J.~Frehse, and P.~Yam.
\newblock \emph{Mean Field Games and Mean Field Type Control Theory}.
\newblock Springer, 2013.
\newblock \doi{10.1007/978-1-4614-8508-7}.

\bibitem[Bensoussan et~al.(2016)Bensoussan, Chau, and Yam]{Bensoussan2016}
A.~Bensoussan, M.~H.~M. Chau, and S.~C.~P. Yam.
\newblock Mean field games with a dominating player.
\newblock \emph{Applied Mathematics and Optimization}, 74:\penalty0 91--128, 2016.
\newblock \doi{10.1007/s00245-015-9309-1}.

\bibitem[Berge(1997)]{Berge1997}
C.~Berge.
\newblock \emph{Topological Spaces: Including a Treatment of Multi-Valued Functions, Vector Spaces and Convexity}.
\newblock Dover Publications, 1997.
\newblock Reprint of the 1963 English translation by E. M. Patterson.

\bibitem[Bo and Capponi(2015)]{BoCapponi2015}
L.~Bo and A.~Capponi.
\newblock Systemic risk in interbanking networks.
\newblock \emph{SIAM Journal on Financial Mathematics}, 6:\penalty0 386--424, 2015.
\newblock \doi{10.1137/130937664}.

\bibitem[Carmona(2016)]{Carmona2016}
R.~Carmona.
\newblock \emph{Lectures on BSDEs, Stochastic Control, and Stochastic Differential Games with Financial Applications}.
\newblock SIAM, 2016.
\newblock \doi{10.1137/1.9781611974249}.

\bibitem[Carmona and Delarue(2013)]{CarmonaDelarue2013}
R.~Carmona and F.~Delarue.
\newblock Probabilistic analysis of mean-field games.
\newblock \emph{SIAM Journal on Control and Optimization}, 51:\penalty0 2705--2734, 2013.
\newblock \doi{10.1137/120883499}.

\bibitem[Carmona and Delarue(2018{\natexlab{a}})]{CarmonaDelarue2018I}
R.~Carmona and F.~Delarue.
\newblock \emph{Probabilistic Theory of Mean Field Games with Applications I: Mean Field FBSDEs, Control, and Games}.
\newblock Springer, 2018{\natexlab{a}}.
\newblock \doi{10.1007/978-3-319-58920-6}.

\bibitem[Carmona and Delarue(2018{\natexlab{b}})]{CarmonaDelarue2018II}
R.~Carmona and F.~Delarue.
\newblock \emph{Probabilistic Theory of Mean Field Games with Applications II: Mean Field Games with Common Noise and Master Equations}.
\newblock Springer, 2018{\natexlab{b}}.
\newblock \doi{10.1007/978-3-319-56436-4}.

\bibitem[Carmona et~al.(2015)Carmona, Fouque, and Sun]{CarmonaFouqueSun2015}
R.~Carmona, J.~Fouque, and L.~Sun.
\newblock Mean field games and systemic risk.
\newblock \emph{Communications in Mathematical Sciences}, 13:\penalty0 911--933, 2015.
\newblock \doi{10.4310/CMS.2015.v13.n4.a4}.

\bibitem[Cont and Hu(2025)]{ContHu2025}
R.~Cont and A.~Hu.
\newblock Homogenization and mean-field approximation for multi-player games, 2025.
\newblock preprint, arXiv:2502.12389v1 [math.OC].

\bibitem[Cont et~al.(2013)Cont, Moussa, and Santos]{ContMoussaSantos2013}
R.~Cont, A.~Moussa, and E.~B. Santos.
\newblock Network structure and systemic risk in banking systems.
\newblock In J.~Fouque and J.~A. Langsam, editors, \emph{Handbook on Systemic Risk}, pages 327--368. Cambridge University Press, 2013.
\newblock \doi{doi.org/10.1017/CBO9781139151184.018}.

\bibitem[Cont et~al.(2021)Cont, Guo, and Xu]{ContGuoXu2021}
R.~Cont, X.~Guo, and R.~Xu.
\newblock Interbank lending with benchmark rates: Pareto optima for a class of singular control games.
\newblock \emph{Mathematical Finance}, 31:\penalty0 1357--1393, 2021.
\newblock \doi{10.1111/mafi.12325}.

\bibitem[Crandall et~al.(1992)Crandall, Ishii, and Lions]{CrandallIshiiLions1992}
M.~G. Crandall, H.~Ishii, and P.-L. Lions.
\newblock User's guide to viscosity solutions of second order partial differential equations.
\newblock \emph{Bulletin of the American Mathematical Society}, 27:\penalty0 1--67, 1992.
\newblock \doi{10.1090/S0273-0979-1992-00266-5}.

\bibitem[Cuchiero et~al.(2024)Cuchiero, Reisinger, and Rigger]{CuchieroReisingerRigger2024}
C.~Cuchiero, C.~Reisinger, and S.~Rigger.
\newblock Optimal bailout strategies resulting from the drift controlled supercooled stefan problem.
\newblock \emph{Annals of Operations Research}, 336:\penalty0 1315--1349, 2024.
\newblock \doi{10.1007/s10479-023-05293-7}.

\bibitem[de~Crescenzo et~al.(2025)de~Crescenzo, de~Feo, and Pham]{DeCrescenzoDeFeoPham2025}
A.~de~Crescenzo, F.~de~Feo, and H.~Pham.
\newblock Linear-quadratic optimal control for non-exchangeable mean-field {SDEs} and applications to systemic risk, 2025.
\newblock preprint, arXiv.2503.03318v1 [math.OC].

\bibitem[Feinstein and S{\o}jmark(2021)]{FeinsteinSojmark2021}
Z.~Feinstein and A.~S{\o}jmark.
\newblock Dynamic default contagion in heterogeneous interbank systems.
\newblock \emph{SIAM Journal on Financial Mathematics}, 12:\penalty0 SC83--SC97, 2021.
\newblock \doi{10.1137/20M1376765}.

\bibitem[Fleming and Soner(2006)]{FlemingSoner2006}
W.~H. Fleming and H.~M. Soner.
\newblock \emph{Controlled Markov Processes and Viscosity Solutions}.
\newblock Springer, 2nd edition, 2006.
\newblock \doi{10.1007/0-387-31071-1}.

\bibitem[Fouque and Ichiba(2013)]{FouqueIchiba2013}
J.~Fouque and T.~Ichiba.
\newblock Stability in a model of interbank lending.
\newblock \emph{SIAM Journal on Financial Mathematics}, 4:\penalty0 784--803, 2013.
\newblock \doi{10.1137/110841096}.

\bibitem[Freixas et~al.(2000)Freixas, Parigi, and Rochet]{Freixas2000}
X.~Freixas, B.~M. Parigi, and J.-C. Rochet.
\newblock Systemic risk, interbank relations, and liquidity provision by the central bank.
\newblock \emph{Journal of Money, Credit, and Banking}, 32:\penalty0 611--638, 2000.
\newblock \doi{10.2307/2601198}.

\bibitem[Gai and Kapadia(2010)]{Gai2010}
P.~Gai and S.~Kapadia.
\newblock Contagion in financial networks.
\newblock \emph{Proceedings of the Royal Society A: Mathematical, Physical and Engineering Sciences}, 466:\penalty0 2401--2423, 2010.
\newblock \doi{10.1098/rspa.2009.0410}.

\bibitem[Giegrich et~al.(2024)Giegrich, Reisinger, and Zhang]{GiegrichReisingerZhang2024}
M.~Giegrich, C.~Reisinger, and Y.~Zhang.
\newblock Convergence of policy gradient methods for finite-horizon exploratory linear-quadratic control problems.
\newblock \emph{SIAM Journal on Control and Optimization}, 62:\penalty0 1060--1092, 2024.
\newblock \doi{10.1137/22M1533517}.

\bibitem[Hambly and S{\o}jmark(2019)]{HamblySojmark2019}
B.~Hambly and A.~S{\o}jmark.
\newblock An {SPDE} model for systemic risk with endogenous contagion.
\newblock \emph{Finance and Stochastics}, 23:\penalty0 535--594, 2019.
\newblock \doi{10.1007/s00780-019-00396-1}.

\bibitem[Hambly et~al.(2021)Hambly, Xu, and Yang]{HamblyXuYang2021}
B.~Hambly, R.~Xu, and H.~Yang.
\newblock Policy gradient methods for the noisy linear quadratic regulator over a finite horizon.
\newblock \emph{SIAM Journal on Control and Optimization}, 59:\penalty0 3359--3391, 2021.
\newblock \doi{10.1137/20M1382386}.

\bibitem[Hansen and Sargent(2008)]{HansenSargent2008}
L.~P. Hansen and T.~J. Sargent.
\newblock \emph{Robustness}.
\newblock Princeton University Press, 2008.

\bibitem[Hartman(2002)]{Hartman2002}
P.~Hartman.
\newblock \emph{Ordinary Differential Equations}.
\newblock SIAM, 2nd edition, 2002.
\newblock \doi{10.1137/1.9780898719222}.

\bibitem[Huang(2010)]{Huang2010}
M.~Huang.
\newblock Large-population {LQG} games involving a major player: The {Nash} certainty equivalence principle.
\newblock \emph{SIAM Journal on Control and Optimization}, 48:\penalty0 3318--3353, 2010.
\newblock \doi{10.1137/080735370}.

\bibitem[Huang et~al.(2006)Huang, Malham\'e, and Caines]{Huang2006}
M.~Huang, R.~P. Malham\'e, and P.~E. Caines.
\newblock Large population stochastic dynamic games: Closed-loop {McKean-Vlasov} systems and the {Nash} certainty equivalence principle.
\newblock \emph{Communications in Information and Systems}, 6:\penalty0 221--252, 2006.
\newblock \doi{10.4310/CIS.2006.v6.n3.a5}.

\bibitem[Ishii and Lions(1990)]{IshiiLions1990}
H.~Ishii and P.-L. Lions.
\newblock Viscosity solutions of fully nonlinear second-order elliptic partial differential equations.
\newblock \emph{Journal of Differential Equations}, 83:\penalty0 26--78, 1990.
\newblock \doi{10.1016/0022-0396(90)90068-Z}.

\bibitem[Karatzas and Shreve(1998)]{Karatzas}
I.~Karatzas and S.~E. Shreve.
\newblock \emph{Brownian Motion and Stochastic Calculus}.
\newblock Springer, 2nd edition, 1998.
\newblock \doi{10.1007/978-1-4612-0949-2}.

\bibitem[Kullback and Leibler(1951)]{KullbackLeibler1951}
S.~Kullback and R.~Leibler.
\newblock On information and sufficiency.
\newblock \emph{Annals of Mathematical Statistics}, 22:\penalty0 79--86, 1951.
\newblock \doi{10.1214/aoms/1177729694}.

\bibitem[Lasry and Lions(2007)]{LasryLions2007}
J.-M. Lasry and P.-L. Lions.
\newblock Mean field games.
\newblock \emph{Japanese Journal of Mathematics}, 2:\penalty0 229--260, 2007.
\newblock \doi{10.1007/s11537-007-0657-8}.

\bibitem[Lions(1983)]{Lions1983}
P.~L. Lions.
\newblock Optimal control of diffusion processes and {Hamilton--Jacobi--Bellman} equations part 2: viscosity solutions and uniqueness.
\newblock \emph{Communications in Partial Differential Equations}, 8:\penalty0 1229--1276, 1983.
\newblock \doi{10.1080/03605308308820301}.

\bibitem[Minca and Sulem(2014)]{MincaSulem2014}
A.~Minca and A.~Sulem.
\newblock Optimal control of interbank contagion under complete information.
\newblock \emph{Statistics and Risk Modeling}, 31:\penalty0 23--48, 2014.
\newblock \doi{10.1515/strm-2013-1165}.

\bibitem[Petersen et~al.(2000)Petersen, James, and Dupuis]{Petersen2000}
I.~R. Petersen, M.~R. James, and P.~Dupuis.
\newblock Minimax optimal control of stochastic uncertain systems with relative entropy constraints.
\newblock \emph{IEEE Transactions on Automatic Control}, 45:\penalty0 398--412, 2000.
\newblock \doi{10.1109/9.847720}.

\bibitem[Pham(2009)]{Pham2009}
H.~Pham.
\newblock \emph{Continuous-time Stochastic Control and Optimization with Financial Applications}.
\newblock Springer, 2009.
\newblock \doi{10.1007/978-3-540-89500-8}.

\bibitem[Reisinger et~al.(2024)Reisinger, Stockinger, and Zhang]{ReisingerStockingerZhang2024}
C.~Reisinger, W.~Stockinger, and Y.~Zhang.
\newblock A fast iterative {PDE}-based algorithm for feedback controls of nonsmooth mean-field control problems.
\newblock \emph{SIAM Journal on Scientific Computing}, 46:\penalty0 A2737--A2773, 2024.
\newblock \doi{10.1137/21M1441158}.

\bibitem[Royden and Fitzpatrick(2010)]{RoydenFitzpatrick2010}
H.~L. Royden and P.~M. Fitzpatrick.
\newblock \emph{Real Analysis}.
\newblock Pearson, 4th edition, 2010.

\bibitem[Sion(1958)]{sion1958}
M.~Sion.
\newblock On general minimax theorems.
\newblock \emph{Pacific Journal of Mathematics}, 8:\penalty0 171--176, 1958.

\bibitem[Soner(1986)]{Soner1986II}
H.~M. Soner.
\newblock Optimal control with state-space constraint {II}.
\newblock \emph{SIAM Journal on Control and Optimization}, 24:\penalty0 1110--1122, 1986.
\newblock \doi{10.1137/0324067}.

\bibitem[Sun(2018)]{Sun2018}
L.~Sun.
\newblock Systemic risk and interbank lending.
\newblock \emph{Journal of Optimization Theory and Applications}, 179:\penalty0 400--424, 2018.
\newblock \doi{10.1007/s10957-017-1185-1}.

\bibitem[Veraart and Aldasoro(2025)]{VeraartAldasoro2025}
L.~A.~M. Veraart and I.~Aldasoro.
\newblock Systemic risk in markets with multiple central counterparties.
\newblock \emph{Mathematical Finance}, 35:\penalty0 214--262, 2025.
\newblock \doi{10.1111/mafi.12446}.

\bibitem[Yong(2013)]{Yong2013}
J.~Yong.
\newblock Linear-quadratic optimal control problems for mean-field stochastic differential equations.
\newblock \emph{SIAM Journal on Control and Optimization}, 51:\penalty0 2809--2838, 2013.
\newblock \doi{10.1137/120892477}.

\bibitem[Yong and Zhou(1999)]{YongZhou1999}
J.~Yong and X.~Y. Zhou.
\newblock \emph{Stochastic Controls: Hamiltonian Systems and HJB Equations}.
\newblock Springer, 1999.
\newblock \doi{10.1007/978-1-4612-1466-3}.

\bibitem[Øksendal(2003)]{OksendalSDE}
B.~Øksendal.
\newblock \emph{Stochastic Differential Equations: An Introduction with Applications}.
\newblock Springer, 6th edition, 2003.
\newblock \doi{10.1007/978-3-642-14394-6}.

\bibitem[Øksendal and Sulem(2019)]{OS-SCJD}
B.~Øksendal and A.~Sulem.
\newblock \emph{Applied Stochastic Control of Jump Diffusions}.
\newblock Springer, 3rd edition, 2019.
\newblock \doi{10.1007/978-3-030-02781-0}.

\end{thebibliography}

\end{document}